\documentclass[final,onefignum,onetabnum]{siamltex1213}
\usepackage{amssymb,amsmath}
\newtheorem{remark}[theorem]{Remark}
\newtheorem{example}[theorem]{Example}


\title{Perturbations of time optimal control problems\\ for a class of abstract parabolic systems\thanks{We acknowledge the
support of the Sino-French International Associated Laboratory for Applied Mathematics (LIASFMA) which allowed the three months visit of the second author in Institut \'Elie Cartan of Lorraine. The second author was  partially supported by the National Basis Research Program of China under grant 2011CB808002 and the NNSF of China under grant 11171264.
}}

\author{Marius Tucsnak\thanks{Universit\'e de Lorraine, Institut \'Elie Cartan de Lorraine, UMR 7502, 54506 Vandoeuvre-l\`es-Nancy Cedex, France
(\email{marius.tucsnak@univ-lorraine.fr}, \email{chi-ting.wu@univ-lorraine.fr})} \and Gensgsheng Wang\thanks{Department of Mathematics and Statistics, Wuhan University, Wuhan,
China (\email{wanggs62@yeah.net})} \and Chi-Ting Wu\footnotemark[2] }

\begin{document}
\maketitle
\slugger{mms}{sicon}{xx}{x}{x--x}

\begin{abstract}
In this work we study the asymptotic behavior of the solutions of a class of abstract parabolic time optimal control problems
when the generators converge, in an appropriate sense, to a given strictly negative operator.
Our main application to PDEs systems concerns
the behavior of optimal time and of the associated optimal controls for parabolic equations with highly oscillating coefficients,
as we encounter in homogenization theory. Our main results assert that, provided that the target
is a closed ball centered at the origin and of positive radius, the solutions of the time optimal
control problems for the systems with oscillating coefficients converge, in the usual norms,
to the solution of the corresponding problem for the homogenized system. In order to prove our main theorem,
we provide several new results, which could be of a broader interest, on time and norm optimal control problems.
\end{abstract}

\begin{keywords}
time optimal control, norm optimal control, abstract parabolic equation with perturbed parameter,
heat equation with oscillating coefficient, homogenization
\end{keywords}

\begin{AMS}93C20; 93C25;  93C73; 35K90 ;  35K10. \end{AMS}

\pagestyle{myheadings}
\thispagestyle{plain}
\markboth{Tucsnak, Wang and Wu}{Perturbations of time optimal control problems}

\section{Introduction}\label{sec_intro}
\setcounter{equation}{0}

\

The aim of this paper is to study the behavior of time optimal controls for a family of systems governed
by abstract parabolic equations in a Hilbert space. The systems depend on a small parameter
$\varepsilon$ and they converge, in an appropriate sense, to a given abstract parabolic linear system when $\varepsilon \to 0$.
Our motivation comes from the need of understanding the behavior of time optimal controls for
 parabolic equations with rapidly oscillating coefficients, as studied in homogenization theory.
 To state  this motivating problem, we consider an open bounded set $\Omega\subset\mathbb{R}^n$
 with a $C^2$ boundary $\partial\Omega$ and  an open nonempty subset $\omega\subset \Omega$,
 with its characteristic function
 $\chi_\omega$.
Let $a=(a_{ij})_{1\leqslant i, j\leqslant n}\in W^{2,\infty}(\mathbb{R}^n; M_n(\mathbb{R}))$
 be a symmetric  matrix valued function of period 1 in each variable $x_j$, with $j\in \{1,\dots,n\}$
 such that, for some constants $m_1, m_2>0$, we have
$$
m_1 \|\xi\|^2 \leqslant a(x)\xi\cdot \xi\leqslant m_2 \|\xi\|^2 \qquad(\xi\in \mathbb{R}^n,\ x\in\mathbb{R}^n\ {\rm a.e.}).
$$
Given $u\in L^\infty((0,\infty),L^2(\Omega))$, $\psi\in L^2(\Omega)$ and $\varepsilon>0$, we
consider the following initial and boundary value problem:
\begin{equation}\label{heat_osc}
\dot z_\varepsilon(x,t)-{\rm div}\, \left(a\left(\frac{x}{\varepsilon}\right)\nabla z_\varepsilon(x,t)\right)
= \chi_\omega(x) u(x,t) \qquad(x\in \Omega,\ \ t\geqslant 0),
\end{equation}
\vspace{-0.75cm}
\begin{equation}\label{bound_osc}
z_\varepsilon(x,t)=0 \qquad(x\in\partial\Omega,\ \ t\geqslant 0),
\end{equation}
\vspace{-0.75cm}
\begin{equation}\label{init_osc}
z_\varepsilon(x,0)=\psi(x) \qquad(x\in \Omega).
\end{equation}
We treat the solution of the above system as a function from $[0,\infty)$ to $L^2(\Omega)$, and denote it by
$z_\varepsilon(\cdot; \psi, u)$.
It is well known (see, for instance, Bensoussan {\it et al} \cite{BLP}) that for given $u\in L^\infty((0,T);L^2(\Omega))$
and $\psi\in L^2(\Omega)$ the solution $z_\varepsilon(\cdot; \psi, u)$ of \eqref{heat_osc}-\eqref{init_osc}
converges, in a sense which we will make precise later, to the solution $z(\cdot; \psi, u)$
of:
\begin{equation}\label{heat_osc_non}
\dot z (x,t)-{\rm div}\, \left(a_0 \nabla z(x,t)\right)= \chi_\omega(x) u(x,t) \qquad(x\in \Omega,\ \ t\geqslant 0),
\end{equation}
\vspace{-0.75cm}
\begin{equation}\label{bound_osc_non}
z(x,t)=0 \qquad(x\in\partial\Omega,\ \ t\geqslant 0),
\end{equation}
\vspace{-0.75cm}
\begin{equation}\label{init_osc_non}
z(x,0)=\psi(x) \qquad(x\in \Omega),
\end{equation}
where $a_0\in M_n(\mathbb{R})$ is a positive definite matrix, called the {\em homogenization matrix}, which will be precisely defined in Section \ref{sec_final}.

To state our motivating problem more precisely, we fix $r>0$ and $\psi\in L^2(\Omega)$.  Consider, for each
 $\varepsilon>0$ and $ M>0$, the following time optimal control problems associated to
 systems \eqref{heat_osc}-\eqref{init_osc} and \eqref{heat_osc_non}-\eqref{init_osc_non}, respectively:

\

\noindent $(TPH)^M_\varepsilon\;\;\;\;
\tau^*_{\varepsilon}(M):= \displaystyle\min \left\lbrace t \ |\
\|z_\varepsilon(t; \psi, u)\|_{L^2(\Omega)}\leqslant r\right\rbrace $,
 where the minimum  is taken over all $u$ from the closed ball in $L^\infty((0,\infty); L^2(\Omega))$,
 centered at $0$ and of  radius $M$;

\

\noindent ${(TPH)_0^M}\;\;\;\;
\tau_0^*(M):= \displaystyle\min \left\lbrace t \ |\ \|z(t; \psi,u)\|_{L^2(\Omega)}\leqslant r\right\rbrace$,
  where the minimum is taken over all $u$ from the closed ball in $L^\infty((0,\infty); L^2(\Omega))$,
   centered at $0$ and of  radius $M$.

 \

\noindent For fixed $\varepsilon>0$, time optimal control problems similar to $(TPH)^M_\varepsilon$ have been extensively studied in the literature, see, for instance, Tr{\"o}ltzsch \cite{Fredi}, Arada and Raymond \cite{Ar_Ray} or  Kunisch and Wang \cite{Kun_Wang}. It has been proved (see, for instance, Phung, Wang and Zhang \cite{PhWZh})  that $(TPH)^M_\varepsilon$ admits, for each $\varepsilon\geqslant 0,\ M>0$, a unique solution $(\tau_\varepsilon^*(M),u_\varepsilon)$. We refer to Barbu \cite{b_book},
 Fattorini \cite{fattorini_old_book,Fatto_book} and Li and Yong \cite{liyong} for detailed presentations of time optimal control problems for infinite-dimensional systems.

On the other hand, in recent years several authors studied the behavior of null or approximate controls for
\eqref{heat_osc}-\eqref{init_osc} versus the corresponding controls for
\eqref{heat_osc_non}-\eqref{init_osc_non} when $\varepsilon \to 0$.
Some references tackling these subjects for parabolic equations are Castro and Zuazua \cite{Castro-Zuaz_Proc}, Lopez and Zuazua \cite{Lop_Zuaz}, Tebou \cite{Tebou} and Zuazua \cite{Zuazua}. In the case of time-reversible linear PDEs, similar problems have been tackled in Saint-Jean Paulin and Vanninathan \cite{Jeanine_Vanni_94,Jeanine_Vanni_97} and Cioranescu and Vanninathan \cite{Doina_Vanni_98}.

The difficulty of the problem we consider comes from the high frequency
oscillations in the coefficients which might give blow-up of the controls when $\varepsilon \to 0$.
We here are  interested in   the behavior of time optimal controls for \eqref{heat_osc_non}-\eqref{init_osc_non} when $\varepsilon \to 0$.
The new challenges brought in by this problem are that the controls have to be found in
a bounded set of the space  $L^\infty((0,\infty); L^2(\Omega))$ and that both the optimal time and the
 optimal control strongly depends on $\varepsilon$.

As a consequence of our main theorem, which will be precisely stated later, we
will prove in Section \ref{sec_final} the following result:

\

\begin{proposition}\label{corollary6.4} With the above notation, for each $\varepsilon\geqslant 0$, let
$(\tau_\varepsilon^*(M), u^*_{\varepsilon}(M))$ be the solution of the time optimal control problem ${(TPH)^M_\varepsilon}$.
Then we have that
$\displaystyle \tau^*_\varepsilon(M) \to \tau_0^*(M)$, $\displaystyle  u^*_{\varepsilon} \to u_0^*$ strongly in $ L^2((0,\tau_0^*(M));L^2(\Omega))$ and $u^*_{\varepsilon} \to u_0^*$ strongly in $L^\infty((0,\tau_0^*(M)-\delta);L^2(\Omega))$, for every $\delta\in (0,\tau^*_0(M))$.
\end{proposition}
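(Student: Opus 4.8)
The plan is to recognize Proposition \ref{corollary6.4} as a special case of the abstract convergence theorem for time optimal control problems stated later in the paper, and to verify that the PDE systems \eqref{heat_osc}--\eqref{init_osc} and \eqref{heat_osc_non}--\eqref{init_osc_non} fit into that abstract framework. Concretely, I would set $H = U = L^2(\Omega)$, let $A_\varepsilon = \mathrm{div}\,(a(\cdot/\varepsilon)\nabla\,\cdot\,)$ with Dirichlet boundary conditions for $\varepsilon > 0$, let $A_0 = \mathrm{div}\,(a_0\nabla\,\cdot\,)$, and take the control operator $B = \chi_\omega$ (bounded, hence admissible). The target is the closed ball of radius $r > 0$ centered at the origin, which is exactly the hypothesis under which the abstract theorem is proved. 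So the first step is bookkeeping: check that each $A_\varepsilon$, $\varepsilon \geqslant 0$, is self-adjoint, strictly negative, and generates an analytic semigroup, with uniform (in $\varepsilon$) bounds $m_1\|\nabla\cdot\|^2 \leqslant -\langle A_\varepsilon\cdot,\cdot\rangle \leqslant m_2\|\nabla\cdot\|^2$ coming from the ellipticity constants, so that the spectral gap and the admissibility/observability constants entering the abstract statement are uniform in $\varepsilon$.

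The second step is to verify the convergence hypothesis of the abstract theorem, namely that $A_\varepsilon \to A_0$ in the appropriate sense. This is precisely where homogenization theory enters, and I would invoke the classical fact (Bensoussan--Lions--Papanicolaou \cite{BLP}) that, with $a_0$ the homogenized matrix defined via the cell problem in Section \ref{sec_final}, the resolvents $(\lambda - A_\varepsilon)^{-1}$ converge strongly in $L^2(\Omega)$ to $(\lambda - A_0)^{-1}$ for $\lambda$ in the resolvent set; equivalently, for fixed $u$ and $\psi$, $z_\varepsilon(\cdot;\psi,u) \to z(\cdot;\psi,u)$ in $C([0,T];L^2(\Omega))$ (indeed with $H$-convergence of the fluxes). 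One must check this mode of convergence is exactly the one required as input by the abstract theorem — most likely strong resolvent convergence plus the uniform ellipticity bounds already noted — and that nothing more (e.g. norm resolvent convergence, which fails here) is needed.

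Once the two verifications are in place, the conclusion — $\tau_\varepsilon^*(M) \to \tau_0^*(M)$, strong $L^2((0,\tau_0^*(M));L^2(\Omega))$ convergence of the optimal controls, and strong $L^\infty((0,\tau_0^*(M)-\delta);L^2(\Omega))$ convergence on every interval bounded away from the optimal time — is a direct application of the abstract theorem, since all its hypotheses (ball target of positive radius, uniformly admissible control operators, strictly negative limit operator, the prescribed convergence of generators) have been met. The only genuine subtlety, and the place I expect to do real work rather than bookkeeping, is matching the precise topology of operator convergence used in the abstract statement with what homogenization actually delivers: the abstract machinery presumably relies on passing to the limit in the optimality/bang-bang conditions and in the observability inequalities, and one needs the homogenization convergence to be strong enough to pass to the limit in the adjoint states $B^*\,\mathrm{e}^{(\tau-t)A_\varepsilon^*}\varphi_\varepsilon$ while only being weak enough to actually hold (norm resolvent convergence being false for oscillating coefficients). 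Verifying that strong resolvent convergence, combined with the analyticity and uniform bounds, suffices for every limiting argument invoked by the abstract theorem is the crux; the rest is identification of constants and spaces.
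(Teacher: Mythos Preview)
Your overall strategy---recast the PDE problem in the abstract framework and apply Theorem~\ref{th_main}---is exactly what the paper does, and your identification of $X=U=L^2(\Omega)$, $B=\chi_\omega$, and the operators $A_\varepsilon$ is correct. However, two of the four abstract hypotheses are not addressed in your proposal, and your account of what is required is slightly off.

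First, you do not mention assumption (C4), the $L^1$-in-time observability inequality
\[
K(\varepsilon,\tau)\int_0^\tau \|B^*\mathbb{T}_t^\varepsilon \eta\|_U\,{\rm d}t \geqslant \|\mathbb{T}_\tau^\varepsilon \eta\|,
\]
which is \emph{not} a consequence of the uniform ellipticity bounds or analyticity; it is equivalent to $L^\infty$-null controllability of the parabolic equation and requires a separate, nontrivial citation (the paper invokes Wang \cite{wang_linf}). Moreover, (C4) does \emph{not} require $K(\varepsilon,\tau)$ to be uniform in $\varepsilon$---only continuity in $\tau$ for each fixed $\varepsilon$---so your concern about uniform observability constants is misplaced and unnecessary.

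Second, for the $L^\infty((0,\tau_0^*(M)-\delta);L^2(\Omega))$ convergence of controls, Theorem~\ref{th_main} requires the additional hypothesis \eqref{Lin_unique}: if $B^*\mathbb{T}_{t_0}^\varepsilon \eta=0$ for some $t_0>0$ then $\eta=0$. For the heat operator with internal observation this is the \emph{strong} unique continuation property (vanishing on $\omega$ at a single time forces vanishing everywhere), which the paper verifies by citing Lin \cite{Lin}. You omit this entirely, yet it is what upgrades the $L^2$ convergence of controls to $L^\infty$ convergence away from the final time. Once (C1)--(C4) and \eqref{Lin_unique} are verified, the conclusion is indeed a direct application of Theorem~\ref{th_main}; there is no further ``matching of topologies'' to do, since the abstract theorem is designed precisely so that strong resolvent convergence (your (C3)) suffices.
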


\

To prove the above result we found that it is  more convenient to give our main results in an abstract
framework, containing homogenization of time optimal control problems of parabolic equations as
a particular case. Besides providing more generality, this approach seems more appropriate to point out
the main ideas used in the proof.
We describe below the abstract framework which will be used in most of the remaining part of this paper. Let $X$ be a Hilbert space, whose inner product and the corresponding norm are  denoted by $\langle\cdot,\cdot\rangle$ and  $\|\cdot\|$ respectively. We introduce a family $(A_\varepsilon)_{\varepsilon\geqslant 0}$
of linear operators on $X$, with a common domain $X_1\subset X$, satisfying the following assumptions: \\ \ \\
(C1) The embedding $X_1\subset X$ is compact and the operator  $A_\varepsilon$ is,
for every $\varepsilon \geqslant 0$, self-adjoint in $X$.\\
(C2) $A_0$ is strictly positive and there exist positive constants $m_0$ and $m_1$ such that
$$
m_0 \langle A_0 \psi,\psi\rangle \leqslant \langle A_\varepsilon \psi,\psi\rangle \leqslant m_1
 \langle A_0 \psi,\psi\rangle \qquad(\psi\in X_1).
$$
(C3) We have $\displaystyle \lim_{\varepsilon \to 0+} \|A_\varepsilon^{-1} \psi- A_0^{-1}\psi\|=0$ for every $\psi\in X$.
\\ \ \\
It is well known that $-A_\varepsilon$ generates (for every $\varepsilon \geqslant 0$) a contraction semigroup $\mathbb{T}^\varepsilon$ on $X$, which is analytic (see, for instance, Tucsnak and Weiss \cite[Proposition 3.8.5]{obsbook}
and Arendt \cite[Example 3.7.5]{Cauchy}).
Let $U$ be another Hilbert space and let $B\in {\cal L}(U,X)$, called the control operator. Consider the family of systems $(\Sigma_\varepsilon)_{\varepsilon\geqslant 0}$, with the state space $X$ and the input space $U$, described by
$$
\dot z_\varepsilon(t)+A_\varepsilon z_\varepsilon(t)=Bu(t) \qquad (t\geqslant 0),\qquad
\qquad z_\varepsilon(0)=\psi\in X.  \leqno{(\Sigma_\varepsilon)}
$$
For $\varepsilon,\ \tau\geqslant 0$, we introduce the input to state operator
$\Phi_\tau^\varepsilon\in {\cal L}(L^\infty((0,\infty);U),X)$, defined by
\begin{equation}\label{fieps}
\Phi_\tau^\varepsilon u = \int_0^\tau \mathbb{T}_{\tau-\sigma}^\varepsilon Bu(\sigma)\, {\rm d}\sigma
\qquad (u\in L^\infty((0,\infty);U)),
\end{equation}
so that the state trajectory of $(\Sigma_\varepsilon)$ is given by
$z_\varepsilon(t)=\mathbb{T}_t^\varepsilon \psi+ \Phi_t^\varepsilon u$.
As shown in the next section, assumptions $(C1)$-$(C3)$ above ensure the convergence, in an appropriate sense,
of $\mathbb{T}^\varepsilon$ to $\mathbb{T}^0$ and of $\Phi_t^\varepsilon$ to $\Phi_t^0$.
For each $M>0$, we set
$$\mathcal{U}_M:=\left\lbrace u \ | \ u \in L^{\infty}((0,\infty);U), \mbox{\rm s.t. }
\|u\|_{ L^{\infty}((0,\infty);U)} \leqslant M \right\rbrace.$$
Denote $B(0,r)$, with $r>0$ fixed,  the closed ball in $X$, centered at the origin and of radius r.
We fix  $\psi\not \in {B}(0,r)$. For each $M>0$ and $\varepsilon\geqslant 0$, we define the
time optimal control problem:
\begin{equation}\label{time-optimal-control1.9}
(TP)^M_\varepsilon\qquad\qquad\tau_\varepsilon^*(M):= \displaystyle\min_{u \in \cal{U}_M} \left\lbrace t
\ |\ \mathbb{T}_t^\varepsilon \psi+ \Phi_t^\varepsilon u \in B(0,r)\right\rbrace.
\end{equation}
It is well known that the above optimal control problem admits at least one solution
$(\tau_\varepsilon^*(M) ,u_\varepsilon^*)$.
Our main purpose is to study the asymptotic behavior (when $\varepsilon \to 0$) of the solutions of $(TP)^M_\varepsilon$.
Our method to study this problem requires one more assumption:
\\ \ \\
(C4) For every $\varepsilon\geqslant 0$ and $\tau>0$, there exists $K(\varepsilon,\tau)>0$ such that
$$
K(\varepsilon,\tau)\int_0^\tau \|B^*\mathbb{T}_t^\varepsilon \psi\|_U\,  {\rm d} t\geqslant \|\mathbb{T}_\tau^\varepsilon\psi\|
\qquad(\psi\in X),
$$
where, for each $\varepsilon\geqslant 0$, the map $\tau\mapsto K(\varepsilon, \tau)$ is continuous on $(0, \infty)$.
\\ \ \\
Our main result states as follows:

\

\begin{theorem}\label{th_main}
With the above notation, let $(A_\varepsilon)_{\varepsilon\geqslant 0}$ and $B$ satisfy assumptions (C1)-(C4) above.
For every $M>0$ and $\varepsilon\geqslant 0$
we denote by $(\tau_\varepsilon^*(M),u_\varepsilon^*)$ the solution of the time optimal control problem $(TP)^M_\varepsilon$.
Then we have
\begin{equation}\label{time_conv}
\displaystyle\tau_\varepsilon^*(M)\to\tau_0^*(M),
\end{equation}
\vspace{-0.75cm}
\begin{equation}\label{con_conv}
u_\varepsilon^*\to u_0^*\quad \mbox{\rm strongly in}\quad L^2((0,\tau_0^*(M));U).
\end{equation}
We further assume that for $\varepsilon \geqslant 0$  we have
\begin{equation}\label{Lin_unique}
\eta\in X \quad{\rm and}\quad B^*\mathbb{T}_{t_0}^\varepsilon \eta =0 \qquad\hbox{\rm for some }\ t_0>0 \Longrightarrow \eta =0.
\end{equation} 
Then
for every $\delta \in (0,\tau_0^*(M))$ we have
\begin{equation}\label{con_conv_unif}
 \displaystyle u_\varepsilon^* \to u_0^*\quad\mbox{\rm strongly in} \quad L^\infty((0,\tau_0^*(M)-\delta);U).
\end{equation}
\end{theorem}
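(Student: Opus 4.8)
The plan is to prove $\limsup_{\varepsilon\to0}\tau_\varepsilon^*(M)\leqslant\tau_0^*(M)\leqslant\liminf_{\varepsilon\to0}\tau_\varepsilon^*(M)$, to extract along the way the strong convergence of the final states $z_\varepsilon(\tau_\varepsilon^*(M);\psi,u_\varepsilon^*)$ in $X$, then to upgrade the weak $L^2$-convergence of the optimal controls to \eqref{con_conv}, and finally to derive \eqref{con_conv_unif} from the maximum principle representation of the optimal controls. I will use freely the convergences $\mathbb{T}^\varepsilon_t\to\mathbb{T}^0_t$ strongly, uniformly for $t$ in compact subsets of $[0,\infty)$, and $\Phi^\varepsilon_t\to\Phi^0_t$, which follow from (C1)--(C3), together with three observations. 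First, each $-A_\varepsilon$ being self-adjoint and strictly positive, $\mathbb{T}^\varepsilon$ is a strictly contractive analytic semigroup obeying the universal bounds $\|A_\varepsilon^\beta\mathbb{T}^\varepsilon_s\|\leqslant C_\beta s^{-\beta}$; combined with (C2) and the Heinz inequality, which yields $\|A_0^\beta x\|\leqslant C\|A_\varepsilon^\beta x\|$ for $\beta\in(0,1/2)$ uniformly in $\varepsilon$, this shows that $\Phi^\varepsilon_\tau$ maps $\mathcal{U}_M$ into a fixed bounded subset of $D(A_0^\beta)$, hence into a fixed compact subset of $X$, and with a modulus of continuity in $\tau$ that is independent of $\varepsilon$. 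Second, since $\psi\notin B(0,r)$, a short-time estimate gives $\tau_0^*(M)>0$, and minimality forces the target constraint to be active: $\|z_\varepsilon(\tau_\varepsilon^*(M);\psi,u_\varepsilon^*)\|=r$ for all $\varepsilon\geqslant0$. Third, I will use that under (C4) the time optimal control of $(TP)^M_\varepsilon$ is bang-bang, $\|u_\varepsilon^*(t)\|_U=M$ for a.e.\ $t\in(0,\tau_\varepsilon^*(M))$, and unique --- this is where the new results on time and norm optimal control announced in the introduction are invoked.

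For the upper bound, fix $t_1>\tau_0^*(M)$ and let $\tilde u$ coincide with $u_0^*$ on $(0,\tau_0^*(M))$ and vanish on $(\tau_0^*(M),t_1)$. Since $\mathbb{T}^0$ is strictly contractive and $z_0(\tau_0^*(M);\psi,u_0^*)$ is nonzero (its norm equals $r$), we have $\|z_0(t_1;\psi,\tilde u)\|=\|\mathbb{T}^0_{t_1-\tau_0^*(M)}z_0(\tau_0^*(M);\psi,u_0^*)\|<r$; as $\mathbb{T}^\varepsilon_{t_1}\psi+\Phi^\varepsilon_{t_1}\tilde u\to z_0(t_1;\psi,\tilde u)$ with $\tilde u$ fixed, this state lies in $B(0,r)$ for $\varepsilon$ small, so $\tau_\varepsilon^*(M)\leqslant t_1$, and letting $t_1\downarrow\tau_0^*(M)$ gives $\limsup_{\varepsilon\to0}\tau_\varepsilon^*(M)\leqslant\tau_0^*(M)$. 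For the lower bound --- the principal difficulty --- take $\varepsilon_n\to0$ with $\tau_{\varepsilon_n}^*(M)\to\tau_*$ (the sequence is bounded by the upper bound) and, after a diagonal extraction, $u_{\varepsilon_n}^*\rightharpoonup\tilde u$ weakly in $L^2((0,T);U)$ for every $T>0$, with $\tilde u\in\mathcal{U}_M$. I claim that $z_{\varepsilon_n}(\tau_{\varepsilon_n}^*(M);\psi,u_{\varepsilon_n}^*)\to\mathbb{T}^0_{\tau_*}\psi+\Phi^0_{\tau_*}\tilde u$ strongly in $X$. The homogeneous part converges by the uniform-on-compacts semigroup convergence. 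For $\Phi^{\varepsilon_n}_{\tau_{\varepsilon_n}^*(M)}u_{\varepsilon_n}^*$, the $\varepsilon$-uniform modulus of continuity in $\tau$, together with the crude bound $\|B\|M|\tau_{\varepsilon_n}^*(M)-\tau_*|$ for the contribution of the time interval between $\tau_{\varepsilon_n}^*(M)$ and $\tau_*$, reduces matters to $\Phi^{\varepsilon_n}_{\tau_*}u_{\varepsilon_n}^*$; writing $u_{\varepsilon_n}^*=\tilde u+(u_{\varepsilon_n}^*-\tilde u)$, the first term tends to $\Phi^0_{\tau_*}\tilde u$, while for each $v\in X$ the self-adjointness of $\mathbb{T}^{\varepsilon_n}$ gives $\langle\Phi^{\varepsilon_n}_{\tau_*}(u_{\varepsilon_n}^*-\tilde u),v\rangle=\int_0^{\tau_*}\langle u_{\varepsilon_n}^*(\sigma)-\tilde u(\sigma),B^*\mathbb{T}^{\varepsilon_n}_{\tau_*-\sigma}v\rangle_U\,\mathrm{d}\sigma\to0$, since $B^*\mathbb{T}^{\varepsilon_n}_{\tau_*-\cdot}v\to B^*\mathbb{T}^0_{\tau_*-\cdot}v$ strongly in $L^2((0,\tau_*);U)$ by dominated convergence while $u_{\varepsilon_n}^*-\tilde u\rightharpoonup0$; as $\{\Phi^{\varepsilon_n}_{\tau_*}(u_{\varepsilon_n}^*-\tilde u)\}$ lies in a fixed compact set, this weak convergence to $0$ is in fact strong. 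Hence the claim holds, and since $B(0,r)$ is closed, $\mathbb{T}^0_{\tau_*}\psi+\Phi^0_{\tau_*}\tilde u\in B(0,r)$, so $(\tau_*,\tilde u)$ is admissible for $(TP)^M_0$ and $\tau_0^*(M)\leqslant\tau_*$. Together with the upper bound this proves \eqref{time_conv}; moreover, once the weak limit $\tilde u$ has been identified with $u_0^*$ (next paragraph), the same computation yields $z_\varepsilon(\tau_\varepsilon^*(M);\psi,u_\varepsilon^*)\to z_0(\tau_0^*(M);\psi,u_0^*)$ strongly in $X$.

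For \eqref{con_conv}, observe that any weak-$L^2$ limit of a subsequence of $u_\varepsilon^*$ is, by the previous paragraph, a time optimal control for $(TP)^M_0$, hence equals $u_0^*$ by uniqueness; thus $u_\varepsilon^*\rightharpoonup u_0^*$ weakly in $L^2((0,\tau_0^*(M));U)$. Extending all optimal controls by zero past their optimal times, the bang-bang property and \eqref{time_conv} give $\int_0^{\tau_0^*(M)}\|u_\varepsilon^*(t)\|_U^2\,\mathrm{d}t=M^2\min(\tau_\varepsilon^*(M),\tau_0^*(M))\to M^2\tau_0^*(M)=\int_0^{\tau_0^*(M)}\|u_0^*(t)\|_U^2\,\mathrm{d}t$, so weak convergence together with convergence of the norms in the Hilbert space $L^2((0,\tau_0^*(M));U)$ yields the strong convergence \eqref{con_conv}.

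Finally, assume \eqref{Lin_unique}. By Pontryagin's maximum principle each optimal pair $(\tau_\varepsilon^*(M),u_\varepsilon^*)$ possesses a costate $\eta_\varepsilon\in X$, parallel to $z_\varepsilon(\tau_\varepsilon^*(M);\psi,u_\varepsilon^*)$ by the transversality condition on $\partial B(0,r)$ and normalized so that $\|\eta_\varepsilon\|=r$, with $u_\varepsilon^*(t)=M\,B^*\mathbb{T}^\varepsilon_{\tau_\varepsilon^*(M)-t}\eta_\varepsilon\,/\,\|B^*\mathbb{T}^\varepsilon_{\tau_\varepsilon^*(M)-t}\eta_\varepsilon\|_U$ for a.e.\ $t\in(0,\tau_\varepsilon^*(M))$; the denominator is positive because $\eta_\varepsilon\neq0$ and \eqref{Lin_unique} holds. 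By the strong convergence of the final states obtained above, $\eta_\varepsilon\to\eta_0$ strongly in $X$, where $\eta_0$ is a costate for $(TP)^M_0$ with $\|\eta_0\|=r$, so $\eta_0\neq0$ and $u_0^*$ is given by the analogous formula. Fix $\delta\in(0,\tau_0^*(M))$; for $t\in(0,\tau_0^*(M)-\delta)$ and $\varepsilon$ small we have $\tau_\varepsilon^*(M)-t\geqslant\delta/2$, and using the $\varepsilon$-uniform analyticity bounds to absorb the time shift $|\tau_\varepsilon^*(M)-\tau_0^*(M)|$ (as in the lower-bound step), the uniform-on-compacts convergence $\mathbb{T}^\varepsilon_s\to\mathbb{T}^0_s$, and the contractivity of $\mathbb{T}^\varepsilon$ to control $\eta_\varepsilon-\eta_0$, one checks that $B^*\mathbb{T}^\varepsilon_{\tau_\varepsilon^*(M)-t}\eta_\varepsilon\to B^*\mathbb{T}^0_{\tau_0^*(M)-t}\eta_0$ uniformly in $t\in(0,\tau_0^*(M)-\delta)$. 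On the other hand $s\mapsto\|B^*\mathbb{T}^0_s\eta_0\|_U$ is continuous and, by \eqref{Lin_unique}, strictly positive on the compact interval $[\delta,\tau_0^*(M)]$, hence bounded below there by some $c>0$; since $x\mapsto x/\|x\|_U$ is uniformly continuous on $\{x\in U:\|x\|_U\geqslant c\}$, dividing the numerators by their norms gives $u_\varepsilon^*\to u_0^*$ uniformly on $(0,\tau_0^*(M)-\delta)$, i.e.\ \eqref{con_conv_unif}. This last step, relying on the adjoint representation of the optimal controls and \eqref{Lin_unique}, is the second delicate point; the crux of the whole argument, however, is the lower bound for the optimal time, where the final state pairs a semigroup converging only strongly with a control converging only weakly, and it is precisely the self-adjointness of $A_\varepsilon$ and the $\varepsilon$-uniform smoothing of $\Phi^\varepsilon_\tau$ that make that passage to the limit go through.
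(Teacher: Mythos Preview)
Your proof is correct and takes a genuinely different, in several respects more economical, route than the paper.

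The main structural divergence is in how the inequality $\limsup_{\varepsilon\to 0}\tau_\varepsilon^*(M)\leqslant\tau_0^*(M)$ is obtained. You argue directly: extend $u_0^*$ by zero beyond $\tau_0^*(M)$, use that $\|z_0(\tau_0^*(M);\psi,u_0^*)\|=r$ together with the strict contractivity $\|\mathbb{T}^0_s\|\leqslant e^{-\alpha_0 s}$ (a consequence of (C1)--(C2)) to push the state strictly inside $B(0,r)$ at any $t_1>\tau_0^*(M)$, and then use the convergence $\Phi^\varepsilon_{t_1}\to\Phi^0_{t_1}$ on a \emph{fixed} input. The paper instead builds a detour through the norm optimal problems $(NP)^\tau_\varepsilon$: it develops the equivalence $N^*_\varepsilon(\tau^*_\varepsilon(M))=M$ (Proposition~\ref{NT}), proves convergence of the minimizers of the auxiliary functionals $J^\tau_\varepsilon$ (Proposition~\ref{strong convergence minimizer} and Lemma~\ref{coercivity}), deduces $N^*_\varepsilon(\tau)\to N^*_0(\tau)$ (Corollary~\ref{lemme N convergence}), and only then derives the upper bound by contradiction via the strict monotonicity of $N^*_0$. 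Your argument bypasses this entire apparatus.

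The second divergence is in the $L^\infty$ step. You read off the costate $\eta_\varepsilon$ directly from the transversality condition of Theorem~\ref{bb}, namely $\eta_\varepsilon=-z_\varepsilon(\tau_\varepsilon^*(M);\psi,u_\varepsilon^*)$, and obtain $\eta_\varepsilon\to\eta_0$ strongly from the strong convergence of the final states, which you secured earlier through the $\varepsilon$-uniform smoothing $\Phi^\varepsilon_\tau\mathcal{U}_M\subset\{x:\|A_0^\beta x\|\leqslant C\}$ (via (C2) and the Heinz inequality) and hence compactness in $X$. The paper instead takes $\widehat\eta_\varepsilon$ to be the minimizer of $J^{\tau_\varepsilon^*(M)}_\varepsilon$ and invokes Proposition~\ref{strong convergence minimizer} to get $\widehat\eta_\varepsilon\to\widehat\eta_0$ strongly. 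The two vectors are proportional (by the Euler--Lagrange equation \eqref{EL} and Theorem~\ref{bb}), so the optimal-control formula is the same; but your route avoids the coercivity analysis of $J^\tau_\varepsilon$ altogether.

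What each approach buys: yours is shorter and isolates the two genuine analytic inputs --- the uniform exponential decay of $\mathbb{T}^\varepsilon$ and the $\varepsilon$-uniform parabolic smoothing of $\Phi^\varepsilon_\tau$ --- as the engines of the whole proof. The paper's approach, while heavier, yields along the way a body of results of independent interest (the equivalence of time and norm optimal problems in Proposition~\ref{NT}, the explicit representation via $J^\tau$ in Propositions~\ref{JT and NT} and~\ref{equivalence J-T}, and the stability of the minimizers in Proposition~\ref{strong convergence minimizer}); these are advertised in the introduction as contributions in their own right and would survive, for instance, relaxations of the target geometry where your strict-contractivity shortcut for the upper bound is less immediate.
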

It is worth mentioning that similar convergence results have been obtained in Yu \cite{Yu} for a special case where the controlled systems are heat equations with small perturbations in the lower terms, see Example~\ref{heat_osc_simple}
in Section \ref{sec_final}. Note that the methods in \cite{Yu} do not seem applicable to the more general problem studied in our work.

The outline of the remaining part of this paper is as follows.
Section \ref{sec_approx} is devoted to study the convergence of abstract families
of analytic semigroups and some associated input to state maps when the operators
$A_\varepsilon$ converge to $A_0$ (in the sense of assumption (C3)).
Section \ref{sec_back_time} contains some necessary background on time and norm optimal control problems. In Section \ref{sec_relation}, we establish, in an abstract framework, a relation between  time and  norm optimal control problems. In Section \ref{sec_main_proof}, we provide the proof of our main theorem. In Section \ref{sec_final} we apply our abstract results to the homogenization problem stated above and we discuss other possible applications.
\section{Some approximation results for the semigroups}
\label{sec_approx}
\setcounter{equation}{0}

\

In this section, using the same notation as in the previous section, we study the behavior of the family of semigroups $(\mathbb{T}^\varepsilon)_{\varepsilon\geqslant 0}$ and of the input to state maps $(\Phi_\tau^\varepsilon)_{\varepsilon, \tau\geqslant 0}$,
introduced in \eqref{fieps}, when $\varepsilon \to 0$. These results can be seen as an abstract version of those on homogenization
of parabolic equations, as described, for instance, in \cite[Ch.2]{BLP}.

In the remaining part of this section, we assume that $(A_\varepsilon)_{\varepsilon \geqslant 0}$ satisfies
the assumptions (C1)-(C3) introduced in the previous section.
It is not difficult to check that there exist positive constants $K$ and $\alpha_0$ such that
\begin{equation}\label{unif_decay}
\|\mathbb{T}_{
\tau}^\varepsilon\|_{{\cal L}(X)}\leqslant e^{-\alpha_0 \tau} , \qquad \|\Phi_\tau^\varepsilon\|_{{\cal L}(L^\infty((0,\infty);U),X)}\leqslant K
\qquad(\tau, \;\varepsilon\geqslant 0)
\end{equation}

\begin{proposition}\label{cons_abs_1}
With the above notation, let $(A_\varepsilon)_{\varepsilon\geqslant 0}$ be a family of operators  satisfying
the assumption (C1)-(C3) in the previous section. Moreover, let $(\psi_\varepsilon)_{\varepsilon\geqslant 0}$ be a family of vectors in $X$.
For every $\varepsilon,t\geqslant 0$ we set  $\varphi_{\varepsilon}(t)=\mathbb{T}_t^\varepsilon \psi_\varepsilon$.
Then we have the following assertions:\\ \ \\
1. If $\displaystyle\psi_\varepsilon\to\psi_0$ weakly in $X$, then
$\displaystyle\varphi_{\varepsilon} \to\varphi_0$  weakly* in $L^\infty((0,\infty),X)$.\\

\noindent 2. If  $\displaystyle\psi_\varepsilon\to\psi_0$ strongly in $X$, then  for every $\tau>0$,
$ \displaystyle\varphi_\varepsilon\to\varphi_0 $ in $C([0,\tau];X)$.\\

\noindent3. If $\displaystyle\psi_\varepsilon\to\psi_0$ weakly in $X$ and  $(\tau_\varepsilon)_{\varepsilon \geqslant 0}$ is a family of positive numbers
with $\displaystyle\tau_\varepsilon\to\tau_0$,  then
$\displaystyle  \varphi_\varepsilon(\tau_\varepsilon) \to \varphi_0(\tau_0)$ strongly in $X$.
\end{proposition}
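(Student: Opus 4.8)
The plan is to first establish one single fact, call it $(\star)$: for every $\psi\in X$ and every $\tau>0$ one has $\mathbb{T}_t^\varepsilon\psi\to\mathbb{T}_t^0\psi$ in $X$, uniformly for $t\in[0,\tau]$, as $\varepsilon\to0+$; and then to read off the three assertions from it. I would obtain $(\star)$ from the Trotter--Kato approximation theorem: each $-A_\varepsilon$ generates a $C_0$-semigroup, the uniform bound $\|\mathbb{T}_t^\varepsilon\|_{{\cal L}(X)}\leqslant e^{-\alpha_0t}$ from \eqref{unif_decay} (with $\alpha_0>0$) makes $\lambda=0$ an admissible point for that theorem, and the resolvent of $-A_\varepsilon$ at $\lambda=0$ is exactly $A_\varepsilon^{-1}$, so that (C3) is precisely the strong convergence of these resolvents to $A_0^{-1}$; alternatively, as the $A_\varepsilon$ are self-adjoint, $(\star)$ can be read off from the strong resolvent convergence via the spectral theorem. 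Along with $(\star)$ I would record the uniform analytic estimate $\|A_\varepsilon\mathbb{T}_t^\varepsilon\|_{{\cal L}(X)}\leqslant(et)^{-1}$ for $t>0$, $\varepsilon\geqslant0$, immediate from $\sup_{\lambda\geqslant0}\lambda e^{-t\lambda}=(et)^{-1}$ and self-adjointness of $A_\varepsilon$. Granting $(\star)$, assertion~2 is then just the triangle inequality $\|\mathbb{T}_t^\varepsilon\psi_\varepsilon-\mathbb{T}_t^0\psi_0\|\leqslant\|\psi_\varepsilon-\psi_0\|+\sup_{s\in[0,\tau]}\|\mathbb{T}_s^\varepsilon\psi_0-\mathbb{T}_s^0\psi_0\|$, the first term controlled by \eqref{unif_decay} and the second by $(\star)$.

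For assertion~1, I would first note $\|\varphi_\varepsilon(t)\|\leqslant e^{-\alpha_0t}\|\psi_\varepsilon\|\leqslant\sup_\varepsilon\|\psi_\varepsilon\|<\infty$, so that $(\varphi_\varepsilon)$ is bounded in $L^\infty((0,\infty);X)$, which is the dual of $L^1((0,\infty);X)$ (the duality being valid since the Hilbert space $X$ has the Radon--Nikodym property). By Banach--Alaoglu it then suffices to check that every weak$^*$ limit point of this bounded family equals $\varphi_0$. To identify it I would test against integrands $g(t)=\phi(t)\eta$ with $\phi\in L^1(0,\infty)$ scalar and $\eta\in X$: using self-adjointness of $\mathbb{T}_t^\varepsilon$, $\int_0^\infty\phi(t)\langle\varphi_\varepsilon(t),\eta\rangle\,{\rm d}t=\int_0^\infty\phi(t)\langle\psi_\varepsilon,\mathbb{T}_t^\varepsilon\eta\rangle\,{\rm d}t$, and for each fixed $t>0$ the integrand tends to $\langle\psi_0,\mathbb{T}_t^0\eta\rangle=\langle\varphi_0(t),\eta\rangle$ (by the weak convergence of $\psi_\varepsilon$ together with $(\star)$) and is dominated by $|\phi(t)|\,(\sup_\varepsilon\|\psi_\varepsilon\|)\,\|\eta\|\in L^1(0,\infty)$; dominated convergence, the density of the span of such $g$ in $L^1((0,\infty);X)$, and separability of $X$ then force the limit point to be $\varphi_0$, whence the whole family converges.

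For assertion~3 (where $\tau_0>0$ is implicit, and holds in the intended applications) I would split $\varphi_\varepsilon(\tau_\varepsilon)-\varphi_0(\tau_0)=\big(\varphi_\varepsilon(\tau_\varepsilon)-\mathbb{T}_{\tau_0}^\varepsilon\psi_\varepsilon\big)+\big(\mathbb{T}_{\tau_0}^\varepsilon\psi_\varepsilon-\mathbb{T}_{\tau_0}^0\psi_0\big)$. For the first bracket the uniform analytic estimate gives, since $\mathbb{T}_s^\varepsilon\psi_\varepsilon\in X_1$ for $s>0$, $\|\varphi_\varepsilon(\tau_\varepsilon)-\mathbb{T}_{\tau_0}^\varepsilon\psi_\varepsilon\|=\big\|\int_{\tau_0}^{\tau_\varepsilon}A_\varepsilon\mathbb{T}_s^\varepsilon\psi_\varepsilon\,{\rm d}s\big\|\leqslant e^{-1}(\sup_\varepsilon\|\psi_\varepsilon\|)\,\big|\ln(\tau_\varepsilon/\tau_0)\big|\to0$. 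For the second bracket I would set $\chi_\varepsilon:=\mathbb{T}_{\tau_0/2}^\varepsilon\psi_\varepsilon$, so that $\mathbb{T}_{\tau_0}^\varepsilon\psi_\varepsilon=\mathbb{T}_{\tau_0/2}^\varepsilon\chi_\varepsilon$; the analytic estimate gives $\langle A_\varepsilon\chi_\varepsilon,\chi_\varepsilon\rangle\leqslant\|A_\varepsilon\chi_\varepsilon\|\,\|\chi_\varepsilon\|\leqslant\frac{2}{e\tau_0}(\sup_\varepsilon\|\psi_\varepsilon\|)^2$, and then (C2) bounds $\langle A_0\chi_\varepsilon,\chi_\varepsilon\rangle$, i.e.\ $(\chi_\varepsilon)$ is bounded in the form domain $D(A_0^{1/2})$, which by (C1) (so $A_0$ has compact resolvent) embeds compactly into $X$; hence $(\chi_\varepsilon)$ is relatively compact in $X$. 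Since moreover $\chi_\varepsilon\to\mathbb{T}_{\tau_0/2}^0\psi_0=:\chi_0$ weakly in $X$ --- by the self-adjointness-plus-$(\star)$ computation used for assertion~1, now with $t=\tau_0/2$ fixed --- relative compactness upgrades this to $\chi_\varepsilon\to\chi_0$ strongly in $X$, and then, exactly as in assertion~2, $\mathbb{T}_{\tau_0/2}^\varepsilon\chi_\varepsilon\to\mathbb{T}_{\tau_0/2}^0\chi_0=\mathbb{T}_{\tau_0}^0\psi_0=\varphi_0(\tau_0)$. Adding the two brackets gives assertion~3.

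The step I expect to be the main obstacle is the second bracket of assertion~3: upgrading the merely weak hypothesis $\psi_\varepsilon\rightharpoonup\psi_0$ to the strong conclusion $\mathbb{T}_{\tau_0}^\varepsilon\psi_\varepsilon\to\mathbb{T}_{\tau_0}^0\psi_0$. This is exactly where parabolic (analytic) smoothing is indispensable --- it forces $\chi_\varepsilon=\mathbb{T}_{\tau_0/2}^\varepsilon\psi_\varepsilon$ into one \emph{fixed} bounded set, $D(A_0^{1/2})$ via (C2), that embeds compactly into $X$ via (C1), which is what converts the weak limit into a strong one. Everything else is a routine combination of $(\star)$, the uniform bounds \eqref{unif_decay}, self-adjointness, and standard functional analysis.
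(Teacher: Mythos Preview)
Your proof is correct. Assertion~2 is handled exactly as in the paper (triangle inequality plus Trotter--Kato), but for assertions~1 and~3 you take genuinely different routes.

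For assertion~1, the paper splits $\varphi_\varepsilon-\varphi_0=(\varphi_\varepsilon-\mathbb{T}^\varepsilon\psi_0)+(\mathbb{T}^\varepsilon\psi_0-\varphi_0)$ and identifies the weak$^*$ limit of the second piece via Laplace transforms and resolvent convergence. You instead exploit self-adjointness to dualize $\langle\mathbb{T}_t^\varepsilon\psi_\varepsilon,\eta\rangle=\langle\psi_\varepsilon,\mathbb{T}_t^\varepsilon\eta\rangle$ and pass to the limit directly; this is shorter and avoids the Laplace machinery, at the cost of using self-adjointness more explicitly. (Incidentally, your direct computation already gives weak$^*$ convergence on a total set, so the Banach--Alaoglu/separability detour is not actually needed.)

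For assertion~3, the paper uses a three-term split and works in the negative space $X_{-1}$: compactness of $X\hookrightarrow X_{-1}$ upgrades weak convergence of $\psi_\varepsilon$ to strong convergence in $X_{-1}$, and the analytic bound $\|\mathbb{T}_t^\varepsilon\|_{\mathcal{L}(X_{-1},X)}\leqslant t^{-1}$ then lifts this back to $X$. You instead use a two-term split with the half-time trick, smoothing $\psi_\varepsilon$ into $\chi_\varepsilon=\mathbb{T}_{\tau_0/2}^\varepsilon\psi_\varepsilon$ and using (C2) to land in a \emph{fixed} bounded set of $D(A_0^{1/2})$, which embeds compactly into $X$. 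Both arguments encode the same mechanism (analytic smoothing plus a compact embedding converts weak to strong), but yours stays on the positive side of the interpolation scale and makes the role of (C2) more transparent, while the paper's $X_{-1}$ route is a touch cleaner bookkeeping-wise (no half-time splitting).
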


\

\begin{proof}  For each $\varepsilon,t \geqslant 0$,
we set $\widetilde \varphi_\varepsilon(t)=\mathbb{T}_t^\varepsilon \psi_0\;\; (t\geqslant 0)$.
We first show that
\begin{equation}\label{to prove 2.1.1}
\displaystyle \lim_{\varepsilon \to 0+} (\varphi_\varepsilon - \widetilde \varphi_\varepsilon)
= 0 \quad\mbox{\rm weakly* in }\quad L^\infty((0,\infty);X).
\end{equation}
Indeed, let $v\in L^1((0,\infty);X)$. By (C1)-(C3), we can apply  the Trotter-Kato theorem (see, for instance, \cite[Theorem 3.6.1,Proposition 3.6.2]{Cauchy}), to get  that,
$$
\displaystyle \lim_{\varepsilon\to 0}\left\|(\mathbb{T}_t^\varepsilon -\mathbb{T}_t^0) v(t)\right\| = 0
\qquad \qquad(t\geqslant 0\ {\rm a.e.}).
$$
By the first estimate in \eqref{unif_decay}, we have that $\|(\mathbb{T}_t^\varepsilon -\mathbb{T}_t^0) v(t)\|\leqslant 2\|v(t)\|$ for almost every $t\geqslant 0$, so that, by the dominated convergence theorem,
we obtain that the map $t\mapsto (\mathbb{T}_t^\varepsilon -\mathbb{T}_t^0) v(t)$ converges strongly to $0$ in $L^1((0,\infty);X)$ when $\varepsilon\to 0$.
This, together with the weak  convergence of $\psi_\varepsilon$ to $\psi_0$, gives \eqref{to prove 2.1.1}.

We next verify that
\begin{equation}\label{to prove 2.1.2}
\displaystyle \lim_{\varepsilon \to 0+} (\widetilde \varphi_\varepsilon - \varphi_0)
 = 0 \quad \mbox{\rm weakly* in }\quad L^\infty((0,\infty);X).
\end{equation}
Since $\{\widetilde \varphi_\varepsilon\}_{\varepsilon>0}$
 is bounded in $L^\infty((0,\infty);X)$,
it follows from  the Alaoglu theorem that,  up to the extraction of a subsequence,
\begin{equation}\label{Alao}
\displaystyle \lim_{\varepsilon\to 0+}\widetilde \varphi_\varepsilon = \widetilde \varphi_0 \qquad\hbox{\rm weakly* in }\quad L^\infty((0,\infty);X)
\end{equation}
for some $\widetilde \varphi_0\in L^\infty((0,\infty);X)$.
Hence,  still up to the extraction of a subsequence,
$$
\int_0^\infty {\rm e}^{-s t}\langle \widetilde \varphi_\varepsilon(t),\eta\rangle \, {\rm d}t
\to \int_0^\infty {\rm e}^{-s t}\langle \widetilde \varphi_0(t),\eta\rangle \, {\rm d}t \qquad(s>0,\ \eta\in X).
$$
Using a classical result (see, for instance, \cite[Proposition 2.3.1]{obsbook}),
we have
$$
\int_0^\infty {\rm e}^{-s t} \widetilde \varphi_\varepsilon(t)\, {\rm d}t =
(sI+A_\varepsilon)^{-1} \psi_0 \qquad(\varepsilon \geqslant 0,\ s>0),
$$
so that \ $\displaystyle \lim_{\varepsilon \to 0+}\langle (sI+A_\varepsilon)^{-1} \psi_0,\eta\rangle = \int_0^\infty
{\rm e}^{-s t}\langle \widetilde \varphi_0(t),\eta\rangle \, {\rm d}t$ for every
$s>0$ and $\eta\in X$.

On the other hand, assumption (C3), together with \cite[Proposition 3.6.2]{Cauchy}, implies that $\displaystyle\lim_{\varepsilon \to 0+}\| (sI+A_\varepsilon)^{-1} \psi_0-(sI+A_0)^{-1} \psi_0\|=0$ for every $s>0$.
Therefore, we have
$$
\langle (sI+A_0)^{-1} \psi_0,\eta\rangle = \int_0^\infty {\rm e}^{-s t}\langle \widetilde \varphi_0(t),\eta\rangle \,
{\rm d}t\qquad(s>0,\ \eta\in X).
$$
The last formula implies that
$$
 \int_0^\infty {\rm e}^{-s t}\langle  \varphi_0(t),\eta\rangle= \int_0^\infty {\rm e}^{-s t}\langle \widetilde
 \varphi_0(t),\eta\rangle \, {\rm d}t\qquad(s>0,\ \eta\in X),
$$
which, as well as the injectivity of the Laplace transform, yields that that $\widetilde \varphi_0=\varphi_0$.
This, along with \eqref{Alao}, yields  (\ref{to prove 2.1.2}). Thus, using \eqref{to prove 2.1.1} we obtain our first assertion.

To prove the second assertion, we notice that, since $(\mathbb{T}^\varepsilon)_{\varepsilon\geqslant 0}$
are contraction semigroups, we have
$$
\|\varphi_\varepsilon(t)-\varphi_0(t)\|\leqslant \|\psi_\varepsilon-\psi_0\|+\|\mathbb{T}_t^\varepsilon\psi_0-\mathbb{T}_t^0\psi_0\| \qquad(\varepsilon,\ t\geqslant 0).
$$
The first term of the above formula obviously tends to zero. To show that the same property holds for the second term,
 it suffices to apply the Trotter-Kato theorem.

To prove the third assertion in the proposition, we introduce $X_{-1}$, the dual space of $X_1$ with respect to the pivot
space $X$ and we denote by $\|\cdot\|_{-1}$ the norm of this space.
It is known (see, for instance, \cite[Section 2.10]{obsbook}) that, for every $\varepsilon\geqslant 0$,
the operator $A_\varepsilon$ can be extended to an operator in ${\cal L}(X,X_{-1})$ which
generates an analytic contraction semigroup on $X_{-1}$, extending $\mathbb{T^\varepsilon}$. The extended semigroup, still denoted by  $\mathbb{T}^\varepsilon$, satisfies
$\langle \mathbb{T}^\varepsilon_t \psi,\eta\rangle_{X_{-1},X_1} = \langle  \psi,\mathbb{T}^\varepsilon_t \eta\rangle$ for every $t\geqslant 0$, $\psi\in X_{-1}$ and $ \eta\in X_1$.
The compactness of the embedding $X_1\subset X$ clearly implies that of  the embedding $X\subset X_{-1}$,
so that, up to the extraction of a subsequence, $\displaystyle\lim_{\varepsilon\to 0+}\|\psi_\varepsilon-\psi\|_{-1}=0$.
According to classical properties of analytic contraction semigroups (see, for instance, \cite[Theorem 3.7.19]{Cauchy}) we have
\begin{eqnarray}\label{put_it_0}
\mathbb{T}_t^\varepsilon \in \mathcal{L}(X_{-1},X),\qquad \|\mathbb{T}_{t}^\varepsilon \|_{\mathcal{L}(X_{-1},X)} \leqslant t^{-1} \qquad(\varepsilon\geqslant 0,\ t>0).
\end{eqnarray}

Let us note that
\begin{equation}\label{hh3_new}
\| \mathbb{T}_{\tau_\varepsilon}^\varepsilon \psi_\varepsilon
- \mathbb{T}_{\tau_0}^0 \psi_0\| \leqslant \| \mathbb{T}_{\tau_\varepsilon}^\varepsilon \psi_\varepsilon
- \mathbb{T}_{\tau_\varepsilon}^\varepsilon \psi_0\| + \| \mathbb{T}_{\tau_\varepsilon}^\varepsilon \psi_0
- \mathbb{T}_{\tau_\varepsilon}^0 \psi_0\| + \| \mathbb{T}_{\tau_\varepsilon}^0 \psi_0 - \mathbb{T}_{\tau_0}^0 \psi_0\|.
\end{equation}
For the first term on the right hand side of \eqref{hh3_new}, we use \eqref{put_it_0} to get
$$
 \| \mathbb{T}_{\tau_\varepsilon}^\varepsilon \psi_\varepsilon - \mathbb{T}_{\tau_\varepsilon}^\varepsilon \psi_0\| \leqslant  \|  \mathbb{T}_{\tau_\varepsilon}^\varepsilon \|_{\mathcal{L}(X_{-1},X)} \|\psi_\varepsilon-\psi_0\|_{-1} \leqslant \frac{\|\psi_\varepsilon-\psi_0\|_{-1}} {\tau_\varepsilon},
$$
so that this term clearly converges to $0$.
To see that the second term on the right hand side of \eqref{hh3_new} converge to $0$,
it suffices to apply the second assertion of this proposition. Finally, the last term on
the right hand side of \eqref{hh3_new} converge to $0$, because of the continuity of $t\mapsto \mathbb{T}_t$. Thus, the third assertion holds, which ends the proof.
\qquad \end{proof}

\

\begin{proposition}\label{cons_abs_1_bis}
With the notation and assumptions in Proposition \ref{cons_abs_1}, let $\tau>0$ and
let $(u_\varepsilon)_{\varepsilon \geqslant 0}$
be a family of functions in $L^\infty((0,\infty);U)$.
Set, for each $\varepsilon,t\geqslant 0$,
$w_{\varepsilon}(t)= \Phi_t^\varepsilon u_\varepsilon$,
where $\Phi_t^\varepsilon$ is defined by \eqref{fieps}.
Then the following assertions hold:

\noindent 1. If \ $\displaystyle u_\varepsilon \to u_0$\  weakly$^*$ in $ L^\infty((0,\infty);U)$, \ then \
$\displaystyle w_\varepsilon \to w_0$  weakly$^*$ in $L^\infty((0,\infty);X)$.\\

\noindent 2. If $\displaystyle u_\varepsilon \to u_0$\ weakly$^*$ in $ L^\infty((0,\infty);U)$, then \
$\displaystyle w_\varepsilon(\tau) \to w_0(\tau)$ weakly in $X$.\\

\noindent 3. If  $u_\varepsilon\to u_0$ strongly in $L^\infty((0,\infty);U)$, then
$\displaystyle w_\varepsilon \to w_0$ in $C([0,\tau];X)$.
\end{proposition}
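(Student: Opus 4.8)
The plan is to prove the three assertions in parallel with those of Proposition~\ref{cons_abs_1}, using the same ingredients: the self-adjointness of each $A_\varepsilon$, which lets one move $\mathbb{T}_t^\varepsilon$ across the inner product and thus reduce everything to the behaviour of $B^*\mathbb{T}_t^\varepsilon$; the Trotter--Kato theorem, which under (C1)--(C3) gives $\mathbb{T}_t^\varepsilon x\to\mathbb{T}_t^0 x$ for every $x\in X$ and every $t\geqslant 0$, locally uniformly in $t$; and the uniform bounds \eqref{unif_decay}, which supply the dominating functions for the dominated convergence theorem. Since a weakly$^*$ convergent sequence in $L^\infty$ is norm bounded, in the first two assertions I argue along an arbitrary sequence $\varepsilon_n\to 0+$ and set $M:=\sup_n\|u_{\varepsilon_n}\|_{L^\infty((0,\infty);U)}<\infty$.

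For assertion~1, fix $v\in L^1((0,\infty);X)$. Using $(\mathbb{T}_t^\varepsilon)^*=\mathbb{T}_t^\varepsilon$ and Fubini's theorem (justified by \eqref{unif_decay} and $v\in L^1$), I rewrite
\begin{equation*}
\int_0^\infty\langle w_\varepsilon(t),v(t)\rangle\,{\rm d}t=\int_0^\infty\langle u_\varepsilon(\sigma),g_\varepsilon(\sigma)\rangle_U\,{\rm d}\sigma,\qquad g_\varepsilon(\sigma):=B^*\!\int_0^\infty\mathbb{T}_r^\varepsilon v(\sigma+r)\,{\rm d}r.
\end{equation*}
Two applications of dominated convergence---first in $r$ over $(0,t)$, then in $t$, after the change of variables $t=\sigma+r$ and using the majorant $\int_0^t\|(\mathbb{T}_r^\varepsilon-\mathbb{T}_r^0)v(t)\|\,{\rm d}r\leqslant 2\alpha_0^{-1}\|v(t)\|$ coming from \eqref{unif_decay}---show that $g_\varepsilon\to g_0$ strongly in $L^1((0,\infty);U)$. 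Splitting $\langle u_\varepsilon,g_\varepsilon\rangle=\langle u_\varepsilon,g_\varepsilon-g_0\rangle+\langle u_\varepsilon-u_0,g_0\rangle+\langle u_0,g_0\rangle$, bounding the first pairing by $M\|g_\varepsilon-g_0\|_{L^1}\to 0$ and the second by the weak$^*$ convergence, one gets assertion~1. Assertion~2 is the same computation localised at time $\tau$: for $\eta\in X$,
\begin{equation*}
\langle w_\varepsilon(\tau),\eta\rangle-\langle w_0(\tau),\eta\rangle=\int_0^\tau\langle u_\varepsilon(\sigma),B^*(\mathbb{T}_{\tau-\sigma}^\varepsilon-\mathbb{T}_{\tau-\sigma}^0)\eta\rangle_U\,{\rm d}\sigma+\int_0^\tau\langle u_\varepsilon(\sigma)-u_0(\sigma),B^*\mathbb{T}_{\tau-\sigma}^0\eta\rangle_U\,{\rm d}\sigma,
\end{equation*}
where the first term is at most $M\,\|B\|_{{\cal L}(U,X)}\int_0^\tau\|(\mathbb{T}_r^\varepsilon-\mathbb{T}_r^0)\eta\|\,{\rm d}r\to 0$ by dominated convergence, and the second tends to $0$ since $\sigma\mapsto B^*\mathbb{T}_{\tau-\sigma}^0\eta$ belongs to $L^1((0,\tau);U)$.

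The main obstacle is assertion~3, the point being that strong convergence of the semigroups does \emph{not} give $\|\mathbb{T}_s^\varepsilon-\mathbb{T}_s^0\|_{{\cal L}(X)}\to 0$, so the Duhamel term cannot be controlled uniformly in $t$ by a crude estimate. I split $w_\varepsilon(t)-w_0(t)=\Phi_t^\varepsilon(u_\varepsilon-u_0)+(\Phi_t^\varepsilon-\Phi_t^0)u_0$; the first summand is $\leqslant K\|u_\varepsilon-u_0\|_{L^\infty((0,\infty);U)}$ for every $t$ by \eqref{unif_decay}. For the second, let $v:=Bu_0$ on $(0,\tau)$ and $v:=0$ on $[\tau,\infty)$, so that $v\in L^1((0,\infty);X)$ and $(\Phi_t^\varepsilon-\Phi_t^0)u_0=\int_0^t(\mathbb{T}_{t-\sigma}^\varepsilon-\mathbb{T}_{t-\sigma}^0)v(\sigma)\,{\rm d}\sigma$ for $t\in[0,\tau]$. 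Given $\delta>0$, pick $\widetilde v$ continuous with compact support in $(0,\infty)$ and $\|v-\widetilde v\|_{L^1}<\delta$; since the $\mathbb{T}^\varepsilon$ are contractions, replacing $v$ by $\widetilde v$ changes the integral by at most $2\delta$, uniformly in $t$ and $\varepsilon$. Now $\mathcal{K}:=\overline{\widetilde v((0,\infty))}\cup\{0\}$ is compact in $X$, so $\rho_\varepsilon(s):=\sup_{x\in\mathcal{K}}\|(\mathbb{T}_s^\varepsilon-\mathbb{T}_s^0)x\|\to 0$ for each $s\geqslant 0$ (a uniformly bounded, strongly convergent family of operators converges uniformly on compact sets), while $\rho_\varepsilon(s)\leqslant 2\sup_{x\in\mathcal{K}}\|x\|$; hence, after the substitution $r=t-\sigma$,
\begin{equation*}
\sup_{t\in[0,\tau]}\left\|\int_0^t(\mathbb{T}_{t-\sigma}^\varepsilon-\mathbb{T}_{t-\sigma}^0)\widetilde v(\sigma)\,{\rm d}\sigma\right\|\leqslant\int_0^\tau\rho_\varepsilon(r)\,{\rm d}r,
\end{equation*}
and the right-hand side tends to $0$ by dominated convergence. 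Letting $\delta\to 0$ gives $(\Phi_\cdot^\varepsilon-\Phi_\cdot^0)u_0\to 0$ in $C([0,\tau];X)$, whence assertion~3. The delicate step is exactly this passage from the pointwise-in-$t$ to the uniform-in-$t$ convergence of the Duhamel term, which the reduction to a continuous, compactly supported forcing---and the resulting $t$-independent bound $\int_0^\tau\rho_\varepsilon$---resolves.
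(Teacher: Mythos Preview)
Your proof is correct, and for two of the three assertions it takes a genuinely different---and arguably cleaner---route than the paper's.

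For assertion~1, the paper splits $w_\varepsilon-w_0=(w_\varepsilon-\widetilde w_\varepsilon)+(\widetilde w_\varepsilon-w_0)$ with $\widetilde w_\varepsilon(t)=\Phi_t^\varepsilon u_0$. The first half is handled exactly as you do, via Fubini and the strong $L^1$ convergence of $g_\varepsilon$; but for the second half the paper invokes Alaoglu to extract a weak$^*$ limit and then \emph{identifies} this limit through a Laplace-transform argument, using $\int_0^\infty e^{-st}\widetilde w_\varepsilon(t)\,{\rm d}t=(sI+A_\varepsilon)^{-1}B\widehat u_0(s)$ together with resolvent convergence. Your approach avoids the Laplace transform entirely: by writing $\int\langle w_\varepsilon,v\rangle=\int\langle u_\varepsilon,g_\varepsilon\rangle_U$ directly (rather than only for the difference), a single splitting $\langle u_\varepsilon,g_\varepsilon\rangle=\langle u_\varepsilon,g_\varepsilon-g_0\rangle+\langle u_\varepsilon-u_0,g_0\rangle+\langle u_0,g_0\rangle$ handles both halves at once. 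This is more elementary and also makes the dominating function for $\|g_\varepsilon-g_0\|_{L^1}$ explicit via the double-integral bound by $2\alpha_0^{-1}\|B^*\|\,\|v\|_{L^1}$.

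For assertion~3 the paper is quite terse: it writes the same decomposition $\Phi_t^\varepsilon(u_\varepsilon-u_0)+(\Phi_t^\varepsilon-\Phi_t^0)u_0$ and disposes of the second term in one line by citing ``the Trotter--Kato theorem and the dominated convergence theorem'', which as stated yields only pointwise-in-$t$ convergence. Your argument---approximating $Bu_0$ in $L^1$ by a continuous compactly supported $\widetilde v$, passing to the compact range $\mathcal{K}$, and bounding by the $t$-independent quantity $\int_0^\tau\rho_\varepsilon(r)\,{\rm d}r$---is the standard way to upgrade this to uniform convergence on $[0,\tau]$, and makes the step rigorous. Assertion~2 is essentially the same in both proofs.
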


\

\begin{proof}
For each $\varepsilon,t\geqslant 0$ we set $\widetilde w_\varepsilon (t)=
\Phi_t^\varepsilon u_0\ \ (t\geqslant 0)$.
We  first claim that
\begin{equation}\label{faible_prel_bis}
\displaystyle \lim_{\varepsilon \to 0+}(\widetilde w_\varepsilon-w_\varepsilon)=0 \qquad\hbox{\rm weakly* in }\quad\ L^\infty((0,\infty);X).
\end{equation}
Indeed, for each $v\in L^1((0,\infty);U)$ and $\varepsilon\geqslant 0$, we deduce from Fubini's theorem that
\begin{align}\label{insiruire}
 \int_0^\infty \left\langle\widetilde w_\varepsilon(t)- w_\varepsilon(t),v(t)  \right\rangle\, {\rm d}t &= \int_0^\infty \left\langle \int_0^t \mathbb{T}_{t-\sigma}^\varepsilon B \left( u_\varepsilon(\sigma)-u_0(\sigma) \right)\, {\rm d}\sigma, v(t) \right\rangle\, {\rm d}t \notag
\\
&\ = \int_0^\infty \int_\sigma^\infty \left\langle \mathbb{T}_{t-\sigma}^\varepsilon B \left( u_\varepsilon(\sigma)-u_0(\sigma) \right) , v(t) \right\rangle\, {\rm d}t\, {\rm d}\sigma \notag
\\
& = \int_0^\infty  \left\langle   u_\varepsilon(\sigma)-u_0(\sigma) , \int_\sigma^\infty B^* \mathbb{T}_{t-\sigma}^\varepsilon  v(t)\, {\rm d}t\right\rangle\,  {\rm d}\sigma.
\end{align}
For each $\varepsilon, \sigma \geqslant 0$, we set $g_\varepsilon(\sigma): =\int_\sigma^\infty B^* \mathbb{T}_{t-\sigma}^\varepsilon  v(t)\, {\rm d}t$.
By the assumptions (C1)-(C3), we can apply the dominated convergence theorem to get that
$$
\displaystyle \lim_{\varepsilon \to 0+} g_{\varepsilon}(\sigma) =g_0(\sigma) =\int_\sigma^\infty B^* \mathbb{T}_{t-\sigma}^0
v(t)\, {\rm d}t \qquad(\sigma\geqslant 0).
$$
Moreover, using the first estimate in \eqref{unif_decay}, we have that for each $\sigma\geqslant 0$
$$
\| g_\varepsilon(\sigma)-g_0(\sigma) \|  \leqslant 2
\|B^*\|_{{\cal L}(X,U)} \, {\rm e}^{\alpha_0 \sigma}\,  \int_\sigma^\infty {\rm e}^{-\alpha_0 t} \|v(t) \| \, {\rm d}t \leqslant 2 \|B^*\|_{{\cal L}(X,U)} \ \|v\|_{L^1((0,\infty);U)}.
$$
From these, we can apply  the dominated convergence theorem again to obtain  that
$g_\varepsilon \to g_0$ strongly in $L^1((0,\infty);U)$.
This, combined with \eqref{insiruire}, implies  \eqref{faible_prel_bis}.

We next show
\begin{equation}\label{wu2.10}
\widetilde w_\varepsilon\to w_0 \qquad\hbox{\rm weakly* in }\quad L^\infty((0,\infty);X).
\end{equation}
By the second estimate in \eqref{unif_decay}, we have that $(\widetilde w_\varepsilon)_{\varepsilon>0}$ is bounded in $L^\infty((0,\infty);X)$.
 Hence,
up to the extraction of a subsequence, we have that
\begin{equation}\label{conv_prel_bis}
\displaystyle\lim_{\varepsilon \to 0+}\widetilde w_\varepsilon = \widetilde w_0 \qquad\hbox{\rm weakly* in }\quad L^\infty((0,\infty);X),
\end{equation}
for some $\widetilde w_0 \in L^\infty((0,\infty);X)$.
The above convergence implies that
\begin{equation}\label{wu2.12}
\displaystyle\lim_{\varepsilon \to 0+}\int_0^\infty {\rm e}^{-s t}\langle \widetilde w_\varepsilon(t),\eta\rangle \, {\rm d}t
 = \int_0^\infty {\rm e}^{-s t}\langle \widetilde w_0(t),\eta\rangle \, {\rm d}t \qquad(s>0,\ \eta\in X).
\end{equation}
Two observations are given in order. Firstly, by a classical result (see, for instance, \cite[Remark 4.1.9]{obsbook}),
we have
$$
\int_0^\infty {\rm e}^{-s t} \widetilde w_\varepsilon(t)\, {\rm d}t =
(sI+A_\varepsilon)^{-1} B \widehat u_0(s) \qquad(s>0),
$$
where $\widehat u_0$ stands for the Laplace transform of $u_0$. Therefore, it follows that
\begin{equation}\label{wu2.13}
\displaystyle\lim_{\varepsilon \to 0+}\left\langle (sI+A_\varepsilon)^{-1} B\widehat u_0(s),\eta\right\rangle
= \int_0^\infty {\rm e}^{-s t}\langle \widetilde w_0(t),\eta\rangle \, {\rm d}t\qquad(s>0,\ \eta\in X).
\end{equation}
Secondly,  assumption (C3), together with \cite[Proposition 3.6.2]{Cauchy}, implies that
\begin{equation}\label{wu2.14}
\displaystyle\lim_{\varepsilon\to 0+}\left\| (sI+A_\varepsilon)^{-1} B\widehat u_0(s)-(sI+A_0)^{-1} B\widehat u_0(s)\right\|=0 \qquad(s>0).
\end{equation}
From (\ref{wu2.12})-(\ref{wu2.14}), it follows that
$$
\left\langle (sI+A_0)^{-1} B\widehat u_0(s),\eta\right\rangle = \int_0^\infty {\rm e}^{-s t}\langle \widetilde w_0(t),\eta\rangle \, {\rm d}t\qquad(s>0,\ \eta\in X).
$$
The last formula implies that
$$
 \int_0^\infty {\rm e}^{-s t}\langle  w_0(t),\eta\rangle= \int_0^\infty {\rm e}^{-s t}\langle \widetilde w_0(t),\eta\rangle \,
 {\rm d}t\qquad(s>0,\ \eta\in X).
$$
This, along with  the injectivity of the Laplace transform, yields that  $\widetilde w_0=w_0$, which leads to  \eqref{wu2.10}.
 Combining this and \eqref{faible_prel_bis}, we obtain our first assertion.

The proof of the second assertion follows from the proof of (\ref{faible_prel_bis}) where we take $v(t)\equiv \eta$
 for arbitrarily fixed $\eta\in X$.

To prove the third assertion of the proposition, it suffices to note that
\begin{equation} \label{to prove}
\|w_\varepsilon(t)-w_0(t)\| \leqslant \|\Phi_t^\varepsilon(u_\varepsilon-u_0) \| + \|(\Phi_t^\varepsilon-\Phi_t^0 ) u_0 \|.
\end{equation}
The first term on the right hand side of \eqref{to prove} clearly converges to $0$ because of the second estimate in \eqref{unif_decay}.
The second term on the right hand side of \eqref{to prove} clearly converges to $0$ according to the Trotter-Kato theorem
and the dominated convergence theorem. This ends the proof.\qquad \end{proof}

From the two above propositions, we clearly have the following consequences:

\

\begin{corollary}\label{clasica_1}
With the notation and assumptions in Proposition \ref{cons_abs_1}, let $(\psi_\varepsilon)_{\varepsilon \geqslant 0}$ and $(u_\varepsilon)_{\varepsilon \geqslant 0}$ be two families in  $X$ and $L^\infty((0,\infty);U)$, respectively.
For each $\varepsilon,t\geqslant 0$ we set
$z_\varepsilon(t)=\mathbb{T}_{t}^\varepsilon \psi_\varepsilon + \Phi_t^\varepsilon u_\varepsilon$. Then the following assertions hold:

\noindent 1. If $\displaystyle \psi_\varepsilon\to \psi_0$ weakly in $X$ and
 $\displaystyle u_\varepsilon\to u_0$ weakly* in  $L^\infty((0,\infty);X)$ then
$\displaystyle z_\varepsilon\to z_0$ weakly* in \  $L^\infty((0,\tau);X)$ and, for each $\tau\geqslant 0$, we have $\displaystyle  z_\varepsilon(\tau) \to z_0(\tau)$ weakly in $X$.\\

\noindent 2. If $\displaystyle \psi_\varepsilon\to \psi_0$ strongly in $X$ and
 $\displaystyle u_\varepsilon\to u_0$ strongly in  $L^\infty((0,\infty);U)$, then for each $\tau\geqslant 0$, we have
 $ \displaystyle  z_\varepsilon \to z_0$ in $C([0,\tau];X)$.
\end{corollary}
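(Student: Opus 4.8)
The plan is to exploit the decomposition $z_\varepsilon(t)=\varphi_\varepsilon(t)+w_\varepsilon(t)$, where $\varphi_\varepsilon(t)=\mathbb{T}_t^\varepsilon\psi_\varepsilon$ and $w_\varepsilon(t)=\Phi_t^\varepsilon u_\varepsilon$ are exactly the quantities treated in Propositions \ref{cons_abs_1} and \ref{cons_abs_1_bis}. Both convergence assertions in the corollary are linear in $(\psi_\varepsilon,u_\varepsilon)$, so each one will follow by simply adding the corresponding conclusions of those two propositions.

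For the first assertion I would argue as follows. Assertion 1 of Proposition \ref{cons_abs_1} gives $\varphi_\varepsilon\to\varphi_0$ weakly$^*$ in $L^\infty((0,\infty);X)$, and assertion 1 of Proposition \ref{cons_abs_1_bis} gives $w_\varepsilon\to w_0$ weakly$^*$ in $L^\infty((0,\infty);X)$; summing these, and restricting to the interval $(0,\tau)$, yields $z_\varepsilon\to z_0$ weakly$^*$ in $L^\infty((0,\tau);X)$. For the pointwise statement, fix $\tau\geqslant 0$. If $\tau>0$, apply assertion 3 of Proposition \ref{cons_abs_1} with the constant sequence $\tau_\varepsilon\equiv\tau$ to get $\varphi_\varepsilon(\tau)\to\varphi_0(\tau)$ strongly, hence weakly, in $X$, and use assertion 2 of Proposition \ref{cons_abs_1_bis} to get $w_\varepsilon(\tau)\to w_0(\tau)$ weakly in $X$; adding, $z_\varepsilon(\tau)\to z_0(\tau)$ weakly in $X$. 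The case $\tau=0$ is immediate, since $z_\varepsilon(0)=\psi_\varepsilon\to\psi_0=z_0(0)$ weakly by hypothesis. For the second assertion, under the strong convergence hypotheses, assertion 2 of Proposition \ref{cons_abs_1} gives $\varphi_\varepsilon\to\varphi_0$ in $C([0,\tau];X)$ and assertion 3 of Proposition \ref{cons_abs_1_bis} gives $w_\varepsilon\to w_0$ in $C([0,\tau];X)$; adding them in the normed space $C([0,\tau];X)$ gives $z_\varepsilon\to z_0$ there.

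Since every ingredient is already in hand, there is no genuine obstacle here. The only point that deserves a moment's attention is that, in the first assertion, the pointwise-in-$\tau$ weak convergence should be extracted from assertion 3 of Proposition \ref{cons_abs_1} (with $\tau_\varepsilon$ taken constant), and not from its assertion 1, because weak$^*$ convergence in $L^\infty((0,\tau);X)$ does not by itself control the trace at the endpoint $\tau$.
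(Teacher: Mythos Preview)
Your proposal is correct and is exactly the approach the paper takes: the paper simply states that the corollary ``clearly'' follows from Propositions \ref{cons_abs_1} and \ref{cons_abs_1_bis}, and your decomposition $z_\varepsilon=\varphi_\varepsilon+w_\varepsilon$ together with the correct matching of assertions is precisely the intended argument. Your remark about using assertion 3 of Proposition \ref{cons_abs_1} (with constant $\tau_\varepsilon$) rather than its assertion 1 for the pointwise weak limit is a useful clarification that the paper leaves implicit.
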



\section{Some background  on time and norm optimal control problems}
\label{sec_back_time}
\setcounter{equation}{0}

\

In this section we recall some basic results on time and norm optimal controls for linear,
time invariant infinite dimensional systems. We give, in particular, versions of Pontryagin's maximum principle for time and norm optimal control problems in the case when the target is a closed ball in a Hilbert space. These kind of results are widely discussed in the literature, in a very general context, for  linear and nonlinear equations with various target sets. We refer, for instance, to Fattorini \cite{fattorini_old_book,Fattorini_old} and \cite{liyong}. However, for the sake of completeness, we give in Appendix a short proof adapted to the particular situation considered in this work. Moreover, we state the analogous results for the associated norm optimal control problem.

Throughout this section,  $X$ and $U$ are real Hilbert spaces, $\mathbb{T}=(\mathbb{T})_{t\geqslant 0}$ is a $C^0$
semigroup of linear operators on $X$, with the  generator $-A$, and $B\in \mathcal{L}(U,X)$ is a bounded control operator.
For every $t \geqslant 0,\ \psi\in X$ and $u\in L^{\infty}((0,\infty);U)$, we set
\begin{equation}\label{used_notation}
z(t;\psi,u):= \mathbb{T}_t\psi+\Phi_t u\quad \mbox{\rm with }\quad
\Phi_t u := \int_0^t \mathbb{T}_{t-\sigma}  B u(\sigma)\, {\rm d}\sigma.
\end{equation}
It is well-known that $z$ is the state trajectory of the system $(\Sigma)$, described by
$$
\dot z(t)+A z(t)=Bu(t) \qquad (t\geqslant 0),\qquad
\qquad z(0)=\psi\in X.  \leqno{(\Sigma)}
$$
Fix $r>0$ and  $\psi\not \in {B}(0,r)$. Consider the following time optimal  control problems:
\begin{equation}\label{minimal time func}
(TP)^M\qquad \qquad \tau^*(M):= \displaystyle\min_{u \in \cal{U}_M} \left\lbrace t \ |\ z(t;\psi,u)\in B(0,r)\right\rbrace\ \ (M>0),
\end{equation}
 where $\mathcal{U}_M:=\left\lbrace u \ | \ u \in L^{\infty}((0,\infty);U)\quad \mbox{\rm s.t. }
\|u\|_{ L^{\infty}((0,\infty);U)} \leqslant M \right\rbrace$.

\

\begin{theorem}\label{bb}
With the above notation and assumptions, let $M>0$ and  suppose that $\psi \not \in B(0,r)$.
Let $(\tau^*(M),u^*)$ be a solution of the time optimal control problem $(TP)^M$. Then
 $u^*$ satisfies the Pontryagin's maximum principle, i.e., there exists $ \eta  \in X$, with $ \eta \not= 0$, such that for almost every  $t\in (0,\tau^*(M))$, we have
\begin{equation}\label{PM}
\ \left\langle  u^*(t),B^* \mathbb{T}_{\tau^*(M)-t}^* \eta \right\rangle_U
= \displaystyle\max_{v\in U,\; \|v\|_U \leqslant M}\ \left\langle  v,B^* \mathbb{T}_{\tau^*(M)-t}^* \eta \right\rangle_U.
\end{equation}
Moreover, we have the following  transversality condition: $\eta= -z(\tau^*(M);\psi,u^*)$.
\end{theorem}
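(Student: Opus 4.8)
The plan is to prove this via a standard convex-geometric / separation argument applied to the reachable set, exploiting that the target $B(0,r)$ is a ball. First I would set $\tau^* = \tau^*(M)$ and consider the optimal state $z^* = z(\tau^*;\psi,u^*)$, which by optimality must lie on the boundary of the target, i.e. $\|z^*\| = r$ (if it were strictly inside, by continuity of $t \mapsto z(t;\psi,u^*)$ one could reach $B(0,r)$ slightly earlier, contradicting minimality of $\tau^*$; here one uses that $\psi \notin B(0,r)$ so $\tau^* > 0$). Next, introduce the reachable set at time $\tau^*$ from $\psi$ using controls in $\mathcal{U}_M$,
$$
\mathcal{R}_{\tau^*} := \left\{ \mathbb{T}_{\tau^*}\psi + \Phi_{\tau^*} u \ \big|\ u \in \mathcal{U}_M \right\} \subset X,
$$
which is convex (since $\mathcal{U}_M$ is convex and $\Phi_{\tau^*}$ is affine-linear in $u$) and, because $B$ is bounded and $\mathbb{T}$ is a $C^0$-semigroup on a Hilbert space with the embedding considerations, one can check it is also closed (indeed weakly compact, via boundedness of $\mathcal{U}_M$ in $L^\infty$, weak-$*$ compactness, and continuity of $\Phi_{\tau^*}$ for the weak-$*$ to weak topology). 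The key geometric claim is that the open ball $B(0,r)^\circ$ and $\mathcal{R}_{\tau^*}$ have disjoint interiors in the relevant sense — more precisely, that $z^*$ is a point where these two convex sets "touch" — because if a relatively interior point of $\mathcal{R}_{\tau^*}$ lay in the open ball, a short-time perturbation argument would again contradict minimality of $\tau^*$.

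From this I would invoke the Hahn–Banach separation theorem to produce a nonzero $\eta \in X$ separating $\mathcal{R}_{\tau^*}$ from $B(0,r)$, i.e.
$$
\langle \zeta, \eta \rangle \ \leqslant\ \langle w, \eta \rangle \qquad \big(\zeta \in B(0,r),\ w \in \mathcal{R}_{\tau^*}\big),
$$
with equality achieved at $\zeta = w = z^*$. Optimizing the left-hand side over the ball $B(0,r)$ forces the supporting functional direction to be $\eta = -z^*/\|z^*\| \cdot r$ up to a positive scalar, i.e. $\eta$ is a negative multiple of $z^*$; rescaling $\eta$ (separation determines it only up to a positive constant) gives precisely the transversality condition $\eta = -z(\tau^*(M);\psi,u^*)$. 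Then I would unwind the inequality $\langle z^*, \eta\rangle \leqslant \langle \mathbb{T}_{\tau^*}\psi + \Phi_{\tau^*} u, \eta\rangle$ for all $u \in \mathcal{U}_M$: subtracting the term $\langle \mathbb{T}_{\tau^*}\psi + \Phi_{\tau^*} u^*, \eta\rangle = \langle z^*, \eta\rangle$ and writing $\langle \Phi_{\tau^*} u, \eta \rangle = \int_0^{\tau^*} \langle u(\sigma), B^* \mathbb{T}^*_{\tau^*-\sigma}\eta\rangle_U\, {\rm d}\sigma$ (by the adjoint/Fubini identity as in \eqref{insiruire}), one obtains
$$
\int_0^{\tau^*} \big\langle u^*(\sigma) - u(\sigma),\, B^*\mathbb{T}^*_{\tau^*-\sigma}\eta \big\rangle_U\, {\rm d}\sigma \ \geqslant\ 0 \qquad (u \in \mathcal{U}_M).
$$
A routine pointwise-in-time localization (choosing $u$ to differ from $u^*$ only on a small set, and letting that set shrink, using a Lebesgue-point argument) upgrades this integral inequality to the pointwise maximum-principle identity \eqref{PM} for a.e. $t \in (0,\tau^*(M))$.

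The main obstacle I expect is the topological/geometric step: establishing that $\mathcal{R}_{\tau^*}$ is genuinely closed (or compact) so that separation applies cleanly, and more delicately, justifying that $z^*$ is a true touching point — that no interior point of $\mathcal{R}_{\tau^*}$ penetrates the open target ball. This requires the small-time controllability/perturbation argument: given a putative interior point $w$ of $\mathcal{R}_{\tau^*}$ with $\|w\| < r$, one steers from $w$ (reached at time $\tau^*$) and uses continuity of the trajectory together with the fact that $w$ is interior to conclude one could have hit $B(0,r)$ before $\tau^*$, contradicting optimality. If $\mathcal{R}_{\tau^*}$ happens to have empty interior (which can occur in infinite dimensions), one instead separates $B(0,r)^\circ$, which does have nonempty interior, from $\mathcal{R}_{\tau^*}$ directly — the nonemptiness of the ball's interior is exactly what rescues the separation argument and is why the ball-shaped target is essential here. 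Everything else — convexity of $\mathcal{R}_{\tau^*}$, the adjoint identity, and the Lebesgue-point localization — is standard.
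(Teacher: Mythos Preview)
Your approach is essentially the same as the paper's: define the reachable set $\mathcal{R}_{\tau^*}$, show it is disjoint from the open ball, apply Hahn--Banach separation, and read off both \eqref{PM} and the transversality condition. Two simplifications remove the obstacle you flagged: first, the paper shows directly that $\mathcal{R}_{\tau^*}\cap \mathring B(0,r)=\emptyset$ (not merely that interiors are disjoint) by exactly the continuity argument you already used for $\|z^*\|=r$, applied to an arbitrary $\tilde u\in\mathcal{U}_M$ --- so no closedness, compactness, or ``interior point of $\mathcal{R}_{\tau^*}$'' discussion is needed; second, since $\mathring B(0,r)$ is open, the geometric Hahn--Banach theorem separates it from the convex set $\mathcal{R}_{\tau^*}$ without any topological hypothesis on the latter. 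Also watch your signs: with your inequality $\langle\zeta,\eta\rangle\leqslant\langle w,\eta\rangle$ for $\zeta\in B(0,r)$, $w\in\mathcal{R}_{\tau^*}$, the ball is on the low side, so $z^*$ \emph{maximizes} $\langle\cdot,\eta\rangle$ over $B(0,r)$ and $\eta$ comes out as a \emph{positive} multiple of $z^*$; to match the statement $\eta=-z^*$ and the maximum in \eqref{PM} you must flip the separating hyperplane (the paper's convention puts $\mathcal{R}_{\tau^*}$ on the low side).
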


\

The proof of Theorem \ref{bb} is given in Appendix for the sake of completeness of the paper.
From Theorem \ref{bb}, we clearly have the following consequence:

\

\begin{corollary}\label{bb_cor}
With the notation and assumptions in Theorem \ref{bb}, suppose
\begin{equation}\label{uni-cont-3.5}
\eta\in X\;\;\mbox{\rm and}\;\;B^*\mathbb{T}_t^* \eta=0\;\;\mbox{\rm for all}\;\;t\;\;\mbox{\rm in a set of positive measure}\;
\Longrightarrow \;\eta=0.
\end{equation}
  Then the following assertions hold:\\
\noindent 1. Any  optimal control $u^*$ to the problem  $(TP)^M$  has the  bang-bang property, i.e., it satisfies that
 $\|u^*(t)\|=M$  for almost every $t\in (0, \tau^*(M))$.\\
2. The optimal control to the problem $(TP)^M$, if exists, is unique.
\end{corollary}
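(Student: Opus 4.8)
The plan is to deduce both assertions from the maximum principle in Theorem \ref{bb} together with the uniqueness hypothesis \eqref{uni-cont-3.5}. For the first assertion, let $u^*$ be an optimal control. By Theorem \ref{bb} there exists $\eta \in X$, $\eta \neq 0$, such that the pointwise maximization \eqref{PM} holds for a.e.\ $t \in (0,\tau^*(M))$. For a fixed $t$, if the vector $w(t) := B^*\mathbb{T}^*_{\tau^*(M)-t}\eta \in U$ is nonzero, then the unique maximizer of $v \mapsto \langle v, w(t)\rangle_U$ over the closed ball $\{\|v\|_U \leqslant M\}$ is $v = M\, w(t)/\|w(t)\|_U$, which has norm exactly $M$; hence $\|u^*(t)\|_U = M$. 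So it remains to show that $w(t) \neq 0$ for a.e.\ $t \in (0,\tau^*(M))$. Suppose not: then the set $E := \{ t \in (0,\tau^*(M)) : B^*\mathbb{T}^*_{\tau^*(M)-t}\eta = 0\}$ has positive measure. Writing $s = \tau^*(M) - t$, this says $B^*\mathbb{T}^*_s \eta = 0$ for $s$ in a set of positive measure, so by hypothesis \eqref{uni-cont-3.5} we get $\eta = 0$, contradicting $\eta \neq 0$. This proves the bang-bang property.

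For the second assertion, suppose $u_1^*$ and $u_2^*$ are both optimal controls for $(TP)^M$; they share the same optimal time $\tau^* := \tau^*(M)$. Consider $u^* := \tfrac12(u_1^* + u_2^*)$. Since $\mathcal{U}_M$ is convex, $u^* \in \mathcal{U}_M$, and by linearity of $z(\tau^*; \psi, \cdot)$ in the control we have $z(\tau^*; \psi, u^*) = \tfrac12 z(\tau^*;\psi,u_1^*) + \tfrac12 z(\tau^*;\psi,u_2^*)$, which lies in $B(0,r)$ since $B(0,r)$ is convex; hence $u^*$ is itself an optimal control. Applying the bang-bang property just established to $u_1^*$, $u_2^*$ and $u^*$, we get $\|u_1^*(t)\|_U = \|u_2^*(t)\|_U = \|u^*(t)\|_U = M$ for a.e.\ $t \in (0,\tau^*)$. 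The strict convexity of the norm on the Hilbert space $U$ then forces $u_1^*(t) = u_2^*(t)$ for a.e.\ such $t$: indeed $\|\tfrac12(u_1^*(t)+u_2^*(t))\|_U = M = \tfrac12\|u_1^*(t)\|_U + \tfrac12\|u_2^*(t)\|_U$ is the equality case in the triangle inequality for vectors of equal norm, which in a strictly convex space implies $u_1^*(t) = u_2^*(t)$.

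The only genuinely delicate point is verifying that the transversality vector $\eta$ from Theorem \ref{bb} cannot be annihilated by $B^*\mathbb{T}^*_{\cdot}$ on a positive-measure set; but this is exactly the content of hypothesis \eqref{uni-cont-3.5}, so no real obstacle arises — the argument is a routine combination of the maximum principle, convexity of the data, and strict convexity of the Hilbert norm. One should only be slightly careful that the maximizer formula $v = M w(t)/\|w(t)\|_U$ requires $M > 0$, which is assumed, and that all the pointwise statements hold only almost everywhere, which is harmless.
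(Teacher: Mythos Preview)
Your proof is correct and follows the standard route. The paper does not actually give a proof of this corollary (it just says the result follows ``clearly'' from Theorem~\ref{bb}), so your argument---maximum principle plus the unique-continuation hypothesis for bang-bang, then averaging two optimal controls and invoking strict convexity of the Hilbert norm for uniqueness---is exactly the intended one.
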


\

Our study on time optimal control problems is related to the following norm optimal control problems, defined for every $\tau >0$ by
\begin{equation}\label{minimal norm func}
(NP)^\tau \quad  N^*(\tau) := \displaystyle\min_{f \in L^{\infty}((0,\tau);U)} \left\lbrace \|f\|_{L^{\infty}((0,\tau);U)}\ | \
 \; z(\tau;\psi,f)\in B(0,r)\right\rbrace.
\end{equation}

\noindent The Pontryagin maximum principle for problems  $((NP)^\tau)_{\tau >0}$ states as follows:

\

\begin{theorem}\label{bb_n}
With the notation and assumptions in Theorem \ref{bb}, let $\tau>0$ and
 assume that $\|\mathbb{T}_\tau\psi\|>r$. Let $\hat f$ be a norm optimal control to problem $(NP)^\tau$.
Then, it satisfies the Pontryagin maximum principle, i.e., there exists $ \eta  \in X$ with $\eta \not= 0$, such that for almost every $t\in (0,\tau)$, we have:
\begin{equation}\label{PM_n}
\ \left\langle \hat f(t),B^* \mathbb{T}^*_{\tau-t} \eta \right\rangle_U = \displaystyle\max_{v\in U,\;
 \|v\|_U \leqslant N^*(\tau)}\ \left\langle  v,B^* \mathbb{T}^*_{\tau-t} \eta \right\rangle_U.
\end{equation}
Moreover, we have the transversality condition: $\eta= -z(\tau;\psi, \hat f)$.
\end{theorem}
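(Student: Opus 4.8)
The plan is to treat $(NP)^\tau$ as a convex minimization problem and to deduce \eqref{PM_n} from a first-order optimality condition, in the spirit of the proof of Theorem~\ref{bb}.

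First I would extract the multiplier and the transversality condition. Since $\|\mathbb{T}_\tau\psi\|>r$, the control $f\equiv 0$ is not admissible for $(NP)^\tau$, hence $N^*(\tau)>0$ and $\|\hat f\|_{L^\infty((0,\tau);U)}=N^*(\tau)>0$. I then claim that $\|z(\tau;\psi,\hat f)\|=r$. Admissibility of $\hat f$ gives $\|z(\tau;\psi,\hat f)\|\leqslant r$, and if this inequality were strict, then for $\varepsilon\in(0,1)$ the control $f_\varepsilon:=(1-\varepsilon)\hat f$ would satisfy
\[
z(\tau;\psi,f_\varepsilon)=\mathbb{T}_\tau\psi+(1-\varepsilon)\Phi_\tau\hat f=(1-\varepsilon)\,z(\tau;\psi,\hat f)+\varepsilon\,\mathbb{T}_\tau\psi,
\]
which converges to $z(\tau;\psi,\hat f)$ as $\varepsilon\to0^+$; hence $z(\tau;\psi,f_\varepsilon)\in B(0,r)$ for $\varepsilon$ small, while $\|f_\varepsilon\|_{L^\infty((0,\tau);U)}=(1-\varepsilon)N^*(\tau)<N^*(\tau)$, contradicting the optimality of $\hat f$. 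Setting $\eta:=-z(\tau;\psi,\hat f)$ we obtain $\|\eta\|=r$, so $\eta\neq0$, and the transversality condition holds by construction.

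Next I would turn the norm optimality into a minimization of the terminal norm. For every $f\in L^\infty((0,\tau);U)$ with $\|f\|_{L^\infty((0,\tau);U)}<N^*(\tau)$ the control $f$ is inadmissible, i.e.\ $z(\tau;\psi,f)\notin B(0,r)$, so $\|z(\tau;\psi,f)\|>r$; applying this to $(1-\tfrac{1}{n})f$, letting $n\to\infty$ and using the continuity of $f\mapsto\Phi_\tau f$, we get $\|z(\tau;\psi,f)\|\geqslant r$ for every $f$ in the closed ball $\mathcal V$ of radius $N^*(\tau)$ in $L^\infty((0,\tau);U)$. Together with the previous step, $\hat f$ minimizes the convex, differentiable functional $J(f):=\|\mathbb{T}_\tau\psi+\Phi_\tau f\|^2$ over the convex set $\mathcal V$, with minimal value $r^2$. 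The associated variational inequality $J'(\hat f)(f-\hat f)\geqslant0$ for all $f\in\mathcal V$, combined with the adjoint identity $\langle\zeta,\Phi_\tau h\rangle=\int_0^\tau\langle B^*\mathbb{T}^*_{\tau-\sigma}\zeta,h(\sigma)\rangle_U\,{\rm d}\sigma$ evaluated at $\zeta=z(\tau;\psi,\hat f)=-\eta$, yields
\[
\int_0^\tau\langle B^*\mathbb{T}^*_{\tau-\sigma}\eta,\hat f(\sigma)\rangle_U\,{\rm d}\sigma\ \geqslant\ \int_0^\tau\langle B^*\mathbb{T}^*_{\tau-\sigma}\eta,f(\sigma)\rangle_U\,{\rm d}\sigma\qquad(f\in\mathcal V).
\]

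Finally I would conclude by a pointwise maximization argument. Choosing $f$ with $f(\sigma)$ maximizing $v\mapsto\langle v,B^*\mathbb{T}^*_{\tau-\sigma}\eta\rangle_U$ over $\{v\in U:\|v\|_U\leqslant N^*(\tau)\}$ for a.e.\ $\sigma$ — such a measurable selection exists because $\sigma\mapsto B^*\mathbb{T}^*_{\tau-\sigma}\eta$ is continuous on $[0,\tau]$ — shows that the left-hand side of the displayed inequality dominates $\int_0^\tau\max_{\|v\|_U\leqslant N^*(\tau)}\langle v,B^*\mathbb{T}^*_{\tau-\sigma}\eta\rangle_U\,{\rm d}\sigma$, whereas the reverse pointwise estimate holds a.e.\ because $\|\hat f(\sigma)\|_U\leqslant N^*(\tau)$; hence the two integrands coincide for a.e.\ $t\in(0,\tau)$, which is precisely \eqref{PM_n}. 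The argument is essentially a convex-duality computation and I do not anticipate a genuine obstacle; the two places that need a little care are the reduction to a terminal-norm minimization — where the closedness of $B(0,r)$ and the continuity of $f\mapsto z(\tau;\psi,f)$ enter — and the measurable-selection step, both of which are routine in this Hilbert-space setting.
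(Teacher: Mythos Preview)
Your proof is correct but follows a different line from the paper's. Both arguments start from the same key observation, namely that no control $f$ with $\|f\|_{L^\infty}\leqslant N^*(\tau)$ can produce a terminal state in the open ball $\mathring B(0,r)$; you phrase this as $\|z(\tau;\psi,f)\|\geqslant r$ on the closed control ball $\mathcal V$, while the paper phrases it as ${\cal R}_\tau\cap\mathring B(0,r)=\emptyset$ for the reachable set ${\cal R}_\tau=\mathbb{T}_\tau\psi+\Phi_\tau\mathcal V$. From there the paper applies the geometric Hahn--Banach theorem to separate ${\cal R}_\tau$ from $\mathring B(0,r)$, obtaining the multiplier $\eta$ as the normal of the separating hyperplane and deducing both \eqref{PM_n} and the transversality condition from the two separation inequalities. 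You instead \emph{define} $\eta:=-z(\tau;\psi,\hat f)$ (so transversality is built in), recognize that $\hat f$ minimizes the smooth convex functional $f\mapsto\|z(\tau;\psi,f)\|^2$ over $\mathcal V$, and extract the maximum principle from the first-order variational inequality together with a pointwise maximizing selection. Your route is slightly more hands-on and avoids invoking Hahn--Banach, at the cost of the measurable-selection step; the paper's route is shorter and treats Theorems~\ref{bb} and~\ref{bb_n} by the same separation argument, with the transversality condition emerging a posteriori from the inequality $\langle\eta,w\rangle\geqslant\rho$ for $w\in B(0,r)$.
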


For the sake of completeness, a short proof of Theorem \ref{bb_n} will be  given in Appendix  of the paper. From Theorem \ref{bb_n},
 it easily follows

\

\begin{corollary}\label{bb_n_cor}
With the same notation and assumptions in Theorem \ref{bb_n}, suppose that \eqref{uni-cont-3.5} holds.
Then the following statements hold: \\
\noindent 1. Any optimal control $\hat f$ to the problem $(NP)^\tau$ has the bang-bang property, i.e., it  satisfies that $\|\hat f(t)\|=N^*(\tau) $ for a.e. $t\in (0, \tau)$.\\
\noindent 2. The optimal control to the problem $(NP)^\tau$, if exists, is unique.
\end{corollary}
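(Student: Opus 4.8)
The plan is to derive both assertions from the Pontryagin maximum principle already stated in Theorem~\ref{bb_n}, so the argument will be short. First I would record a preliminary observation: the standing hypothesis $\|\mathbb{T}_\tau\psi\|>r$ means that the zero control is inadmissible for $(NP)^\tau$, since $z(\tau;\psi,0)=\mathbb{T}_\tau\psi\notin B(0,r)$; hence $N^*(\tau)>0$. This is what will prevent the bang-bang value below from being trivial.

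For assertion~1, let $\hat f$ be any optimal control. By Theorem~\ref{bb_n} there exists $\eta\in X$ with $\eta\neq 0$ such that for a.e. $t\in(0,\tau)$
$$
\langle \hat f(t),B^*\mathbb{T}^*_{\tau-t}\eta\rangle_U=\max_{\|v\|_U\leqslant N^*(\tau)}\langle v,B^*\mathbb{T}^*_{\tau-t}\eta\rangle_U=N^*(\tau)\,\bigl\|B^*\mathbb{T}^*_{\tau-t}\eta\bigr\|_U.
$$
The key step is to show that $B^*\mathbb{T}^*_{\tau-t}\eta\neq 0$ for a.e. $t\in(0,\tau)$. After the measure-preserving change of variable $s=\tau-t$, the set $\{t\in(0,\tau):B^*\mathbb{T}^*_{\tau-t}\eta=0\}$ has the same Lebesgue measure as $\{s\in(0,\tau):B^*\mathbb{T}^*_s\eta=0\}$; were the latter of positive measure, assumption~\eqref{uni-cont-3.5} would force $\eta=0$, contradicting $\eta\neq 0$. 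Consequently, for a.e. $t$ the functional $v\mapsto\langle v,B^*\mathbb{T}^*_{\tau-t}\eta\rangle_U$ attains its maximum over $\{\|v\|_U\leqslant N^*(\tau)\}$ at the unique point $N^*(\tau)\,B^*\mathbb{T}^*_{\tau-t}\eta/\|B^*\mathbb{T}^*_{\tau-t}\eta\|_U$, whence
$$
\hat f(t)=N^*(\tau)\,\frac{B^*\mathbb{T}^*_{\tau-t}\eta}{\|B^*\mathbb{T}^*_{\tau-t}\eta\|_U}\qquad(t\in(0,\tau)\ {\rm a.e.}),
$$
and in particular $\|\hat f(t)\|_U=N^*(\tau)$ for a.e. $t\in(0,\tau)$, which is the bang-bang property.

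For assertion~2 I would argue by convexity. If $\hat f_1,\hat f_2$ are both optimal, then by linearity of $f\mapsto z(\tau;\psi,f)$ and convexity of $B(0,r)$ the control $\tfrac12(\hat f_1+\hat f_2)$ is admissible with $\|\tfrac12(\hat f_1+\hat f_2)\|_{L^\infty((0,\tau);U)}\leqslant N^*(\tau)$, hence it is itself optimal. Applying assertion~1 to $\hat f_1$, $\hat f_2$ and $\tfrac12(\hat f_1+\hat f_2)$ gives $\|\hat f_1(t)\|_U=\|\hat f_2(t)\|_U=\|\tfrac12(\hat f_1(t)+\hat f_2(t))\|_U=N^*(\tau)$ for a.e. $t$; since $U$ is a Hilbert space, expanding $\|a+b\|_U^2$ shows that $\|a\|_U=\|b\|_U=\|\tfrac12(a+b)\|_U$ forces $a=b$ (the equality case in Cauchy--Schwarz), so $\hat f_1=\hat f_2$ in $L^\infty((0,\tau);U)$.

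I do not expect any real obstacle here: the statement is a routine corollary of Theorem~\ref{bb_n}. The only points needing a little attention are the measure-preserving change of variables bridging the form of hypothesis~\eqref{uni-cont-3.5} and the function $t\mapsto B^*\mathbb{T}^*_{\tau-t}\eta$ appearing in the maximum principle, and the remark that the hypothesis $\|\mathbb{T}_\tau\psi\|>r$ is precisely what excludes the degenerate possibility $N^*(\tau)=0$; all the genuine work is already contained in Theorem~\ref{bb_n}, which is assumed.
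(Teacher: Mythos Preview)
Your proof is correct and follows exactly the standard route the paper has in mind; indeed the paper does not spell out a proof of this corollary, stating only that it ``easily follows'' from Theorem~\ref{bb_n}, and what you have written is precisely that easy deduction.
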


To further study the norm optimal control problem $(NP)^\tau$, with $\tau>0$, it is useful to introduce an auxiliary
 minimization problem. This has been remarked in the case of the heat equations
in Fabre {\it et al} \cite{Fabre}. More precisely, we introduce  the following minimization problem:
\begin{equation}\label{minimization func}
 (JP)^\tau  \qquad \qquad V^*(\tau) :=\displaystyle \min_{\eta\in X} \left\lbrace J^\tau(\eta)\right\rbrace,
\end{equation}
where
$$
J^\tau(\eta) = \frac{1}{2} \left(\int_0^\tau \|B^*\mathbb{T}^*_{\tau-t} \eta\|_U \, {\rm d}t \right)^2
+ \left\langle \psi,\mathbb{T}^*_\tau \eta\right\rangle+r \|\eta\| \qquad\qquad(\eta\in X).
$$
We first give in the following lemma some properties concerning the problem $(JP)^\tau$.

\

\begin{lemma}\label{JT}
 With the same notation and assumptions in Theorem \ref{bb_n}, let $\tau>0$. Moreover, assume that the pair $(A^*,B^*)$ is approximatively observable in any time.
Then the following assertions hold:\\
\noindent 1. The functional $J^\tau$ is  continuous, strictly convex and coercive on $X$.\\
\noindent 2. The functional $J^\tau$ has a unique minimizer over $X$.\\
\noindent 3. $\mathbb{T}_\tau \psi \not \in B(0,r)$  if and only if zero is not the minimizer of $J^\tau$.\\
\end{lemma}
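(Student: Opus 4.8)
The plan is to split $J^\tau$ into its three natural pieces and dispatch each by soft arguments, reserving the quantitative work for coercivity and for the equivalence in assertion~3. Throughout, write $L(\eta):=\int_0^\tau\|B^*\mathbb{T}^*_{\tau-t}\eta\|_U\,{\rm d}t$ and use $\langle\psi,\mathbb{T}^*_\tau\eta\rangle=\langle\mathbb{T}_\tau\psi,\eta\rangle$, so that $J^\tau(\eta)=\tfrac12 L(\eta)^2+\langle\mathbb{T}_\tau\psi,\eta\rangle+r\|\eta\|$. Since $\mathbb{T}$ is a $C^0$-semigroup, $s\mapsto\|\mathbb{T}^*_s\|_{{\cal L}(X)}$ is bounded on $[0,\tau]$, so $L$ is a finite seminorm on $X$, in fact Lipschitz; hence $J^\tau$ is continuous. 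Convexity is immediate: $L$ is convex, $s\mapsto\tfrac12 s^2$ is convex and nondecreasing on $[0,\infty)$ so $\tfrac12 L(\cdot)^2$ is convex, the middle term is affine, and $r\|\cdot\|$ is convex. The one observation I will reuse is that approximate observability in time $\tau$ makes $L$ a genuine norm: $L(\eta)=0$ forces $B^*\mathbb{T}^*_{\tau-t}\eta=0$ for a.e.\ $t\in(0,\tau)$, hence $\eta=0$.

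For strict convexity, suppose $\eta_1\neq\eta_2$ and the convexity inequality for $J^\tau$ is an equality at some $\lambda\in(0,1)$. Adding the three separate convexity inequalities shows equality must hold in each; in particular $\|\lambda\eta_1+(1-\lambda)\eta_2\|=\lambda\|\eta_1\|+(1-\lambda)\|\eta_2\|$. Since the norm of a Hilbert space is strictly convex, this forces $\eta_1$ and $\eta_2$ to lie on a common ray $\{sv:s\geqslant0\}$ from the origin with $v\neq0$ (as $\eta_1\neq\eta_2$). Writing $\eta_i=s_iv$ with $s_1\neq s_2$, the equality for the first term becomes, after dividing by $L(v)^2>0$, the identity $(\lambda s_1+(1-\lambda)s_2)^2=\lambda s_1^2+(1-\lambda)s_2^2$, which is false. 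Hence $J^\tau$ is strictly convex. I expect this to be the main obstacle, since it is the one step where the Hilbert-space geometry and the observability hypothesis genuinely combine; everywhere else only convexity and boundedness of the semigroup are used.

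Coercivity is the other point needing an argument. Let $\|\eta_n\|\to\infty$ and put $\hat\eta_n=\eta_n/\|\eta_n\|$. Along any subsequence there are two cases. If $\liminf_n L(\hat\eta_n)>0$, then $\tfrac12 L(\eta_n)^2=\tfrac12\|\eta_n\|^2 L(\hat\eta_n)^2$ eventually dominates the remaining terms (which are $O(\|\eta_n\|)$) and $J^\tau(\eta_n)\to\infty$. Otherwise $L(\hat\eta_n)\to0$ along a further subsequence, along which (by reflexivity) $\hat\eta_n\rightharpoonup\hat\eta$; weak lower semicontinuity of the convex continuous functional $L$ gives $L(\hat\eta)=0$, hence $\hat\eta=0$, so $\langle\mathbb{T}_\tau\psi,\hat\eta_n\rangle\to0$ and $J^\tau(\eta_n)\geqslant\bigl(r+\langle\mathbb{T}_\tau\psi,\hat\eta_n\rangle\bigr)\|\eta_n\|\geqslant\tfrac{r}{2}\|\eta_n\|\to\infty$ for $n$ large. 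In all cases $J^\tau(\eta_n)\to\infty$, proving assertion~1.

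Assertion~2 follows by the direct method: $J^\tau$ is convex and strongly continuous, hence weakly lower semicontinuous on the reflexive space $X$; coercivity forces minimizing sequences to be bounded, a weakly convergent subsequence yields a minimizer, and strict convexity makes it unique. For assertion~3, note $J^\tau(0)=0$, so by assertion~2 the minimizer equals $0$ if and only if $J^\tau\geqslant0$ on $X$. If $\mathbb{T}_\tau\psi\in B(0,r)$, i.e.\ $\|\mathbb{T}_\tau\psi\|\leqslant r$, then for every $\eta$
\[
J^\tau(\eta)\geqslant\langle\mathbb{T}_\tau\psi,\eta\rangle+r\|\eta\|\geqslant\bigl(r-\|\mathbb{T}_\tau\psi\|\bigr)\|\eta\|\geqslant0,
\]
so $0$ is the minimizer. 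Conversely, if $\|\mathbb{T}_\tau\psi\|>r$, set $\zeta=-\mathbb{T}_\tau\psi/\|\mathbb{T}_\tau\psi\|$; then
\[
J^\tau(s\zeta)=s\bigl(r-\|\mathbb{T}_\tau\psi\|\bigr)+\tfrac{s^2}{2}L(\zeta)^2<0
\]
for $s>0$ sufficiently small, so $0$ is not the minimizer. These two implications give the stated equivalence.
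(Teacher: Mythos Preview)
Your proof is correct and follows essentially the same route as the paper: the paper declares continuity and strict convexity ``obvious'' and defers coercivity to a later proposition (whose argument is precisely your normalized-sequence-plus-observability dichotomy), while for assertion~3 both you and the paper use the same scaling/directional-derivative idea---the paper passes to the limit in $J^\tau(\lambda\eta)/\lambda\geqslant 0$, you instead exhibit the explicit direction $\zeta=-\mathbb{T}_\tau\psi/\|\mathbb{T}_\tau\psi\|$. Your write-up simply supplies the details the paper omits, including a clean strict-convexity argument combining the Hilbert-space norm geometry with the fact that $L$ is a genuine norm under approximate observability.
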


\begin{proof}
The continuity and the strict convexity of the functional  $J^\tau$ are obvious. The coercivity of $J^\tau$  can be verified by the exactly same way used in \cite[Ch.4.2]{Zuazua}, using in an essential way the approximate observability of $(A^*,B^*)$.  Moreover, since a stronger version of this coercive property will be proved in Proposition \ref{coercivity} below, we omit here the proof.

To prove the second assertion, we first check that
\begin{equation}\label{put_a_number}
\mathbb{T}_{\tau} \psi \not \in B(0,r)\; \Rightarrow \; \mbox{\rm 0 is not the minimizer of}\; J^{\tau}.
\end{equation}
Indeed, assume by contradiction that $\mathbb{T}_\tau \psi \not \in B(0,r) $, but $0$  were the minimizer of $J^\tau$. Then we would have that for every $\eta$ in $X$, $J^\tau(\lambda\eta)/\lambda\geqslant J^\tau(0)/\lambda=0$. Passing to the limit for $\lambda\rightarrow 0$ in the later, we have
 that $|\left\langle \mathbb{T}_\tau \psi, \eta \right\rangle | \leqslant r \|\eta \|
$ for all $\eta \in X$. Thus, $\mathbb{T}_\tau \psi \in B(0,r)$, which leads to a contradiction. Hence \eqref{put_a_number} holds.

Conversely, we show that if $\mathbb{T}_\tau \psi \in B(0,r)$, then $0$ is the minimizer of $J^\tau$.
For this purpose, we note that
if $\mathbb{T}_\tau \psi \in B(0,r)$, then $ |\left\langle \psi,\mathbb{T}^*_\tau \eta \right\rangle |\leqslant r \|\eta \|$ for all $\eta \in X$. Hence, for any $\;\eta \in X$, it is clear that $
J^\tau(\eta) \geqslant \left\langle \psi, \mathbb{T}^*_{\tau}\eta\right\rangle +r \|\eta \| \geqslant 0 =J^\tau(0)$.
This implies that $0$ is the minimizer of $J^\tau$ and ends the proof. \qquad \end{proof}

The following  proposition gives an explicit formula of the norm optimal control in terms of the minimizer of $J^\tau$.

\

\begin{proposition}\label{JT and NT}
With the same notation and assumptions in Theorem \ref{bb_n}, suppose that $\mathbb{T}_\tau \psi \not \in B(0,r)$ and that property \eqref{uni-cont-3.5} holds. Let $\widehat \eta$ be the minimizer of $J^\tau$. Then we have: \\
\noindent 1. The function $\widehat{f}$  defined by
\begin{eqnarray}\label{bb control}
\widehat{f}(t) = \left( \int_0^\tau \|B^*\mathbb{T}^*_{\tau-s}\widehat \eta \|_{U}\, {\rm d}s  \right) \frac{B^*\mathbb{T}^*_{\tau-t}\widehat \eta}{\| B^*\mathbb{T}^*_{\tau-t}\widehat \eta\|_{U}} \qquad\qquad(t\in(0,\tau)\ {\rm a.e.}),
\end{eqnarray}
is the norm optimal control to the problem $(NP)^\tau$.\\
\noindent 2. $N^*(\tau)= \displaystyle \int_0^\tau \| B^*\mathbb{T}^*_{\tau-t}\widehat \eta\|_{U}\, {\rm d}t$, where $N^*(\tau)$ is defined by \eqref{minimal norm func}.\\
\noindent 3. $V^*(\tau)=\displaystyle -\frac{1}{2} {N^*(\tau)}^2$, where $V^*(\tau)$ is defined by \eqref{minimization func}.
\end{proposition}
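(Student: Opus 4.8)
The plan is to exploit the first-order optimality condition (Euler--Lagrange equation) for the minimizer $\widehat\eta$ of the convex functional $J^\tau$, and then to verify directly that the function $\widehat f$ defined by \eqref{bb control} is admissible and optimal. First I would compute the Gateaux derivative of $J^\tau$ at $\widehat\eta$. Writing $L(\eta):=\int_0^\tau\|B^*\mathbb{T}^*_{\tau-t}\eta\|_U\,{\rm d}t$, the directional derivative of $\tfrac12 L(\eta)^2$ in direction $h$ is $L(\widehat\eta)\int_0^\tau\big\langle \tfrac{B^*\mathbb{T}^*_{\tau-t}\widehat\eta}{\|B^*\mathbb{T}^*_{\tau-t}\widehat\eta\|_U},\,B^*\mathbb{T}^*_{\tau-t}h\big\rangle_U\,{\rm d}t$ (here the assumption \eqref{uni-cont-3.5}, equivalently $\widehat\eta\neq 0$ by Lemma~\ref{JT}(3) and approximate observability, guarantees $B^*\mathbb{T}^*_{\tau-t}\widehat\eta\neq 0$ for a.e.\ $t$, so the integrand is well defined). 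The derivative of the linear term is $\langle\psi,\mathbb{T}^*_\tau h\rangle=\langle\mathbb{T}_\tau\psi,h\rangle$, and the term $r\|\eta\|$ is differentiable at $\widehat\eta\neq 0$ with derivative $r\langle \widehat\eta/\|\widehat\eta\|,h\rangle$. Recognizing that $\int_0^\tau \mathbb{T}_{\tau-t}B\widehat f(t)\,{\rm d}t=\Phi_\tau\widehat f$, the stationarity condition $DJ^\tau(\widehat\eta)h=0$ for all $h\in X$ becomes
\begin{equation}\label{eq:el}
\langle \Phi_\tau\widehat f,h\rangle + \langle\mathbb{T}_\tau\psi,h\rangle + r\Big\langle \frac{\widehat\eta}{\|\widehat\eta\|},h\Big\rangle = 0\qquad(h\in X),
\end{equation}
that is, $z(\tau;\psi,\widehat f)=\mathbb{T}_\tau\psi+\Phi_\tau\widehat f = -r\,\widehat\eta/\|\widehat\eta\|$, which in particular lies on the boundary of $B(0,r)$; so $\widehat f$ steers $\psi$ into the target at time $\tau$ and the transversality relation $\eta=-z(\tau;\psi,\widehat f)$ holds (with $\eta$ a positive multiple of $\widehat\eta$). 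This proves admissibility of $\widehat f$.

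Next I would prove optimality of $\widehat f$ and the value formulas, simultaneously. Note $\|\widehat f(t)\|_U = L(\widehat\eta)=\int_0^\tau\|B^*\mathbb{T}^*_{\tau-s}\widehat\eta\|_U\,{\rm d}s$ for a.e.\ $t$, a constant; call it $N$. Let $f$ be any admissible control for $(NP)^\tau$, i.e.\ $z(\tau;\psi,f)\in B(0,r)$. Testing the duality pairing with $\widehat\eta$:
\begin{equation}\label{eq:duality}
\langle z(\tau;\psi,f),\widehat\eta\rangle = \langle\mathbb{T}_\tau\psi,\widehat\eta\rangle + \int_0^\tau\langle f(t),B^*\mathbb{T}^*_{\tau-t}\widehat\eta\rangle_U\,{\rm d}t.
\end{equation}
On one hand $\langle z(\tau;\psi,f),\widehat\eta\rangle\geqslant -r\|\widehat\eta\|$ since $z(\tau;\psi,f)\in B(0,r)$; on the other hand the integral is bounded above by $\|f\|_{L^\infty((0,\tau);U)}\,L(\widehat\eta)$. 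Combining, $\|f\|_{L^\infty((0,\tau);U)}\,L(\widehat\eta)\geqslant -\langle\mathbb{T}_\tau\psi,\widehat\eta\rangle - r\|\widehat\eta\|$. The right-hand side is exactly $-2J^\tau(\widehat\eta)+L(\widehat\eta)^2 = -2V^*(\tau)+N^2$, but I can get the clean statement more directly: evaluating the Euler--Lagrange identity \eqref{eq:el} at $h=\widehat\eta$ gives $\langle\Phi_\tau\widehat f,\widehat\eta\rangle + \langle\mathbb{T}_\tau\psi,\widehat\eta\rangle + r\|\widehat\eta\| = 0$, and $\langle\Phi_\tau\widehat f,\widehat\eta\rangle = \int_0^\tau\langle\widehat f(t),B^*\mathbb{T}^*_{\tau-t}\widehat\eta\rangle_U\,{\rm d}t = L(\widehat\eta)^2 = N^2$ by construction of $\widehat f$. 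Hence $-\langle\mathbb{T}_\tau\psi,\widehat\eta\rangle - r\|\widehat\eta\| = N^2$, so $\|f\|_{L^\infty((0,\tau);U)}\,N\geqslant N^2$, i.e.\ $\|f\|_{L^\infty((0,\tau);U)}\geqslant N$ (using $N>0$, which follows from $\mathbb{T}_\tau\psi\notin B(0,r)$ and approximate observability). Since $\|\widehat f\|_{L^\infty((0,\tau);U)}=N$ and $\widehat f$ is admissible, $\widehat f$ is a norm optimal control and $N^*(\tau)=N$, which is assertion 2. For assertion 3, substitute $L(\widehat\eta)=N^*(\tau)$ and the relation $\langle\mathbb{T}_\tau\psi,\widehat\eta\rangle + r\|\widehat\eta\| = -N^*(\tau)^2$ into $J^\tau(\widehat\eta) = \tfrac12 L(\widehat\eta)^2 + \langle\psi,\mathbb{T}^*_\tau\widehat\eta\rangle + r\|\widehat\eta\| = \tfrac12 N^*(\tau)^2 - N^*(\tau)^2 = -\tfrac12 N^*(\tau)^2$.

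The main technical obstacle is the differentiation of the term $\tfrac12 L(\eta)^2$, where $L(\eta)=\int_0^\tau\|B^*\mathbb{T}^*_{\tau-t}\eta\|_U\,{\rm d}t$: one must justify differentiating under the integral sign and handle the non-smoothness of $v\mapsto\|v\|_U$ at $v=0$. This is exactly where the hypothesis \eqref{uni-cont-3.5} (approximate observability in any time, which via Lemma~\ref{JT}(3) forces $\widehat\eta\neq 0$) is essential: it ensures $B^*\mathbb{T}^*_{\tau-t}\widehat\eta\neq 0$ for a.e.\ $t\in(0,\tau)$, so that near $\widehat\eta$ the integrand $\eta\mapsto\|B^*\mathbb{T}^*_{\tau-t}\eta\|_U$ is smooth for a.e.\ $t$, with derivative dominated by $\|B^*\mathbb{T}^*_{\tau-t}\|$ uniformly, legitimizing the dominated-convergence argument. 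A secondary point is that $\widehat f$ as defined is only specified a.e.\ and one should check $\widehat f\in L^\infty((0,\tau);U)$ with $\|\widehat f(t)\|_U=L(\widehat\eta)$ a.e., which is immediate from formula \eqref{bb control}. Everything else — the duality pairing \eqref{eq:duality}, admissibility, and the value identities — is routine bookkeeping once the Euler--Lagrange equation \eqref{eq:el} is in hand.
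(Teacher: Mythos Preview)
Your proof is correct and follows essentially the same route as the paper: derive the Euler--Lagrange equation for $J^\tau$ at $\widehat\eta$ (using $\widehat\eta\neq 0$ from Lemma~\ref{JT} and \eqref{uni-cont-3.5} to make $\widehat f$ well defined), read off admissibility $z(\tau;\psi,\widehat f)=-r\widehat\eta/\|\widehat\eta\|$, and then establish optimality by the same duality computation, testing the Euler--Lagrange identity at $h=\widehat\eta$ to get $-\langle\mathbb{T}_\tau\psi,\widehat\eta\rangle-r\|\widehat\eta\|=N^2$ and comparing with an arbitrary admissible $f$. Your treatment is in fact a bit more careful than the paper's in justifying the Gateaux differentiability of $\eta\mapsto L(\eta)^2$ and in checking $N>0$.
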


\

\begin{proof}
Since the uniqueness of the optimal control to the problem $(NP)^\tau$ has been proved in Corollary \ref{bb_n_cor}, we only need to show that $\hat f$, defined by (\ref{bb control}), is an optimal control to the problem $(NP)^\tau$.
For this purpose, we first use  Lemma \ref{JT} to get  that $\widehat \eta\neq 0$. Then it follows from the assumption \eqref{uni-cont-3.5} that $B^*\mathbb{T}^*_{\tau-t} \widehat \eta\neq 0$ for every $t\in [0,\tau)$. Consequently, $\widehat{f}$ is well defined.

We next  prove that $\widehat{f}$ is an admissible control to Problem  $(NP)^\tau$, which means that  $\|z(\tau;\psi,\widehat f)\|\leqslant r$. To this end, we observe that the Euler-Lagrange equation, associated with
 the minimizer $\widehat \eta$ of $J^\tau$, reads:
\begin{eqnarray}
\int_0^\tau \left\langle \widehat{f},B^*\mathbb{T}^*_{\tau-t}\eta\right\rangle_U\, {\rm d}t  +\left\langle \psi,\mathbb{T}^*_\tau\eta\right\rangle + r \left\langle {\widehat \eta}/{\|\widehat \eta\|},\eta\right\rangle=0 \qquad(\eta \in X). \label{EL}
\end{eqnarray}
This, along with \eqref{used_notation}, yields that $\|z(\tau;\psi, \widehat{f})\| \leqslant r$.

We next show the optimality of $\widehat{f}$ to the problem $(NP)^\tau$. First of all, we denote
$\varphi(t)=B^*\mathbb{T}^*_{\tau-t}\widehat\eta$ for all $t\in [0,\tau]$. Then for each  $v\in L^\infty((0,\tau); U)$, we clearly have
\begin{equation*}
\int_0^\tau \langle\widehat{f}(t), \varphi(t) \rangle_U\, {\rm d}t - \langle z(\tau;\psi,\widehat{f}), \widehat \eta\, \rangle =
\int_0^\tau \langle v(t), \varphi(t)\rangle_U\, {\rm d}t
 - \langle z(\tau;\psi,v), \widehat \eta\, \rangle.
\end{equation*}
This, together with (\ref{EL}) with $\eta=\widehat\eta$, yields that when  $v\in L^\infty((0,\tau); U)$, we have
$$
\|\widehat{f} \|_{L^\infty((0,\tau);U)}^2=\int_0^\tau \left\langle v(t),  \varphi(t)  \right\rangle_U\, {\rm d}t - \left\langle z(\tau;\psi,v), \widehat \eta\, \right\rangle -r \|\widehat \eta\|.
$$
Since $\|\widehat f(t)\|_U=\|\varphi(t)\|_U$ for every $t\in [0,\tau)$, the above equality implies that when $v$ is admissible to the problem $(NP)^\tau$, we have
$$
\|\widehat{f} \|_{L^\infty((0,\tau);U)}\leqslant\|v \|_{L^\infty((0,\tau);U)}.
$$
Hence,  $\widehat{f}$ is the optimal control to $(NP)^\tau$, which ends the proof of our second assertion.

The last assertion follows from \eqref{EL}, with  $\eta=\widehat \eta$, and from \eqref{minimization func}.
 \qquad \end{proof}

\section{On the relation of time and norm optimal control problems}
\label{sec_relation}\setcounter{equation}{0}

\

In this section, we present some relationships between time and norm optimal control problems.
We use the same notation and assumptions on $X,\ U,\ A,\ \mathbb{T},\ (\Phi_t)_{t\geqslant 0},\ B$ as in the previous section. In particular, for every $t \geqslant 0,\ \psi\in X$ and $u\in L^{\infty}((0,\infty);U)$, we set
$$
z(t;\psi,u):= \mathbb{T}_t\psi+\Phi_t u\quad \mbox{\rm with }\quad
\Phi_t u := \int_0^t \mathbb{T}_{t-\sigma}  B u(\sigma)\, {\rm d}\sigma.
$$
Moreover, we assume that $-A:{\cal D}(A)\to X$ generates a contraction semigroup  $\mathbb{T}$. For fixed $r>0$ and $\psi\in X$ we denote
\begin{equation}\label{TAU0}
\hat \tau=\begin{cases}
\displaystyle\min_{t>0}  \{t \ \ | \ \ \|\mathbb{T}_t\psi\|\leqslant r\} & \ \hbox{\rm if }\ \  \{t \ \ | \ \ \|\mathbb{T}_t\psi\|\leqslant r\}\neq \emptyset\\
+\infty & \ \hbox{\rm if } \ \ \{t \ \ | \ \ \|\mathbb{T}_t\psi\|\leqslant r\} = \emptyset.
\end{cases}
\end{equation}
We first present some properties of functions $\tau^*(\cdot)$ and $N^*(\cdot)$ defined by
 (\ref{minimal time func}) and (\ref{minimal norm func}), respectively.

\

\begin{proposition}\label{NT}  Let $r>0$ and $\psi \in X\setminus B(0,r)$.
Assume that for every $\tau>0$, we have
\begin{equation}\label{OBSL1}
K(\tau)\int_0^\tau \|B^*\mathbb{T}^*_t \eta\|_U \, {\rm d}t\geqslant \|\mathbb{T}^*_\tau\eta\| \qquad
\qquad(\eta\in X),
\end{equation}
where $K(\cdot): (0,\infty)\rightarrow (0, \infty)$ is continuous.
Let $\widehat{\tau}$ be the quantity defined in \eqref{TAU0}. Then the following statements hold:\\

\noindent 1. The map $t \mapsto N^*(t)$ is decreasing and continuous  on $(0,\widehat{\tau})$.\\
\noindent 2.  $\displaystyle \lim_{t \to 0+} N^*(t) = +\infty$ and $\displaystyle \lim_{t \to \widehat{\tau}} N^*(t) = 0$.\\
\noindent 3.  The functions $N^*(\cdot)$ (restricted to $(0,\widehat{\tau})$) and $\tau^*(\cdot)$  are inverse to each other, i.e.,
\begin{eqnarray}
N^*(\tau^*(M))=M \qquad (M>0)\;\; \mbox{\rm and}\;\;  \tau^*(N^*(\tau))=\tau \qquad(\tau\in (0,\widehat{\tau})).
\end{eqnarray}

\end{proposition}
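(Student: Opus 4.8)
The plan is to prove the three assertions of Proposition~\ref{NT} in order, using the auxiliary functional $J^\tau$ and Proposition~\ref{JT and NT} as the main tools, together with the observability inequality \eqref{OBSL1}.

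\textbf{Monotonicity and the limits at the endpoints.} First I would observe that, by definition of $\widehat\tau$, for every $\tau\in(0,\widehat\tau)$ we have $\|\mathbb{T}_\tau\psi\|>r$, so Lemma~\ref{JT} and Proposition~\ref{JT and NT} apply and give $N^*(\tau)=\int_0^\tau\|B^*\mathbb{T}^*_{\tau-t}\widehat\eta_\tau\|_U\,{\rm d}t$, where $\widehat\eta_\tau$ is the minimizer of $J^\tau$. For monotonicity, I would argue directly on the norm optimal control problem: if $0<\tau_1<\tau_2<\widehat\tau$ and $\widehat f_1$ is optimal for $(NP)^{\tau_1}$, then extending $\widehat f_1$ by zero on $(\tau_1,\tau_2)$ yields an admissible control for $(NP)^{\tau_2}$ with the same norm, so $N^*(\tau_2)\leqslant N^*(\tau_1)$. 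To get strict monotonicity one uses the bang-bang property from Corollary~\ref{bb_n_cor}: the extension by zero cannot itself be optimal for $(NP)^{\tau_2}$ since it is not bang-bang on all of $(0,\tau_2)$, hence the inequality is strict. For the limit $\lim_{\tau\to0+}N^*(\tau)=+\infty$, I would use the observability inequality \eqref{OBSL1}: since $\|z(\tau;\psi,\widehat f)\|\leqslant r$ we have $\|\mathbb{T}_\tau\psi\|\leqslant r+\|\Phi_\tau\widehat f\|\leqslant r+\tau\,\|B\|\,N^*(\tau)$ (roughly), while $\|\mathbb{T}_\tau\psi\|\to\|\psi\|>r$; rearranging forces $N^*(\tau)$ to be bounded below by a quantity tending to $+\infty$ as $\tau\to0+$. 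For $\lim_{\tau\to\widehat\tau}N^*(\tau)=0$: if $\widehat\tau<\infty$ then $\|\mathbb{T}_{\widehat\tau}\psi\|\leqslant r$, so the zero control is almost admissible for $\tau$ close to $\widehat\tau$; more precisely, using that $u\equiv0$ gives $z(\tau;\psi,0)=\mathbb{T}_\tau\psi$ and continuity of $\tau\mapsto\|\mathbb{T}_\tau\psi\|$, a small-norm control suffices to push the state inside $B(0,r)$, and quantifying this via \eqref{OBSL1} gives $N^*(\tau)\to0$. If $\widehat\tau=+\infty$ one uses the uniform decay estimate to get $\|\mathbb{T}_\tau\psi\|\to0<r$ and concludes similarly.

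\textbf{Continuity.} A monotone function on an interval has only jump discontinuities, so it suffices to rule these out. I would do this by establishing that $N^*$ is both upper and lower semicontinuous. Upper semicontinuity (right-continuity is the delicate side for a decreasing function): given $\tau_n\downarrow\tau$, extend the optimal control for $(NP)^{\tau}$ by zero to get admissible controls for $(NP)^{\tau_n}$, yielding $\limsup_n N^*(\tau_n)\leqslant N^*(\tau)$; combined with monotonicity this gives right-continuity. Lower semicontinuity (left-continuity): take $\tau_n\uparrow\tau$, let $\widehat f_n$ be optimal for $(NP)^{\tau_n}$; these are bounded in $L^\infty$ by $N^*(\tau_n)\leqslant N^*(\tau_1)$, so a weak* limit $f$ exists; by (a version of) Corollary~\ref{clasica_1} or a direct computation the states $z(\tau_n;\psi,\widehat f_n)$ converge weakly to $z(\tau;\psi,f)$, which therefore lies in $B(0,r)$ (weakly closed), making $f$ admissible for $(NP)^\tau$, whence $N^*(\tau)\leqslant\|f\|_{L^\infty}\leqslant\liminf_n N^*(\tau_n)$. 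An alternative, possibly cleaner route is to invoke the continuity of $\tau\mapsto K(\tau)$ in \eqref{OBSL1} together with part~1 of Lemma~\ref{JT} and continuous dependence of the minimizer $\widehat\eta_\tau$ of $J^\tau$ on $\tau$, but the semicontinuity argument is more self-contained.

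\textbf{The inverse relation.} Once parts~1 and~2 are established, $N^*$ restricted to $(0,\widehat\tau)$ is a continuous strictly decreasing bijection onto $(0,\infty)$, so it has a continuous strictly decreasing inverse. It remains to identify this inverse with $\tau^*$. Given $M>0$, set $\tau=\tau^*(M)$; by definition of the time optimal problem there is $u\in\mathcal U_M$ with $z(\tau;\psi,u)\in B(0,r)$, so $N^*(\tau)\leqslant M$, and minimality of $\tau$ forces the state to be exactly on the boundary in a way that prevents $N^*(\tau)<M$: if $N^*(\tau)<M$ there would be a control of norm $<M$ steering $\psi$ into $B(0,r)$ at time $\tau$, and by a continuity/openness argument (using that $N^*$ is continuous and $N^*(\tau-\delta)\to N^*(\tau)<M$) one could steer into the ball at a time $\tau-\delta<\tau$ still with norm $\leqslant M$, contradicting optimality of $\tau^*(M)$. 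Hence $N^*(\tau^*(M))=M$, and applying the inverse gives $\tau^*(N^*(\tau))=\tau$ for $\tau\in(0,\widehat\tau)$.

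\textbf{Main obstacle.} I expect the main difficulty to be the left-continuity of $N^*$ (equivalently, ruling out an upward jump as $\tau$ increases to a limit), since it genuinely requires a compactness/weak-limit argument to pass an optimal control through the limit and to know the limiting state stays in the target; the observability inequality \eqref{OBSL1} with continuous $K$ is exactly what makes the quantitative estimates at the endpoints work and is presumably also what underlies the coercivity needed for the weak-limit argument to produce an admissible limit control.
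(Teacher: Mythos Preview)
Your two continuity arguments are pointed in the wrong directions, and as a result the genuinely hard step---left continuity---is not covered. For a strictly decreasing function, right continuity at $\tau$ amounts to $\liminf_{s\downarrow\tau}N^*(s)\geqslant N^*(\tau)$ (the reverse is monotonicity), while left continuity amounts to $\limsup_{s\uparrow\tau}N^*(s)\leqslant N^*(\tau)$. Your weak*-compactness argument (extract a weak* limit of the $\widehat f_n$ and use weak closedness of $B(0,r)$) yields exactly $N^*(\tau)\leqslant\liminf_n N^*(\tau_n)$, i.e.\ lower semicontinuity; applied to $\tau_n\downarrow\tau$ this is a correct proof of \emph{right} continuity, and is precisely what the paper does. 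Applied to $\tau_n\uparrow\tau$, as you do, it only reproves monotonicity. Likewise, your ``extend by zero'' step for $\tau_n\downarrow\tau$ only gives $N^*(\tau_n)\leqslant N^*(\tau)$, again monotonicity.

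Left continuity is therefore a genuine gap. Neither restricting the $(NP)^\tau$-optimal control to $(0,\tau_n)$ (not admissible in general) nor weak* compactness produces $\limsup_{\tau_n\uparrow\tau}N^*(\tau_n)\leqslant N^*(\tau)$. The paper's argument uses \eqref{OBSL1} in an essential way: assuming a jump $N^*(\tau_n)\geqslant N^*(\tau)+\delta$, one takes the optimal $f^*$ for $(NP)^\tau$---whose trajectory is close to $B(0,r)$ already at time $\tau_n$ by continuity of $t\mapsto z(t;\psi,f^*)$---and a null control $v_n$ on $(0,\tau_n)$ obtained by duality from \eqref{OBSL1}, with $\|v_n\|_{L^\infty}\leqslant C\|\psi\|$ uniformly in $n$ (here the continuity of $K(\cdot)$ enters). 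A convex combination $\lambda f^*+(1-\lambda)v_n$ with $\lambda$ close to $1$ is then admissible for $(NP)^{\tau_n}$ with norm strictly less than $N^*(\tau)+\delta$, contradicting the jump. This null-controllability-plus-mixing construction is the missing idea; your ``main obstacle'' paragraph correctly locates the difficulty but misidentifies the tool.

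On the positive side, two of your shortcuts are cleaner than the paper's route. Your strict monotonicity via the bang-bang property is slicker than the paper's convex-combination argument; just note that Corollary~\ref{bb_n_cor} assumes \eqref{uni-cont-3.5}, which is not a hypothesis here---you can repair this directly from \eqref{OBSL1} and Theorem~\ref{bb_n}, since if the zero-extended control were optimal for $(NP)^{\tau_2}$ the maximum principle would force $B^*\mathbb{T}_s^*\eta=0$ on $(0,\tau_2-\tau_1)$, whence $\mathbb{T}_s^*\eta=0$ for all small $s$ and $\eta=0$. And your direct estimate $N^*(\tau)\geqslant(\|\mathbb{T}_\tau\psi\|-r)/(\tau\,\|B\|)$ for the blow-up at $0$ is simpler than the paper's compactness argument.
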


\begin{proof}
We first show that the considered map is decreasing. Suppose, by contradiction, that there exist  $\tau_1, \tau_2\in (0, \widehat{\tau})$, with   $\tau_1<\tau_2$, such that
\begin{equation}\label{assume_con}
N^*(\tau_2) \geqslant N^*(\tau_1)>0.
\end{equation}
On one hand, let $f^*$ be the extension of the norm optimal control of $(NP)^{\tau_1}$ over $(0,\tau_2)$ with zero value over $(\tau_1, \tau_2)$. Then, it follows that
\begin{equation}\label{yuanyuan4.42}
\|f^*\|_{L^\infty((0, \tau_2); U)}=N^*(\tau_1)\;\;\mbox{\rm and}\;\; z(\tau_1;\psi,f^*) \in B(0,r).
\end{equation}
On the other hand, due to a classical duality argument (see, for instance, Micu et al \cite[Proposition 2.6]{MRT}), \eqref{OBSL1} implies that there is a control $v \in L^\infty((0,\tau_2),U)$, supported in the interval
$[\tau_1,\tau_2]$, such that
\begin{equation}\label{yuanyuan4.43}
 z(\tau_2;\psi,v) =0\;\;\mbox{\rm and}\;\;
\|v\|_{L^\infty((0, {\tau_2});U)}\leqslant C \|\psi\|,
\end{equation}
where $C>0$ depends on $r$, $\tau_1$, $A$ and  $(\tau_2-\tau_1)$.
Choose $\lambda_0 \in (0,1)$ such that
\begin{equation}\label{marius3.20}
 C \lambda_0 \|\psi\| < N^*(\tau_1).
 \end{equation}

We now define a new control $\widetilde f$ over $(0,\tau_2)$ by
\begin{eqnarray}\label{marius3.21}
	\widetilde f= \chi_{[0,\tau_1)} (1-\lambda_0) u_1^* + \chi_{[\tau_1,\tau_2]} \lambda_0 v.
\end{eqnarray}	
Since $\mathbb{T}_t$ is a contraction semigroup, it follows from (\ref{yuanyuan4.42}) and (\ref{yuanyuan4.43}) that
  $\widetilde f$ is an admissible control to problem $(NP)^{\tau_2}$, which implies that
$ \|\widetilde f\|_{L^\infty((0,\tau_2);U)}\geqslant N^*(\tau_2)$.
Meanwhile, it follows from (\ref{marius3.21}), (\ref{yuanyuan4.42})  and (\ref{marius3.20}) that
$ \|\widetilde f\|_{L^\infty((0,\tau_2);U)}< N^*(\tau_1)$.
The above two inequalities  contradict the assumption \eqref{assume_con}.
Consequently $\tau \mapsto N^*(\tau)$ is decreasing on $(0,\widehat{\tau})$.

We next show that $\tau \mapsto N^*(\tau)$ is right continuous. To this end,
let $(\tau_n)_{n>0} \subset (0,\widehat{\tau})$ be an arbitrary decreasing sequence with $\displaystyle \lim_{n\to \infty} \tau_n =\tau$ and let $f^*_n$ be the extension of the norm optimal control of $(NP)^{\tau_n}$ over $(0,\infty)$ with zero value over $(\tau_n, \infty)$ for each $n\in \mathbb{N}$.  Then it is clear that
$$
z(\tau_n;,\psi,f_n^*)\in B(0,r)\;\;\mbox{\rm and}\;\; \| f_n^*\|_{L^{\infty}((0,\infty);U)} = N^*(\tau_n) \leqslant N^*(\tau),
$$
Since $(f_n^*)_{n\in\mathbb{N}}$ is bounded in $L^\infty((0,\infty);U)$, there exists a subsequence, denoted in the same way, and a control $g\in L^{\infty}((0,\infty);U)$ such that
$f_n^* \to g$ weakly* in $L^{\infty}((0,\infty);U)$ and $\;z(\tau_n;\psi, f_n^*) \to z(\tau;\psi, g)$  weakly in $X$. Then, it follows that
$z(\tau,\psi,g) \in B(0,r),$
which implies that $g$, when restricted over $(0,\tau)$,  is an admissible control to $(NP)^{\tau}$.
Thus, we have
$$
N^*(\tau) \leqslant \| g\|_{L^{\infty}((0,\tau);U)} \leqslant \displaystyle \liminf_{n \to \infty} \|f_n^*\|_{L^{\infty}((0,\tau);U)} = \displaystyle \lim_{n\to \infty} N^*(\tau_n) \leqslant N^*(\tau).
$$
Hence,  $\tau \mapsto N^*(\tau)$ is right continuous.

We now prove that the map $\tau \mapsto N^*(\tau)$ is left continuous.
Assume, by contradiction, that there did exist a $\delta>0$ such that
\begin{eqnarray}
N^*(\tau_n) >N^*(\tau)+\delta,\quad\mbox{\rm when}\quad n\in \mathbb{N},\label{lc1}
\end{eqnarray}
for some $(\tau_n)_{n>0} \subset (0,\widehat{\tau})$ with $\displaystyle \lim_{n\to \infty} \tau_n =\tau$.
To obtain a contradiction, it suffices to show the existence of $n_0\in \mathbb{N}$ and of a control $f_{n_0}$ such that
\begin{equation}\label{marius23}
\|{f}_{n_0}\|_{L^\infty((0,\infty);U))} < N^*(\tau_{n_0})\;\;\mbox{and}\;\;z(\tau_{n_0}; \psi, {f}_{n_0})\in B(0,r).
\end{equation}

We now prove the existence of such $n_0$ and $f_{n_0}$. Let $f^*$ be the extension of the norm optimal control of $(NP)^{\tau}$ over $(0,\infty)$ with zero value over $(\tau,\infty)$. Then,
\begin{equation}\label{yuanyuan4.49}
z(\tau; \psi, f^*)\in B(0,r)\;\;\mbox{\rm and}\;\; \|f^*\|_{L^\infty((0,\infty); U)}=N^*(\tau).
\end{equation}
By the continuity of $\tau \mapsto z(\tau;\psi, f^*)$, we have that given $\rho>0$, there exists a
natural number $n_1(\rho)$ such that for every $n > n_1(\rho)$, we have $ \| z(\tau_n;\psi, f^*)-z(\tau;\psi,f^*)\| \leqslant\rho$. This, along with (\ref{yuanyuan4.49}), yields that  for each $\lambda \in (0,1)$,
\begin{eqnarray}\label{lc2}
\| z(\tau_n;\lambda \psi, \lambda f^*)\| \leqslant\lambda (\rho+r) \quad \mbox{\rm when}\quad n>n_1(\rho).
\end{eqnarray}
Meanwhile, by the assumption \eqref{OBSL1}, we know that for each $n\in \mathbb{N}$, there exists  a control $v_{n}$, supported over
$(0,\tau_n)$, such that
\begin{eqnarray}\label{lc3}
\|v_n\|_{L^\infty((0,\infty);U))} \leqslant C \|\psi\|\;\;\mbox{\rm and}\;\; z(\tau_n;\psi,v_n)=0,
\end{eqnarray}
where $C$ is independent of $n$.
Define, for each $\lambda\in(0,1)$ and $n\in \mathbb{N}$,
\begin{eqnarray}\label{1-lc4}
f_{\lambda,n}:=\lambda f^* +(1-\lambda)v_{n}\;\;\mbox{\rm over}\;\;(0,\infty).
\end{eqnarray}
Then, by combining \eqref{lc2} and \eqref{lc3}, we have that for every $\lambda \in (0,1), \;\rho >0$,
\begin{eqnarray}
\|z(\tau_n;\psi,f_{\lambda,n})\|\leqslant \lambda(\rho +r)\quad \mbox{\rm when }\quad  n> n_1(\rho). \label{lc5-1}
\end{eqnarray}
Moreover, by \eqref{1-lc4} and \eqref{lc3}, for every $\lambda \in (0,1)$ we have
\begin{equation*}
\| f_{\lambda,n}\|_{L^\infty((0,\infty);U)}  \leqslant \lambda N^*(\tau)+ (1-\lambda) C \|\psi\|\quad \mbox{\rm for all }\quad n\in \mathbb{N}.
\end{equation*}
This, along with  \eqref{lc1}, indicates that  there exists $\lambda_0 \in (0,1)$ such that
\begin{eqnarray}
\|f_{\lambda_0,n}\|_{L^\infty((0,\infty); U)}< N^*(\tau_n)\quad \mbox{\rm for all }\quad n\in \mathbb{N}. \label{lc6}
\end{eqnarray}
We now fixe $\rho=\rho_0 >0$ such that
$\lambda_0 (r+ \rho_0)\leqslant r$.
Then, by  \eqref{lc5-1}, we have  that
\begin{equation} \label{lc12}
\|z(\tau_n; \psi, f_{\lambda_0,n}) \| \leqslant r, \quad\mbox{\rm when}\quad n> n_1(\rho).
\end{equation}
By taking $n=n_0 >n_1(\rho)$ in \eqref{lc12} and \eqref{lc6}, we are led to  \eqref{marius23},
 where $f_{n_0}=f_{\lambda_0,n_0}$.
Hence,  the map $\tau \mapsto N^*(\tau)$ is left continuous.

To prove the first equality in the second assertion of the proposition, let $(\tau_n)_{n>0}$ be a sequence of positive numbers such that $\displaystyle \lim_{n \to \infty} \tau_n =0$. Suppose by contradiction that there exists a subsequence of $(\tau_n)_{n>0}$, denoted in the same way,  such that
$\displaystyle \lim_{n \to \infty} N^*(\tau_n) \leqslant \rho$ for some $\rho >0$.
For each $n\in \mathbb{N}$, let $f^*_n$ be the extension of the norm optimal control of $(NP)^{\tau_n}$ over $(0,\infty)$ with zero value over $(\tau_n,\infty)$. Then, there exists $\widetilde f\in L^\infty((0,\infty);U)$
and a subsequence of $(f^*_n)_{n\in\mathbb{N}}$, denoted in the same manner, such that
$$
f_n^* \to \widetilde f \quad\mbox{\rm weakly* in }\ \ L^\infty((0,\infty);U)\;\;\mbox{\rm and}\;\;z(\tau_n;\psi,f^*_n) \to z(0;\psi,\widetilde f) \quad\mbox{ \rm weakly in }\ \ X.
$$
Since $z(\tau_n;\psi,f^*_n)\in B(0,r)$ for all $n$, the above two convergence properties imply that $\psi=z(0;\psi,\widetilde f) \in B(0,r)$, which  contradicts the assumption that $\psi \notin B(0,r)$.
 Hence, $\displaystyle \lim_{t \to 0+} N^*(t) = +\infty$.

The second equality in the second assertion of the proposition follows immediately from the definition \eqref{TAU0} of $\widehat{\tau}$ and the continuity of the map $\tau \mapsto N^*(\tau)$.

To prove the third assertion of the proposition,  we first check that $N^*(\tau^*(M))=M$.  If it did not hold, then
we would have either $N^*(\tau^*(M))>M$ for some $M>0$ or $N^*(\tau^*(M))<M$ for some $M>0$. In the case when $N^*(\tau^*(M))>M$ for some $M>0$, any control driving the solution of the system to $B(0,r)$ at time $\tau^*(M)$ would have  its $L^\infty((0,\infty);U)$-norm strictly greater than $M$.
Hence, the problem $(TP)^M$ has no optimal control. However, since $-A:{\cal D}(A)\to X$ generates a contraction semigroup and (\ref{OBSL1}) holds,
we can apply Theorem 3.1 in \cite{PhWZh} to get that  the problem $(TP)^M$ has optimal controls. This leads to a contradiction.
In the second case where $N^*(\tau^*(M))<M$ for some $M>0$, it follows from  the continuity and monotonicity of the map $t \mapsto N^*(t)$ that there exists $\tau_0\in (0, \tau^*(M))$ such that $N^*(\tau_0)=M$. Thus, there is a control $v$ such that
\begin{equation}\label{contradiction2}
\|v\|_{L^\infty((0,\tau_0); L^2(\Omega))}=M\quad \mbox{\rm and}\quad z(\tau_0; \psi,v)\in B(0,r).
\end{equation}
Since $\tau_0<\tau^*(M)$, we are led, from (\ref{contradiction2}), a contradiction to the optimality of $\tau^*(M)$ to the problem $(TP)^M$.
Thus, we have shown that  $N^*(\tau^*(M))=M$ for each $M>0$.
This, in particular, gives  that $N^*(\tau^*(N^*(\tau)))=N^*(\tau)$ for each $\tau \in(0,\widehat{\tau})$. The later, along with  the continuity and monotonicity of $N(\cdot)$,  leads to $\tau^*(N^*(\tau))=\tau$ for all $\tau \in (0,\widehat{\tau})$. Hence, the third assertion of the proposition has been proved. This ends the proof. \qquad \end{proof}

\begin{remark}
We mention that the properties of the minimal time and minimal norm functions derived in the above proposition have been investigated, in the case in which the target is the origin, in Wang and Zuazua \cite{WangZuazua}. Our method differs from the one in \cite{WangZuazua} in the sense that we first study the minimal norm function and then use its properties to obtain the properties of the minimal time function.
\end{remark}

The result below shows that the solution of the  time optimal control to the problem $(TP)^M$ has a simple expression in terms of the minimizer of $J^{\tau^*(M)}$, where $\tau^*(M)$ is the optimal time in $(TP)^M$ and $J^\tau$ has been defined in \eqref{minimization func}.

\

\begin{proposition}\label{equivalence J-T}
With the notation and assumptions of Proposition \ref{NT},
suppose that \eqref{uni-cont-3.5} holds. Let $M>0$ and
let $\widehat \eta$ be the minimizer of $J^{\tau^*(M)}$.
Then the function  $u^*$ defined by
\begin{eqnarray}
u^*(t)= M \frac{B^* \mathbb{T}^*_{\tau^*(M)-t}\widehat \eta}{\|B^* \mathbb{T}^*_{\tau^*(M)-t}\widehat \eta\|_U} \quad \mbox{\rm for a.e. }\quad  t\in(0,\tau^*(M)), \label{bb-op}
\end{eqnarray}
is the time optimal control to the problem  $(TP)^M$.
\end{proposition}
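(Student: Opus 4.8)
The plan is to derive the statement by combining the reciprocity between the minimal time and minimal norm functions established in Proposition~\ref{NT} with the explicit description of norm optimal controls obtained in Proposition~\ref{JT and NT}. Write $\tau^*:=\tau^*(M)$. First I would record two elementary facts: since $\psi\notin B(0,r)$ we have $z(0;\psi,u)=\psi\notin B(0,r)$ for every admissible $u$, so $\tau^*>0$; and, by the third assertion of Proposition~\ref{NT}, $N^*(\tau^*)=M$. Next I would check that $\tau^*<\widehat\tau$, where $\widehat\tau$ is defined in \eqref{TAU0}. Indeed $\tau^*\leqslant\widehat\tau$ because the zero control belongs to $\mathcal U_M$; and if one had $\tau^*=\widehat\tau<\infty$, then by continuity of $t\mapsto\|\mathbb{T}_t\psi\|$ and the minimality in \eqref{TAU0} we would get $\|\mathbb{T}_{\tau^*}\psi\|=r$, hence $N^*(\tau^*)=0$ (the zero control is already admissible for $(NP)^{\tau^*}$), contradicting $N^*(\tau^*)=M>0$; the case $\widehat\tau=+\infty$ is immediate. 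Consequently $\|\mathbb{T}_{\tau^*}\psi\|>r$, i.e. $\mathbb{T}_{\tau^*}\psi\notin B(0,r)$.

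Having this, I would apply Proposition~\ref{JT and NT} with $\tau=\tau^*$, which is legitimate since $\mathbb{T}_{\tau^*}\psi\notin B(0,r)$ and \eqref{uni-cont-3.5} holds; note that by Lemma~\ref{JT}(3) the minimizer $\widehat\eta$ of $J^{\tau^*}$ is nonzero, so that \eqref{uni-cont-3.5} forces $B^*\mathbb{T}^*_{\tau^*-t}\widehat\eta\neq 0$ for every $t\in[0,\tau^*)$ and the relevant formulas are well defined. Proposition~\ref{JT and NT} then gives that the norm optimal control $\widehat f$ of $(NP)^{\tau^*}$ is the function \eqref{bb control} and that $N^*(\tau^*)=\int_0^{\tau^*}\|B^*\mathbb{T}^*_{\tau^*-s}\widehat\eta\|_U\,{\rm d}s$. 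Substituting this value of the integral into \eqref{bb control} and using $N^*(\tau^*)=M$ yields, for a.e. $t\in(0,\tau^*)$,
\begin{equation*}
\widehat f(t)=\left(\int_0^{\tau^*}\|B^*\mathbb{T}^*_{\tau^*-s}\widehat\eta\|_U\,{\rm d}s\right)\frac{B^*\mathbb{T}^*_{\tau^*-t}\widehat\eta}{\|B^*\mathbb{T}^*_{\tau^*-t}\widehat\eta\|_U}=M\,\frac{B^*\mathbb{T}^*_{\tau^*-t}\widehat\eta}{\|B^*\mathbb{T}^*_{\tau^*-t}\widehat\eta\|_U}=u^*(t),
\end{equation*}
so that $u^*=\widehat f$ on $(0,\tau^*)$.

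It then remains to see that $u^*$, extended by zero to $(0,\infty)$, is a time optimal control for $(TP)^M$. On the one hand $\widehat f$ is admissible for $(NP)^{\tau^*}$, hence $z(\tau^*;\psi,\widehat f)\in B(0,r)$; on the other hand, directly from \eqref{bb control} (or from Corollary~\ref{bb_n_cor}), $\|\widehat f(t)\|_U=N^*(\tau^*)=M$ for a.e. $t\in(0,\tau^*)$, so $\|u^*\|_{L^\infty((0,\infty);U)}=M$ and $u^*\in\mathcal U_M$. Thus $u^*$ steers $\psi$ into $B(0,r)$ at the time $\tau^*=\tau^*(M)$, which by definition is the minimal such time; therefore $u^*$ is a time optimal control for $(TP)^M$, as claimed. (Alternatively one could verify that $u^*$ satisfies the maximum principle of Theorem~\ref{bb} with a suitably rescaled multiplier, but the argument above is shorter.) The only delicate point in the whole proof is the verification that $\tau^*(M)<\widehat\tau$, which is exactly what guarantees $\mathbb{T}_{\tau^*(M)}\psi\notin B(0,r)$ and hence the non-degeneracy $\widehat\eta\neq 0$ needed for formula \eqref{bb-op} to be meaningful; everything else is a direct combination of the already established Propositions~\ref{NT} and \ref{JT and NT}.
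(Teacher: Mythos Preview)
Your proof is correct and follows essentially the same approach as the paper: use Proposition~\ref{NT} to obtain $N^*(\tau^*(M))=M$, invoke Proposition~\ref{JT and NT} to identify $u^*$ with the norm optimal control of $(NP)^{\tau^*(M)}$, and conclude that $u^*$ is time optimal. Your verification that $\tau^*(M)<\widehat\tau$ (hence $\mathbb{T}_{\tau^*(M)}\psi\notin B(0,r)$ and $\widehat\eta\neq 0$) is in fact more carefully argued than the paper's corresponding step.
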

\begin{proof}
Since the uniqueness of the optimal control to the problem $(TP)^M$ has been proved in  Corollary \ref{bb_cor}, we only need to show that
the function $u^*$ defined by (\ref{bb-op}) is an optimal control to the problem $(TP)^M$. To this end, we first claim
\begin{eqnarray}
\widehat \eta \not = 0.\label{marius-1}
\end{eqnarray}
Indeed, if $\widehat \eta=0$, then by the third assertion of Lemma \ref{JT}, we would have that  $\mathbb{T}_{\tau^*(M)}\widehat \psi \in B(0,r)$.
Hence, the null control is the time optimal control to $(TP)^M$. This contradicts to
 the assumption $\psi \not \in B(0,r)$. Consequently, \eqref{marius-1} holds. Hence, $u^*$ is  well defined.

By Proposition \ref{NT}, we have that $N^*(\tau^*(M))=M$. Then by Proposition \ref{JT and NT}, $u^*$ is the norm optimal control of $(NP)^{\tau^*(M)}$, i.e., $z(\tau^*(M);\psi,u^*) \in B(0,r)$ and $ \|u^*\|_{L^\infty((0,\tau^*(M));L^2(\Omega))}=N^*(\tau^*(M))=M$. Consequently, $u^*$ is the time optimal control to the problem $(TP)^M$. This completes the proof. \qquad \end{proof}

\

\begin{remark}\label{time-norm-control} Note that the approximate observability in any time
implies that the problem $(NP)^\tau$ has solutions.
This, as well as Corollary \ref{bb_n_cor}, shows that for each $\tau>0$, the problem $(NP)^\tau$ has a unique optimal control.

To ensure the existence of time optimal controls to the problem $(TP)^M$, we need more assumptions due to the control constraint. More precisely, we have
$$
(\ref{OBSL1})\; \mbox{\rm together with the contractivity of}\; \{\mathbb{T}_t\}_{t\geqslant 0}\;\; \Longrightarrow \;\; (TP)^M\;\mbox{\rm has solutions}.
$$
The later, together with Corollary \ref{bb_cor}, implies that for each $M>0$, $(TP)^M$ has a unique optimal control.

\end{remark}


\section{Proof of Theorem~\ref{th_main}}
\setcounter{equation}{0}
\label{sec_main_proof}

\

In this section we come back to the assumptions and notation in Section \ref{sec_intro}.
In particular, for every $\varepsilon\geqslant 0$ we have that $-A_\varepsilon$ generates a contraction semigroup on $X$, denoted by $\mathbb{T}^\varepsilon$, which is analytic
and exponentially stable (uniformly with respect to $\varepsilon$, see the first estimate in \eqref{unif_decay}).
We recall from Section \ref{sec_intro} that the main object is to study the asymptotic behavior when $\varepsilon\to 0$ of the family of time optimal control problems $\left((TP)^M_\varepsilon\right)_{\varepsilon\geqslant 0}$ defined by (\ref{time-optimal-control1.9}), where $M>0$, $r>0$ and $\psi\not \in B(0,r)$ are given. Moreover, for every $\tau>0$ and $\varepsilon\geqslant 0$ we define the norm optimal control and minimization problems respectively:
\begin{equation}\label{huang5.1}
(NP)^\tau_\varepsilon \quad N_\varepsilon^*(\tau) := \displaystyle\min_{f\in L^{\infty}((0,\tau);U)} \left\lbrace \|f\|_{L^{\infty}((0,\tau);U)}\ |\ \mathbb{T}_t^\varepsilon \psi+\Phi_t^\varepsilon f\in B(0,r)\right\rbrace;
\end{equation}
\vspace{-0.5cm}
\begin{equation}\label{huang5.2}
(JP)^\tau_\varepsilon\qquad\qquad
V^*_{\varepsilon}(T) := \displaystyle\min_{\eta\in X} \left\lbrace J^\tau_\varepsilon(\eta)\right\rbrace,
\end{equation}
\vspace{-0.5cm}
$$
\noindent \mbox{where} \;J^\tau_\varepsilon(\eta) = \frac{1}{2} \left(\int_0^\tau \|B^*\varphi_\varepsilon(t;\eta,\tau) \|_{U}\, {\rm d}t \right)^2 + \left\langle \psi,\varphi_\varepsilon(0;\eta,\tau)\right\rangle+r \| \eta\| \qquad(\eta\in X),
$$
with the notation $ \varphi_\varepsilon(t;\psi,\tau) :=\mathbb{T}_{\tau-t}^{\varepsilon} \psi\ \ (t\in [0, \tau])$.

\begin{remark}\label{added_rem}
It should be pointed out that when assumptions (C1)-(C4) hold, each pair $(A_\varepsilon, B)$, with $\varepsilon\geqslant 0$, satisfies the conditions \eqref{uni-cont-3.5} and \eqref{OBSL1}. Indeed, \eqref{OBSL1} is exactly (C4), whereas \eqref{uni-cont-3.5} follows
from (C4) and the analyticity of the semigroup $\mathbb{T}^\varepsilon$. Therefore, all results obtained in the previous two sections  hold for each pair $(A_\varepsilon, B)$, with $\varepsilon\geqslant 0$.
\end{remark}

Our strategy to show the main results is to build up a strong  convergence of minimizers  of
$(JP)_\varepsilon^{\tau_\varepsilon}$ in $X$ and then to apply the relation of the minimal time and norm optimal
control problems established Section \ref{sec_relation}. More precisely,
an important ingredient of the proof of our main theorem states as follows.

\

\begin{proposition}\label{strong convergence minimizer}
With the above notation, suppose that (C1)-(C4) hold.
Let $(\tau_\varepsilon)_{\varepsilon\geqslant 0}$ be a family of positive numbers such that $\tau_\varepsilon \to \tau_0$, with $\tau_0>0$,
 and let $\widehat\eta_\varepsilon$ be the minimizer of
 $J_{\varepsilon}^{\tau_\varepsilon}$ for each $\varepsilon \geqslant 0$. Then we have $ \widehat \eta_\varepsilon \to \widehat\eta_0$  strongly in $X$.
\end{proposition}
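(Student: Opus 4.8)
The plan is to prove strong convergence of the minimizers $\widehat\eta_\varepsilon$ by combining three ingredients: (i) a uniform coercivity estimate for the functionals $J_\varepsilon^{\tau_\varepsilon}$ that forces $(\widehat\eta_\varepsilon)$ to be bounded in $X$; (ii) a $\Gamma$-convergence-type argument based on the semigroup approximation results of Section~\ref{sec_approx} to identify the weak limit of $(\widehat\eta_\varepsilon)$ as $\widehat\eta_0$; and (iii) an upgrade from weak to strong convergence using convexity together with convergence of the energies $J_\varepsilon^{\tau_\varepsilon}(\widehat\eta_\varepsilon)\to J_0^{\tau_0}(\widehat\eta_0)$.

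First I would establish boundedness. Since $\mathbb{T}^\varepsilon$ is exponentially stable uniformly in $\varepsilon$ by the first estimate in \eqref{unif_decay}, and (C4) provides the observability inequality \eqref{OBSL1} with a constant $K(\varepsilon,\tau)$ that is continuous in $\tau$, I expect one can produce a constant that is uniform over $\varepsilon$ small and $\tau$ in a neighborhood of $\tau_0$; this uses (C2)--(C3) to compare the $A_\varepsilon$ with $A_0$. (Indeed the text announces in the proof of Lemma~\ref{JT} that ``a stronger version of this coercive property will be proved in Proposition~\ref{coercivity} below,'' which I would invoke.) Combined with the boundedness of $\psi$ and the term $r\|\eta\|$, this gives $J_\varepsilon^{\tau_\varepsilon}(\eta)\geqslant c\|\eta\|^2 - C\|\eta\|$ with $c,C$ independent of $\varepsilon$, hence $\|\widehat\eta_\varepsilon\|\leqslant R$ for all small $\varepsilon$. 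So, along any subsequence, $\widehat\eta_\varepsilon\rightharpoonup\eta_*$ weakly in $X$ for some $\eta_*$.

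Next I would identify $\eta_* = \widehat\eta_0$. For the lower bound, I use that $\varphi_\varepsilon(\cdot;\widehat\eta_\varepsilon,\tau_\varepsilon) = \mathbb{T}^\varepsilon_{\tau_\varepsilon - \cdot}\widehat\eta_\varepsilon$ converges, by Proposition~\ref{cons_abs_1}(1) and (3) applied to the adjoint semigroups (which coincide with $\mathbb{T}^\varepsilon$ since $A_\varepsilon$ is self-adjoint), weakly$^*$ in $L^\infty$ to $\mathbb{T}^0_{\tau_0-\cdot}\eta_*$, so that $\int_0^{\tau_\varepsilon}\|B^*\varphi_\varepsilon\|_U\,dt$ is lower semicontinuous along the sequence, while the linear term $\langle\psi,\varphi_\varepsilon(0;\widehat\eta_\varepsilon,\tau_\varepsilon)\rangle = \langle\psi,\mathbb{T}^\varepsilon_{\tau_\varepsilon}\widehat\eta_\varepsilon\rangle$ converges (by Proposition~\ref{cons_abs_1}(3), this is even strong convergence of $\mathbb{T}^\varepsilon_{\tau_\varepsilon}\widehat\eta_\varepsilon$), and $\|\eta_*\|\leqslant\liminf\|\widehat\eta_\varepsilon\|$. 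Hence $J_0^{\tau_0}(\eta_*)\leqslant\liminf_\varepsilon J_\varepsilon^{\tau_\varepsilon}(\widehat\eta_\varepsilon)$. For the matching upper bound (the recovery sequence), I test each $J_\varepsilon^{\tau_\varepsilon}$ at the fixed vector $\widehat\eta_0$: by Proposition~\ref{cons_abs_1}(2) and a dominated-convergence argument, $\int_0^{\tau_\varepsilon}\|B^*\mathbb{T}^\varepsilon_{\tau_\varepsilon-t}\widehat\eta_0\|_U\,dt\to\int_0^{\tau_0}\|B^*\mathbb{T}^0_{\tau_0-t}\widehat\eta_0\|_U\,dt$ and $\langle\psi,\mathbb{T}^\varepsilon_{\tau_\varepsilon}\widehat\eta_0\rangle\to\langle\psi,\mathbb{T}^0_{\tau_0}\widehat\eta_0\rangle$, so $J_\varepsilon^{\tau_\varepsilon}(\widehat\eta_0)\to J_0^{\tau_0}(\widehat\eta_0)$. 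Since $\widehat\eta_\varepsilon$ is a minimizer, $J_\varepsilon^{\tau_\varepsilon}(\widehat\eta_\varepsilon)\leqslant J_\varepsilon^{\tau_\varepsilon}(\widehat\eta_0)$, and passing to the limit gives $J_0^{\tau_0}(\eta_*)\leqslant J_0^{\tau_0}(\widehat\eta_0)$; by uniqueness of the minimizer of $J_0^{\tau_0}$ (Lemma~\ref{JT}(2)), $\eta_*=\widehat\eta_0$. Because the limit is independent of the subsequence, the whole family converges weakly.

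Finally, to promote weak to strong convergence, I note that the previous chain of inequalities is in fact a chain of equalities, so $J_\varepsilon^{\tau_\varepsilon}(\widehat\eta_\varepsilon)\to J_0^{\tau_0}(\widehat\eta_0)$. Since the nonnegative quadratic observation term and the linear term converge as computed above (the linear term uses strong convergence $\mathbb{T}^\varepsilon_{\tau_\varepsilon}\widehat\eta_\varepsilon\to\mathbb{T}^0_{\tau_0}\widehat\eta_0$ from Proposition~\ref{cons_abs_1}(3)), subtracting forces $r\|\widehat\eta_\varepsilon\|\to r\|\widehat\eta_0\|$, i.e.\ $\|\widehat\eta_\varepsilon\|\to\|\widehat\eta_0\|$; combined with weak convergence in the Hilbert space $X$, this yields $\widehat\eta_\varepsilon\to\widehat\eta_0$ strongly. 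Alternatively one can use the uniform strong convexity (the quadratic lower bound with $\varepsilon$-independent modulus) directly to estimate $\|\widehat\eta_\varepsilon-\widehat\eta_0\|^2$ in terms of $J_\varepsilon^{\tau_\varepsilon}(\widehat\eta_\varepsilon) - J_\varepsilon^{\tau_\varepsilon}(\widehat\eta_0)\to 0$. The main obstacle I anticipate is the first step: obtaining coercivity of $J_\varepsilon^{\tau_\varepsilon}$ with a modulus and a $\tau$-range that are genuinely uniform in $\varepsilon$ near $\tau_0$, which requires controlling the observability constant $K(\varepsilon,\tau)$ uniformly; this is precisely where assumptions (C2)--(C4) and the uniform analyticity/decay in \eqref{unif_decay} must be used carefully, and it is deferred in the text to Proposition~\ref{coercivity}.
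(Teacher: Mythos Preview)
Your overall architecture---uniform coercivity to get boundedness, a $\Gamma$-convergence argument (liminf inequality plus recovery at the fixed test vector $\widehat\eta_0$) to identify the weak limit, and then norm convergence to upgrade weak to strong---is exactly the paper's strategy, and steps (ii) and (iii) match the paper's proof essentially line by line.

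The one genuine discrepancy is in step (i). You claim a uniform quadratic lower bound $J_\varepsilon^{\tau_\varepsilon}(\eta)\geqslant c\|\eta\|^2-C\|\eta\|$, obtained by making the observability constant $K(\varepsilon,\tau)$ uniform in $\varepsilon$. Neither of these is available: assumption (C4) gives no uniformity of $K$ in $\varepsilon$, and even for a single $\varepsilon$ one does \emph{not} have $\int_0^\tau\|B^*\mathbb{T}^\varepsilon_s\eta\|_U\,ds\geqslant c\|\eta\|$ (this would be exact observability of a parabolic system with bounded $B$, which fails in general). The paper's Lemma~\ref{coercivity} proves only the \emph{linear} coercivity $\liminf_{\varepsilon\to0,\ \|\eta\|\to\infty} J_\varepsilon^{\tau_\varepsilon}(\eta)/\|\eta\|\geqslant r$, and does so by a soft compactness/contradiction argument: normalize $\gamma_j=\eta_j/\|\eta_j\|$, extract a weak limit, and use (C4) together with analyticity only for the \emph{limiting} operator $A_0$ to force that limit to be zero, whence the linear term vanishes by Proposition~\ref{cons_abs_1}(3). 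No uniform quantitative observability is needed. Since you ultimately defer this step to the paper's lemma, your sketch survives; but your stated mechanism for it is wrong, and consequently the ``alternative'' route you mention at the end---estimating $\|\widehat\eta_\varepsilon-\widehat\eta_0\|^2$ via uniform strong convexity---is not available here. The norm-convergence argument you give first (subtracting the converging quadratic and linear pieces from $J_\varepsilon^{\tau_\varepsilon}(\widehat\eta_\varepsilon)\to J_0^{\tau_0}(\widehat\eta_0)$ to force $\|\widehat\eta_\varepsilon\|\to\|\widehat\eta_0\|$) is the correct one and is precisely what the paper does.
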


\

To prove the above proposition we use  an essential way in the following lemma, which has many common points with a result in \cite[Ch.4.2]{Zuazua}.

\

\begin{lemma}\label{coercivity}
With the same notation and assumptions as in Proposition \ref{strong convergence minimizer}, we have:\\
\noindent 1. $\displaystyle\liminf_{\varepsilon \to 0+, \|\eta\|\to \infty} \ J_\varepsilon^{\tau_\varepsilon}({\eta}) / \|\eta\| \geqslant r$.

\

\noindent 2. The family $(\widehat \eta_\varepsilon)_{\varepsilon>0}$  is bounded in $X$.
\end{lemma}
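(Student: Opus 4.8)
The plan is to establish assertion~1 (the asymptotic coercivity) and then read off assertion~2 from it. For assertion~1 I would argue by contradiction: suppose $\liminf_{\varepsilon\to 0+,\,\|\eta\|\to\infty} J_\varepsilon^{\tau_\varepsilon}(\eta)/\|\eta\| < r$, so there are $\ell<r$ and a sequence $(\varepsilon_k,\eta_k)$ with $\varepsilon_k\to 0+$, $\|\eta_k\|\to\infty$ and $J_{\varepsilon_k}^{\tau_{\varepsilon_k}}(\eta_k)/\|\eta_k\|\to\ell$. Set $\widetilde\eta_k:=\eta_k/\|\eta_k\|$; since $\|\widetilde\eta_k\|=1$, after passing to a subsequence we have $\widetilde\eta_k\rightharpoonup\widetilde\eta$ weakly in $X$ by weak compactness of the unit ball. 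Since $\mathbb{T}^{\varepsilon}$ is self-adjoint, $B^*\varphi_{\varepsilon}(t;\eta,\tau)=B^*\mathbb{T}^{\varepsilon}_{\tau-t}\eta$, so after the change of variable $s=\tau-t$ and writing $c_k:=\int_0^{\tau_{\varepsilon_k}}\|B^*\mathbb{T}^{\varepsilon_k}_s\widetilde\eta_k\|_U\,{\rm d}s$ one gets
\[
\frac{J_{\varepsilon_k}^{\tau_{\varepsilon_k}}(\eta_k)}{\|\eta_k\|}=\frac{\|\eta_k\|}{2}\,c_k^2+\big\langle\psi,\mathbb{T}^{\varepsilon_k}_{\tau_{\varepsilon_k}}\widetilde\eta_k\big\rangle+r .
\]

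Next I would pass to the limit in the two $\varepsilon$-dependent quantities. By Proposition~\ref{cons_abs_1}(3), applied with $\psi_\varepsilon=\widetilde\eta_k\rightharpoonup\widetilde\eta$ and times $\tau_{\varepsilon_k}\to\tau_0$, we get $\mathbb{T}^{\varepsilon_k}_{\tau_{\varepsilon_k}}\widetilde\eta_k\to\mathbb{T}^0_{\tau_0}\widetilde\eta$ strongly in $X$, hence $\langle\psi,\mathbb{T}^{\varepsilon_k}_{\tau_{\varepsilon_k}}\widetilde\eta_k\rangle\to\langle\psi,\mathbb{T}^0_{\tau_0}\widetilde\eta\rangle$. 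For $c_k$ I would apply the same proposition with the time parameter frozen at each fixed $s>0$, which gives $\|B^*\mathbb{T}^{\varepsilon_k}_s\widetilde\eta_k\|_U\to\|B^*\mathbb{T}^0_s\widetilde\eta\|_U$ pointwise a.e.\ in $s$; combining this with the pointwise convergence $\chi_{(0,\tau_{\varepsilon_k})}\to\chi_{(0,\tau_0)}$ a.e.\ (valid since $\tau_{\varepsilon_k}\to\tau_0$), with the uniform bound $\|B^*\mathbb{T}^{\varepsilon_k}_s\widetilde\eta_k\|_U\le\|B^*\|_{\mathcal{L}(X,U)}\,{\rm e}^{-\alpha_0 s}$ from \eqref{unif_decay} (which is in $L^1(0,\infty)$), and with dominated convergence for the integrands extended by zero to $(0,\infty)$, one obtains $c_k\to c_0:=\int_0^{\tau_0}\|B^*\mathbb{T}^0_s\widetilde\eta\|_U\,{\rm d}s$.

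Now comes the dichotomy. If $c_0>0$, then $c_k\ge c_0/2$ for $k$ large, so $\tfrac{\|\eta_k\|}{2}c_k^2\to+\infty$, while the remaining terms stay bounded (by \eqref{unif_decay} one has $|\langle\psi,\mathbb{T}^{\varepsilon_k}_{\tau_{\varepsilon_k}}\widetilde\eta_k\rangle|\le\|\psi\|$); this forces $\ell=+\infty$, contradicting $\ell<r$. If $c_0=0$, then $B^*\mathbb{T}^0_s\widetilde\eta=0$ for a.e.\ $s\in(0,\tau_0)$, i.e.\ on a set of positive measure; since $(A_0,B)$ satisfies \eqref{uni-cont-3.5} (Remark~\ref{added_rem}, via (C4) and the analyticity of $\mathbb{T}^0$), this gives $\widetilde\eta=0$, so $\langle\psi,\mathbb{T}^{\varepsilon_k}_{\tau_{\varepsilon_k}}\widetilde\eta_k\rangle\to 0$; since $\tfrac{\|\eta_k\|}{2}c_k^2\ge 0$, we conclude $\ell=\lim_k J_{\varepsilon_k}^{\tau_{\varepsilon_k}}(\eta_k)/\|\eta_k\|\ge r$, again contradicting $\ell<r$. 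This proves assertion~1. Assertion~2 then follows immediately: the minimizer $\widehat\eta_\varepsilon$ (existing and unique by Lemma~\ref{JT} applied to $(A_\varepsilon,B)$, using Remark~\ref{added_rem}) satisfies $J_\varepsilon^{\tau_\varepsilon}(\widehat\eta_\varepsilon)\le J_\varepsilon^{\tau_\varepsilon}(0)=0$; were $(\widehat\eta_\varepsilon)_{\varepsilon>0}$ unbounded as $\varepsilon\to 0+$, there would be $\varepsilon_k\to 0+$ with $\|\widehat\eta_{\varepsilon_k}\|\to\infty$ and $J_{\varepsilon_k}^{\tau_{\varepsilon_k}}(\widehat\eta_{\varepsilon_k})/\|\widehat\eta_{\varepsilon_k}\|\le 0<r$, contradicting assertion~1.

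The step I expect to be the main obstacle is the passage to the limit in the observation integral $c_k$ along the \emph{varying} operators $A_{\varepsilon_k}$ and horizons $\tau_{\varepsilon_k}$, having at hand only weak convergence of the normalized vectors $\widetilde\eta_k$: this is precisely where the strong convergence provided by Proposition~\ref{cons_abs_1}(3) and the uniform exponential decay in \eqref{unif_decay} are indispensable, and where, in the degenerate case $c_0=0$, one must invoke the approximate observability of the limiting pair $(A_0,B)$ to force $\widetilde\eta=0$ and thereby kill the linear term.
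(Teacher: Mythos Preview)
Your proof is correct and follows essentially the same strategy as the paper's: normalize, extract a weakly convergent subsequence of the unit vectors, split into the two cases according to whether the limiting observation integral vanishes, and in the degenerate case invoke (C4) plus analyticity to force the weak limit to be zero; assertion~2 is then read off exactly as you do. The only technical difference is in how the observation integral is passed to the limit: the paper argues via uniform convergence of $t\mapsto\mathbb{T}^{\varepsilon_k}_{\tau_{\varepsilon_k}-t}\widetilde\eta_k$ on $[0,\tau_0-\delta]$, whereas you use Proposition~\ref{cons_abs_1}(3) pointwise in $s$ together with dominated convergence against the uniform exponential bound from \eqref{unif_decay}. Both routes are valid; yours is arguably a little more direct and avoids the (somewhat implicit) two-step use of Proposition~\ref{cons_abs_1} needed to upgrade weak convergence of data to uniform-in-time convergence away from the terminal time.
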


\

\begin{proof}
Let $(\varepsilon_j)_{j\in \mathbb{N}}$ be a sequence of numbers such that $\varepsilon_j \to 0$ and let  $(\eta_j)_{j\in \mathbb{N}} $ be a sequence in $X$  such that $\|\eta_j \| \to \infty$. For each $j\in \mathbb{N}$, we set $ I_j= {J_{\varepsilon_j}^{\tau_{\varepsilon_j}}(\eta_j)}/{\|\eta_j\|}$. It suffices to show that  $I_j \geqslant r$ for all $j\in \mathbb{N}$.

Indeed, for each $j\in \mathbb{N}$, we set $\gamma_j =\eta_j/\|\eta_j\|$. It is clear that
\begin{eqnarray}\label{ggg}
I_j= r +\left\langle \psi, \varphi_{\varepsilon_j}(0; \gamma_j,\tau_{\varepsilon_j}) \right\rangle+ \frac{\|\eta_j\|}{2} \left(\int_0^{\tau_{\varepsilon_j}} \|B^*\varphi_{\varepsilon_j}(t;\gamma_j,\tau_{\varepsilon_j})\|_U\, {\rm d}t \right)^2.
\end{eqnarray}
We first consider the case when $ \displaystyle \liminf_{j \to \infty} \left(\int_0^{\tau_{\varepsilon_j}}
\|B^*\varphi_{\varepsilon_j}(t; \gamma_j,\tau_{\varepsilon_j})\|_U\,{\rm d}t \right)^2 >0$.
Since, using the first estimate in \eqref{unif_decay}, we have, for each $j\in \mathbb{N}$, that $\| \varphi_{\varepsilon_j}(0; \gamma_j,\tau_{\varepsilon_j})\|  \leqslant 1$, it follows that  $\displaystyle\liminf_{j \to \infty} I_j =\infty$, which obviously implies $I_j \geqslant r$.

We next consider the case when $ \displaystyle \liminf_{j \to \infty} \left(\int_0^{\tau_{\varepsilon_j}} \|B^*\varphi_{\varepsilon_j}(t;\gamma_j,\tau_{\varepsilon_j})\|_U\, {\rm d}t \right)^2 =0$.
Suppose by contradiction that, up to the extraction of subsequences, we have that
\begin{eqnarray}\label{ggg2}
\int_0^{\tau_{\varepsilon_j}} \|B^*\varphi_{\varepsilon_j}(t; \gamma_j,\tau_{\varepsilon_j})\|_{U}\, {\rm d}t  \to 0, \quad\quad \displaystyle \liminf_{j \to \infty} I_j <r
\end{eqnarray}
and that $\gamma_j \to \widehat \gamma_0$ weakly in $X$, for some $\widehat\gamma_0 \in X$. The last convergence clearly implies, by using the second assertion in Proposition \ref{cons_abs_1}, that for any $\delta \in (0,\tau_0)$,
\begin{eqnarray}
\varphi_{\varepsilon_j}(\cdot;\gamma_j,\tau_{\varepsilon_j}) \to \varphi_0(\cdot;\widehat \gamma_0,\tau_{\varepsilon_j})\quad \mbox{\rm in }\quad C([0,\tau_0-\delta];X). \label{ggg5}
\end{eqnarray}
Combining \eqref{ggg5} and the first result in \eqref{ggg2}, we obtain
$$
\int_0^{\tau_0-\delta} \|B^* \varphi_0(t;\widehat \gamma_0,\tau_{\varepsilon_j})\|_{U}\, {\rm d}t \leqslant \displaystyle \lim_{j \to \infty} \int_0^{\tau_{\varepsilon_j}} \|B^*\varphi_{\varepsilon_j}(t;\gamma_j,\tau_{\varepsilon_j})\|_{U}\, {\rm d}t   =0.
$$
Using assumption (C4) and the analyticity of $\mathbb{T}^\varepsilon$, this implies that $\widehat \gamma_0=0$. Consequently, we have $\gamma_j \to 0$  weakly in $X$. This, together with  \eqref{ggg5}, indicates that
\begin{eqnarray}
 \varphi_{\varepsilon_j}(0;\gamma_j,\tau_{\varepsilon_j}) \to 0\quad \mbox{\rm weakly in }\quad X. \label{ggg7}
\end{eqnarray}
Finally, it follows by \eqref{ggg} and \eqref{ggg7} that $\displaystyle\liminf_{j \to \infty} I_j \geqslant r$.
This contradicts to the second result in \eqref{ggg2}. Hence $I_j \geqslant r$. This ends the proof of the first assertion.

To prove the second assertion of the lemma, we assume, by contradiction, that there exists a sequence $(\varepsilon_j)_{j\in \mathbb{N}}$ such that $\displaystyle \lim_{j \to \infty} \|\widehat \eta_{\varepsilon_j}\| =\infty$. Then, it is clear that from the first assertion of this lemma, we have
$$
0=\displaystyle \liminf_{j \to \infty} \frac{J_{\varepsilon_j}^{\tau_{\varepsilon_j}}(0)}{\| \widehat  \eta_{\varepsilon_j}\|}\geqslant \displaystyle \liminf_{j \to \infty} \frac{J_{\varepsilon_j}^{\tau_{\varepsilon_j}}(\widehat \eta_{\varepsilon_j}) }{\|\widehat \eta_{\varepsilon_j}\|} \geqslant r,
$$
which leads to a contradiction. Hence, $(\widehat \eta_\varepsilon)_{\varepsilon>0}$ is bounded. This ends the proof. \qquad \end{proof}

\

{\em Proof of Proposition \ref{strong convergence minimizer}.}
Let $(\varepsilon_j)$ be a sequence with $\varepsilon_j \to 0$. We will show  that
\begin{eqnarray}\label{SC1}
\widehat \eta_{\varepsilon_j} \to \widehat\eta_0\quad \mbox{\rm weakly in }\;\; X \qquad \mbox{and} \qquad \displaystyle \lim_{j \to \infty} \|\widehat \eta_{\varepsilon_j} \| = \| \widehat \eta_0\|.
\end{eqnarray}
Combining the above two assertions, the strong convergence stated in Proposition \ref{strong convergence minimizer} follows at once.

We first prove the first convergence in \eqref{SC1}.
Since, the sequence $(\widehat \eta_{\varepsilon_j})_{j\in \mathbb{N}} $ is bounded (Lemma \ref{coercivity}), it suffices to prove that any weakly convergent subsequence
$(\widehat \eta_{\varepsilon_{j_k}})_{k\in\mathbb{N}}$  of $(\widehat \eta_{\varepsilon_j})_{j\in \mathbb{N}} $ has the same weak limit $\widehat\eta_0$.
To prove this, assume that for some $\widetilde\eta\in X$,
\begin{eqnarray}
\displaystyle \lim_{k\to \infty} \widehat \eta_{\varepsilon_{j_k}} = \widetilde\eta\quad \mbox{\rm weakly in }\quad X. \label{SC2}
\end{eqnarray}
Since $J_0^\tau$ has a unique minimizer (see Lemma \ref{JT}), it suffices to show that
\begin{eqnarray}
J_0^{\tau_0}(\widetilde \eta)\leqslant J_0^{\tau_0}(\widehat \eta_0). \label{SC4}
\end{eqnarray}
In fact, by the second assertion of Proposition \ref{cons_abs_1} and \eqref{SC2}, it follows that
\begin{equation}\label{SC6}
\noindent \langle \psi, \varphi_0(0;\widetilde \eta,\tau_0)\rangle = \displaystyle \lim_{k\rightarrow\infty} \langle \psi,
\varphi_{\varepsilon_{j_k}}(0;\widehat \eta_{\varepsilon_{j_k}},\tau_{\varepsilon_{j_k}})\rangle \quad \mbox{and} \quad r\|\widetilde \eta\| \leqslant \displaystyle \liminf_{k \to \infty}   r\|\widehat \eta_{\varepsilon_{j_k}}\|.
\end{equation}
By the first estimate in \eqref{unif_decay}, for every $t\in[0,\tau_0]$, we have $\|\varphi_0(t;\widetilde \eta,\tau_0))\| \leqslant  \|\widetilde \eta \|$. Thus, given  $\sigma>0$, there exists $\delta_0=\delta_0(\sigma)>0$ such that when $\delta \leqslant \delta_0$, we have
$$
 \left(\int_0^{\tau_0} \|B^*\varphi_0(t;\widetilde \eta,\tau_0) \|_{U}\, {\rm d}t  \right)^2  \leqslant \left(\int_0^{{\tau_0}-\delta} \|B^* \varphi_0(t;\widetilde \eta,\tau_0) \|_{U}\, {\rm d}t  \right)^2 +\sigma.
$$
together with $\tau_{\varepsilon_{j_k}} > \tau_0- \delta$.
According to the second assertion of Proposition \ref{cons_abs_1}, we know that for every $\delta \in (0,{\tau_0})$,
$$
\varphi_{\varepsilon_{j_k}}(\cdot;\widehat \eta_{\varepsilon_{j_k}},\tau_{
\varepsilon_{j_k}}) \to \varphi_0(\cdot;\widetilde \eta,\tau_0)\quad \mbox{\rm in }\quad C([0,\tau_0-\delta];X).
$$
Hence, we have
$$
\left(\int_0^{\tau_0} \|B^*\varphi_0(t;\widetilde \eta,\tau_0) \|_{U}\, {\rm d}t  \right)^2  \leqslant\displaystyle \liminf_{k\to \infty} \left(\int_0^{\tau_{\varepsilon_{j_k}}} \|B^*\varphi_{\varepsilon_{j_k}}(t;\widehat \eta_{\varepsilon_{j_k}},\tau_{\varepsilon_{j_k}}) \|_{U}\, {\rm d}t  \right)^2 +\sigma.
$$
Since $\sigma$ can be chosen arbitrarily, it follows that
\begin{equation}
\left(\int_0^{\tau_0} \|B^*\varphi_0(t;\widetilde \eta,\tau_0) \|_{U}\, {\rm d}t  \right)^2 \leqslant\displaystyle \liminf_{k\to \infty} \left(\int_0^{\tau_{\varepsilon_{j_k}}} \|B^*\varphi_{\varepsilon_{j_k}}(t;\widehat \eta_{\varepsilon_{j_k}},\tau_{\varepsilon_{j_k}})\|_{U}\, {\rm d}t  \right)^2 \label{SC8}.
\end{equation}
From the definitions of $J_0^{\tau_0}(\widetilde \eta)$ and of $J_{\varepsilon_{j_k}}^{\tau_{\varepsilon_{j_k}}}(\widetilde \eta_{\varepsilon_{j_k}})$, as well as from \eqref{SC6}, \eqref{SC8} and the optimality of $\widehat \eta_{\varepsilon_{j_k}}$ to $(JP)_{\varepsilon_{j_k}}^{\tau_{\varepsilon_{j_k}}}$, we obtain that
\begin{eqnarray}
J_0^{\tau_0}(\widetilde \eta) \leqslant \displaystyle \liminf_{k \to \infty} J^{\tau_{\varepsilon_{j_k}}}_{\varepsilon_{j_k}}(\widehat \eta_{\varepsilon_{j_k}}) \leqslant \displaystyle \liminf_{k \to \infty} J^{\tau_{\varepsilon_{j_k}}}_{\varepsilon_{j_k}}(\widehat \eta_0). \label{SC9}
\end{eqnarray}

We now prove that
\begin{eqnarray}
\displaystyle \liminf_{k\to \infty} J^{\tau_{\varepsilon_{j_k}}}_{\varepsilon_{j_k}}(\widehat \eta_0) = J_0^{\tau_0}({\widehat \eta_0}). \label{SC16}
\end{eqnarray}
Firstly, we remark that, according to the second assertion of Proposition \ref{cons_abs_1}, we have
\begin{eqnarray}
\left\langle \psi,\varphi_{\varepsilon_{j_k}}(0;\widehat \eta_0,\tau_{\varepsilon_{j_k}}) \right\rangle \to \left\langle \psi, \varphi_0(0;\widehat \eta_0,\tau_0)\right\rangle \label{SC11}.
\end{eqnarray}
We next claim that
\begin{eqnarray}
\int_0^{\tau_{\varepsilon_{j_k}}} \|B^*\varphi_{\varepsilon_{j_k}}(t;\widehat \eta_0,\tau_{\varepsilon_{j_k}})\|_{U}\, {\rm d}t  \to \int_0^{\tau_0} \|B^*\varphi_0(t;\widehat \eta_0,\tau_0) \|_{U}\, {\rm d}t . \label{SC12}
\end{eqnarray}
Indeed, given $\delta \in (0,{\tau_0})$, by using the first estimate in \eqref{unif_decay}, we obtain that there is a natural number $k$  such that $\tau_{\varepsilon_{j_k}}>{\tau_0}-\delta$ and such that
\begin{align*}
C_k&:= \left| \int_0^{\tau_{\varepsilon_{j_k}}} \|B^*\varphi_{\varepsilon_{j_k}}(t;\widehat \eta_0,\tau_{\varepsilon_{j_k}}) \|_{U}\, {\rm d}t  - \int_0^{\tau_0} \|B^*\varphi_0(t;\widehat \eta_0,\tau_0) \|_{U}\, {\rm d}t  \right|
\\
&\leqslant \int_0^{{\tau_0}-\delta} \| B^*(\varphi_{\varepsilon_{j_k}}(t;\widehat \eta_0,\tau_{\varepsilon_{j_k}})-\varphi_0(t;\widehat \eta_0,\tau_0))\|_U\, {\rm d}t
+2\int_{{\tau_0}-\delta}^{\max(\tau_0,\tau_{\varepsilon_{j_k}})}  \| B^*\widehat\eta_0\|_U\,{\rm d}t.
\end{align*}
This yields  that
\begin{equation}\label{SC14}
C_k \leqslant \int_0^{{\tau_0}-\delta} \|B^*( \varphi_{\varepsilon_{j_k}}(t;\widehat \eta_0,\tau_{\varepsilon_{j_k}})-\varphi_0(t;\widehat \eta_0,\tau_0))\|_U\, {\rm d}t + \delta_k,
\end{equation}
where $\delta_k\rightarrow 0$ as $k\rightarrow\infty$.
From \eqref{SC14} and the second assertion in Proposition \ref{cons_abs_1}, \eqref{SC12} follows at once.
Moreover, using the definitions of $J^{\tau_{\varepsilon_{j_k}}}_{\varepsilon_{j_k}}(\widehat \eta_0) $ and of $J^{\tau_0}_0(\widehat \eta_0) $, as well as \eqref{SC11}-\eqref{SC12}, we obtain \eqref{SC16}. Inequality
\eqref{SC4}  is then deduced from \eqref{SC9} and \eqref{SC16}. As already mentioned, \eqref{SC4} implies the first convergence in \eqref{SC1}.

We are now on the position to show the second convergence in \eqref{SC1}.
By the second assertion of Proposition \ref{cons_abs_1} and the first convergence of \eqref{SC1}, we have:
\begin{equation}\label{SC18}
\varphi_{\varepsilon_j}(0;\widehat \eta_{\varepsilon_j},\tau_{\varepsilon_j}) \to \varphi_0(0;\widehat \eta_0,\tau_0) \quad \mbox{and} \quad \displaystyle \liminf_{j \to \infty} \| \widehat\eta_{\varepsilon_j}\| \geqslant \|\widehat \eta_0\|.
\end{equation}

Two observations are given in order. Firstly,
 there exists $C>0$ such that
\begin{align}
\left\lbrace \begin{array}{cc}
\|\varphi_0(t;\widehat \eta_0,\tau_0)\|_{C((0,\tau_0);X)}\leqslant \|\widehat \eta_0 \|\leqslant C & \qquad( t\in[0,\tau]),
\\
\|\varphi_{\varepsilon_j}(t;\widehat \eta_{\varepsilon_j},\tau_{\varepsilon_j})\|_{C((0,\tau_0);X)}\leqslant \|\widehat \eta_{\varepsilon_j} \|\leqslant C & \qquad( t\in [0,\tau_{\varepsilon_j}]).
\end{array} \right. \label{SC21}
\end{align}
Secondly,  by the first convergence in \eqref{SC1} and the second assertion of Proposition \ref{cons_abs_1}, for every $\delta\in(0,\tau_0)$, we have
\begin{eqnarray}
\varphi_{\varepsilon_j}(\cdot;\widehat \eta_{\varepsilon_j},\tau_{\varepsilon_j})\to \varphi_{\varepsilon_j}(\cdot;\widehat \eta_0,\tau_0)\quad \mbox{\rm in }\quad C([0,\tau_0-\delta);X). \label{SC22}
\end{eqnarray}
By (\ref{SC21}) and (\ref{SC22}), we can use the same way to show (\ref{SC12}) to prove that
\begin{eqnarray}
\int_0^{\tau_{\varepsilon_j}} \|B^*\varphi_{\varepsilon_j}(t;\widehat \eta_{\varepsilon_j},\tau_{\varepsilon_j})\|_{U}\, {\rm d}t  \to \int_0^{\tau_0} \|B^* \varphi_0(t;\widehat \eta_0,\tau_0)\|_U\, {\rm d}t  \label{SC20}.
\end{eqnarray}
From the definitions of $J^{\tau_{\varepsilon_j}}_{\varepsilon_j}(\widehat\eta_{\varepsilon_j})$, $J^{\tau_0}_0(\widehat\eta_0)$, as well as \eqref{SC18}-\eqref{SC20}, we have
$$
J_0^{\tau_0}(\widehat \eta_0) \leqslant\displaystyle \liminf_{j \to \infty} J^{\tau_{\varepsilon_j}}_{\varepsilon_j}(\widehat \eta_{\varepsilon_j}),
$$
which, combined with the optimality of $\widehat \eta_{\varepsilon_j}$  leads to
$$
\displaystyle\liminf_{j \to \infty} J^{\tau_{\varepsilon_j}}_{\varepsilon_j}(\widehat\eta_{\varepsilon_j}) \leqslant \displaystyle\liminf_{j \to \infty} J^{\tau_{\varepsilon_j}}_{\varepsilon_j}(\widehat \eta_0).
$$
From the above two inequalities and \eqref{SC16}, we have
$$
J^{\tau_0}_0(\widehat \eta_0)\leqslant \displaystyle \liminf_{j \to \infty} J^{\tau_{\varepsilon_j}}_{\varepsilon_j}(\widehat\eta_{\varepsilon_j}) \leqslant J^{\tau_0}_0(\widehat \eta_0),
$$
which clearly implies that
\begin{equation}\label{SC26}
J_0^{{\tau_0}}(\widehat \eta_0)= \displaystyle \liminf_{j \to \infty} J^{\tau_{\varepsilon_j}}_{\varepsilon_j}(\widehat\eta_{\varepsilon_j}).
\end{equation}
Finally,  from \eqref{SC26}, \eqref{SC18} and \eqref{SC20}, the second convergence in \eqref{SC1} follows at once.
This ends the proof Proposition \ref{strong convergence minimizer}. $\qquad \square$

We next deduce from Proposition \ref{strong convergence minimizer} the following corollary which will be used in the proof of the convergence of the time optimal control.

\

\begin{corollary}\label{lemme N convergence}
With the same notation and assumptions as those in Proposition \ref{strong convergence minimizer}, we have that
 \begin{eqnarray}\label{V convergence}
\displaystyle \lim_{\varepsilon \to 0+}N^*_{\varepsilon}(\tau)= N_0^*(\tau)\;\;\mbox{\rm and}
\;\; \displaystyle \lim_{\varepsilon \to 0+}V^*_{\varepsilon}(\tau)= V_0^*(\tau) \ \ \ (\tau\in (0, \widehat{\tau})).
\end{eqnarray}
(Here, $N^*_{\varepsilon}(\tau)$ and $V^*_{\varepsilon}(\tau)$ are defined by (\ref{huang5.1}) and (\ref{huang5.2}) respectively.)
\end{corollary}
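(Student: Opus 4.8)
The plan is to reduce everything to Proposition~\ref{strong convergence minimizer} via the explicit representation of the norm optimal control provided by Proposition~\ref{JT and NT}. Fix $\tau\in(0,\widehat\tau)$ and apply Proposition~\ref{strong convergence minimizer} to the constant family $\tau_\varepsilon\equiv\tau$; since $\tau_\varepsilon\to\tau>0$, its hypotheses are met, so the minimizers $\widehat\eta_\varepsilon$ of $J^{\tau}_\varepsilon$ converge strongly in $X$ to $\widehat\eta_0$, the minimizer of $J^{\tau}_0$.

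First I would check that, for all sufficiently small $\varepsilon\geqslant 0$, the pair $(A_\varepsilon,B)$ lies in the regime in which Proposition~\ref{JT and NT} applies. By Remark~\ref{added_rem}, property \eqref{uni-cont-3.5} holds for every $(A_\varepsilon,B)$. Moreover $\tau<\widehat\tau$ gives $\|\mathbb{T}^0_\tau\psi\|>r$, and the second assertion of Proposition~\ref{cons_abs_1} (applied to the constant family $\psi_\varepsilon\equiv\psi$) yields $\mathbb{T}^\varepsilon_\tau\psi\to\mathbb{T}^0_\tau\psi$ in $X$, whence $\mathbb{T}^\varepsilon_\tau\psi\notin B(0,r)$ for all small $\varepsilon$. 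Hence, for such $\varepsilon$ (in particular for $\varepsilon=0$), the second assertion of Proposition~\ref{JT and NT} gives
\begin{equation*}
N^*_\varepsilon(\tau)=\int_0^\tau\|B^*\varphi_\varepsilon(t;\widehat\eta_\varepsilon,\tau)\|_U\,{\rm d}t ,
\end{equation*}
and its third assertion gives $V^*_\varepsilon(\tau)=-\frac{1}{2}N^*_\varepsilon(\tau)^2$.

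It then remains to pass to the limit in the integral. Since $\widehat\eta_\varepsilon\to\widehat\eta_0$ strongly in $X$, the second assertion of Proposition~\ref{cons_abs_1}, composed with the reflection $t\mapsto\tau-t$ of $[0,\tau]$, gives $\varphi_\varepsilon(\cdot;\widehat\eta_\varepsilon,\tau)\to\varphi_0(\cdot;\widehat\eta_0,\tau)$ in $C([0,\tau];X)$; as $B^*\in\mathcal{L}(X,U)$, it follows that $B^*\varphi_\varepsilon(\cdot;\widehat\eta_\varepsilon,\tau)\to B^*\varphi_0(\cdot;\widehat\eta_0,\tau)$ in $C([0,\tau];U)$, and integrating over $(0,\tau)$ yields $N^*_\varepsilon(\tau)\to N^*_0(\tau)$. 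Combined with $V^*_\varepsilon(\tau)=-\frac{1}{2}N^*_\varepsilon(\tau)^2$, this also gives $V^*_\varepsilon(\tau)\to-\frac{1}{2}N^*_0(\tau)^2=V^*_0(\tau)$, which completes the proof. The hard part of the statement has essentially already been carried out in Proposition~\ref{strong convergence minimizer}; the only points here requiring care are the verification that $\mathbb{T}^\varepsilon_\tau\psi\notin B(0,r)$ for small $\varepsilon$, so that Proposition~\ref{JT and NT} is available, and the limit passage in the integral of the norm, which is a simpler variant (no boundary term arises) of the argument leading to \eqref{SC12}.
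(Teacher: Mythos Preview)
Your proof is correct. Both your argument and the paper's rest on Proposition~\ref{strong convergence minimizer} together with Proposition~\ref{JT and NT}, but the order is reversed: the paper quotes the intermediate identity \eqref{SC26} (convergence of the minimal values $J^{\tau}_\varepsilon(\widehat\eta_\varepsilon)\to J^{\tau}_0(\widehat\eta_0)$) to get $V^*_\varepsilon(\tau)\to V^*_0(\tau)$ first, and then reads off $N^*_\varepsilon(\tau)\to N^*_0(\tau)$ from the relation $V^*=-\tfrac12(N^*)^2$. You instead start from the strong convergence of the minimizers, pass to the limit in the integral formula $N^*_\varepsilon(\tau)=\int_0^\tau\|B^*\varphi_\varepsilon(t;\widehat\eta_\varepsilon,\tau)\|_U\,{\rm d}t$ (this is the constant-$\tau$ case of \eqref{SC20}), and deduce the $V^*$ convergence afterwards. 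Your route has the small advantage of being self-contained (it does not reach back into the proof of Proposition~\ref{strong convergence minimizer} for \eqref{SC26}), and you are more explicit than the paper in checking that $\mathbb{T}^\varepsilon_\tau\psi\notin B(0,r)$ for small $\varepsilon$ so that Proposition~\ref{JT and NT} is indeed available.
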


\begin{proof}
By  \eqref{SC26}, we have that
\begin{eqnarray}
J_0^{\tau}(\widehat \eta_0)=  \displaystyle \liminf_{\varepsilon \to 0+} J^{\tau}_{\varepsilon}(\widehat \eta_{\varepsilon}),
\end{eqnarray}
where $\widehat\eta_{\varepsilon}$, with $\varepsilon\geqslant 0$, is the minimizer of  $J_\varepsilon^\tau$.
Meanwhile, we obtain from (\ref{huang5.1}) and (\ref{huang5.2}) that
$ V^*_{\varepsilon}(\tau)=  J^{\tau}_{\varepsilon}(\widehat \eta_\varepsilon)$  and $ V_0^*(\tau)=J_0^{\tau}(\widehat \eta_0)$. These, along with the last assertion in Proposition \ref{JT and NT}, yield to \eqref{V convergence} and ends the proof. \qquad \end{proof}

\

We are now in a position to prove our main result, i.e., Theorem~\ref{th_main}.

\

{\em Proof of Theorem~\ref{th_main}.}
To prove \eqref{time_conv}, we observe from the first estimate in \eqref{unif_decay} that there exists $\hat{\tau}>0$ such that $\displaystyle \mathbb{T}_{\hat\tau}^{\varepsilon}\psi\in B(0,r)\;\;\mbox{\rm for all}\;\;\varepsilon\geqslant 0$.
 Hence,  $\tau^*_\varepsilon(M) \leqslant \hat{\tau}$ for all $\varepsilon\geqslant 0$.
Therefore, all cluster points of $(\tau^*_\varepsilon(M))_{\varepsilon>0}$ are finite. Let $\hat{t}$ be a cluster point of $(\tau^*_\varepsilon(M))_{\varepsilon>0}$, i.e., there is  a sequence $(\varepsilon_n)_{n\in \mathbb{N}}$ of positive numbers with
$\varepsilon_n \to 0$ such that  $ \hat{t}=\displaystyle \lim_{n \to \infty} \tau^*_{\varepsilon_n}(M)$.
For each $n$, we let $\tilde{u}_{\varepsilon_n}$ be the extension of the optimal control $u^*_{\varepsilon_n}$ over $(0,\infty)$ with zero value over $(\tau^*_{\varepsilon_n}(M),\infty)$.
Then, up to the extraction of a subsequence, we have
\begin{eqnarray}
\tilde{u}_{\varepsilon_n} \to \tilde{u}\quad \mbox{\rm  weakly* in }\quad L^\infty((0,\hat{t});U). \label{tt1}
\end{eqnarray}
This, combined with  Corollary \ref{clasica_1},  leads to
\begin{eqnarray}
\mathbb{T}_{\hat t}^{\varepsilon_n} \psi+ \Phi_{\hat t}^{\varepsilon_n} \tilde{u}_{\varepsilon_n} \to\mathbb{T}_{\hat t}\psi+ \Phi_{\hat t}\tilde{u}\quad \mbox{\rm weakly in }\quad X. \label{tt2}
\end{eqnarray}
From \eqref{tt1} and \eqref{tt2}, we see that $ \| \tilde{u} \|_{L^\infty((0,\hat{t});U)} \leqslant M$ and that $ \|\mathbb{T}_{\hat t} \psi+ \Phi_{\hat t} \tilde{u}\| \leqslant r$.
Consequently, $\hat{t}\geqslant \tau_0^*(M)$ and $\tilde{u}$ is an admissible control for $(TP)_0^M$.

At this point we assume, by contradiction, that $\hat{t} > \tau_0^*(M)$. Then there exists $\delta>0$ and $N(\delta)\in \mathbb{N}$
such that
$$
\tau^*_{\varepsilon_n}(M) > \tau_0^*(M)+\delta \qquad \qquad(n\geqslant N(\delta)).
$$
The above estimate and Proposition \ref{NT} imply that
\begin{equation}\label{tt3}
M=N^*_{\varepsilon_n}(\tau^*_{\varepsilon_n}(M)) <N^*_{\varepsilon_n}(\tau_0^*(M)+\delta) < N_{\varepsilon_n}^*(\tau_0^*(M)).
\end{equation}
On the other hand, from Corollary \ref{lemme N convergence} and Proposition~\ref{NT}, it follows that
\begin{equation}\label{tt4}
N^*_{\varepsilon_n}(\tau_0^*(M)+\delta) \to N_0^*(\tau_0^*(M)+\delta) \quad \mbox{and} \quad N_{\varepsilon_n}^*(\tau_0^*(M)) \to N_0^*(\tau_0^*(M))=M.
\end{equation}
Combining \eqref{tt3} and \eqref{tt4} leads to $ M\leqslant N_0^*(\tau_0^*(M)+\delta) \leqslant N_0^*(\tau_0^*(M))=M$,
which implies that $ N_0^*(\tau_0^*(M)+\delta)=N_0^*(\tau_0^*(M))$. This contradicts to the strict monotonicity of $N_0^*(\cdot)$. Hence $\hat{t}=\tau_0^*(M)$, which clearly implies \eqref{time_conv}.

\

To prove \eqref{con_conv}, let $(\varepsilon_n)_{n >0}$ be a family such that $\varepsilon_n \to 0$. For each $n$, denote $\tilde{u}_{\varepsilon_n}$ the extension of the optimal control $u^*_{\varepsilon_n}$ over $(0,\infty)$, with zero value over $(\tau^*_{\varepsilon_n}(M),\infty)$. It suffices to show
\begin{eqnarray}
\tilde{u}_{\varepsilon_n} \to u_0^*\quad \mbox{\rm stongly in }\quad L^2((0,\tau_0^*(M));U). \label{ci1}
\end{eqnarray}
It is clear that up to the extraction of a subsequence, there exists a subsequence $(\varepsilon_{n_k})_{k\in\mathbb{N}}$ such that
$$
\tilde{u}_{\varepsilon_{n_k}}\to \widetilde u_0\quad \mbox{\rm  weakly* in }\quad L^\infty((0,\tau^*_0(M));U)
$$
for some $\widetilde u_0 \in L^\infty((0,\tau^*_0(M));U)$. This, combined with \eqref{time_conv} and with Corollary \ref{clasica_1}, yields that
$$
\|\mathbb{T}_{\tau_0^*(M)}^{\varepsilon_n} \psi+ \Phi_{\tau_0^*(M)}^{\varepsilon_n} \widetilde u_0 \| \leqslant r,\quad \mbox{and} \quad \|\widetilde u_0\|_{L^\infty((0,\tau^*_0(M));U)}  \leqslant M.
$$
The above two estimates imply that $\widetilde u_0=u_0^*$. In particular, we have
\begin{eqnarray}
\widetilde u_{\varepsilon_{n_k}} \to u_0^*\quad \mbox{\rm  weakly in }\quad L^2((0,\tau_0^{*}(M));U). \label{ci2}
\end{eqnarray}
Meanwhile, it follows  by Lemma \ref{equivalence J-T} that
$$
\|\widetilde u_{\varepsilon_{n_k}}\|=M\;\; \mbox{\rm for a.e. }\;\; t\in (0,\tau^*_{\varepsilon_{n_k}}(M))
\;\;\mbox{\rm and}\;\;\|u_0^*\|=M\;\; \mbox{\rm for a.e. }\;\; t\in (0,\tau_0^*(M)).
$$
Using again \eqref{time_conv} we obtain that
\begin{eqnarray*}
\| \widetilde u_{\varepsilon_{n_k}}\|_{ L^2((0,\tau_0^*(M));U)} \to \| u_0^*\|_{ L^2((0,\tau_0^*(M));U)}. \label{ci3}
\end{eqnarray*}
The last estimate, combined with   \eqref{ci2}, implies \eqref{ci1}.

\

To prove \eqref{con_conv_unif},
let $\widehat \eta_\varepsilon$ be the minimizer of
$J_\varepsilon^{\tau_\varepsilon^*(M)}$ for each $\varepsilon \geqslant 0$.
We first claim that
$\widehat{\eta_\varepsilon} \not =0$ for each $\varepsilon \geqslant 0$. Suppose by contradiction that
$\widehat{\eta_\varepsilon} =0$ for some $\varepsilon\geqslant 0$. Then by the third assertion of  Lemma \ref{JT}, we have that
$\mathbb{T}_{\tau^*_\varepsilon}\psi\in B(0,r)$. Hence, the null control is optimal to the problem $(TP)^M_\varepsilon$, which leads to a
contradiction, since $\psi\notin B(0,r)$. For the sake of simplicity, we denote, for each $\varepsilon\geqslant 0$,
$$
\widehat \varphi_\varepsilon(t)=\varphi_\varepsilon(t;\widehat \eta_\varepsilon,
\tau^*_\varepsilon(M)) \qquad \qquad(t\in [0, \tau^*_\varepsilon(M)]).
$$
Using \eqref{time_conv} and the second assertion in Proposition \ref{cons_abs_1}, we have
that
\begin{eqnarray} \label{cii1}
\widehat \varphi_\varepsilon \to \widehat \varphi_0 \quad \mbox{\rm strongly  in }\quad C([0,\tau_0^*(M)-\delta];X),
\end{eqnarray}
for every $\delta\in(0,\tau_0^*(M))$.
From Proposition \ref{equivalence J-T}, we have that
$$
u^*_\varepsilon(t)= M \frac{ B^* \widehat \varphi_\varepsilon(t) }{ \|B^* \widehat \varphi_\varepsilon(t)\|_U}
\qquad\qquad(t\in (0,\tau^*_\varepsilon(M))),
$$
\vspace{-0.5cm}
$$
u_0^*(t)= M \frac{ B^* \widehat\varphi_0(t)}{ \|B^* \widehat \varphi_0(t)\|_U}  \qquad\qquad(t\in (0,\tau_0^*(M))).
$$
Given $\delta \in (0,\tau_0^*(M))$, the above equalities imply  that for each $t\in[0,\tau_0^*(M)-\delta]$,
\begin{equation}
u^*_\varepsilon(t)-u_0^*(t)= G_\varepsilon(t)\left(
\| B^*\widehat\varphi_0(t)\|_U\, B^* \widehat \varphi_\varepsilon(t)
 - \| B^*\widehat\varphi_\varepsilon(t)\|_U\, B^*  \widehat\varphi_0(t)\right). \label{cii2}
\end{equation}
where $G_\varepsilon(t)=\displaystyle\frac{M}{\| B^* \widehat\varphi_\varepsilon(t)\|_U \| B^* \widehat\varphi_0(t)\|_U}$.
We next claim that there exists $C_\delta >0$ with
\begin{eqnarray}
\| B^* \widehat\varphi_0(t)\|_U \geqslant C_\delta \qquad\qquad(t\in[0,\tau_0^*(M)-\delta]).\label{cii3}
\end{eqnarray}
By contradiction, suppose that \eqref{cii3} failed. Then, there would be a sequence
$(t_n)_{n>0} \subset [0,\tau_0^*(M)-\delta]$ such that
\begin{equation}\label{5.50}
\|  B^* \widehat\varphi_0(t_n)\|_U < \frac{1}{n} \qquad(n\in \mathbb{N}).
\end{equation}
Since $t_n\in [0,\tau_0^*(M)-\delta]$, there is a subsequence, denoted in the same way, such that
$t_n \to s_0$ for some $s_0\in[0,\tau_0^*(M)-\delta]$.
This, along with (\ref{5.50}), yields that  $\|B^* \widehat\varphi_0(s_0)\|_U=0$. Then,
by the assumption \eqref{Lin_unique}, we have that $ \widehat \eta_0=0$, which, combined with the third assertion of Lemma \ref{JT}, leads to a contradiction. Hence, \eqref{cii3} holds.

At this point we note that \eqref{cii1} implies that
$$
\| B^* \widehat\varphi_\varepsilon(t)\|_U \to \|  B^*\widehat\varphi_0(t)\|_U\quad
\mbox{\rm uniformly for }\quad t\in[0,\tau_0^*(M)-\delta].
$$
This, along with \eqref{cii3}, yields that there exits $\varepsilon_0>0 $ such that when
$\varepsilon \leqslant \varepsilon_0$,
\begin{eqnarray}
\| B^* \widehat\varphi_\varepsilon(t)\|_U \|B^* \widehat\varphi_0(t)\|_U \geqslant \frac{C_\delta^2}{2}
 \qquad(t\in [0,\tau_0^*(M)-\delta]). \label{cii4}
\end{eqnarray}
Finally, using \eqref{cii4}, \eqref{cii2} and \eqref{cii1} it follows that
$$
u^*_\varepsilon \to u_0^*\quad \mbox{\rm strongly in }\quad L^\infty((0,\tau_0^*(M)-\delta);U),
$$
which ends the proof. \qquad $\square$

\section{Examples, comments and open questions}
\label{sec_final}
\setcounter{equation}{0}

\

In the first part of this section we give two examples of application of Theorem \ref{th_main} to
families (depending on a small parameter $\varepsilon$) of systems governed by parabolic partial
differential equations. The main example concerns the homogenization of parabolic equations, as already
stated in the Introduction. However, for the sake of simplicity, we begin by a more elementary example, borrowed from \cite{Yu}.

\

\begin{example}
Let  $\Omega \subset \mathbb{R}^N$ be a bounded open set with a $\partial \Omega$ of class $C^2$. For each
$\varepsilon\geqslant 0$, $\psi\in L^2(\Omega)$ and $u\in L^\infty(0,\infty);L^2(\Omega))$ we denote by
$z_\varepsilon:=z_\varepsilon (t;\psi,u)$ the solution of the initial and boundary value problem
\begin{equation}\label{heat_osc_simple}
\dot z_\varepsilon(x,t)-\,  \triangle z_\varepsilon(x,t)-a_\varepsilon(x)z(x,t)=\chi_\omega(x) u(x,t) \quad\qquad(x\in \Omega,\ \ t\geqslant 0),
\end{equation}
\vspace{-0.75cm}
\begin{equation}\label{bound_osc_simple}
z_\varepsilon(x,t)=0 \qquad(x\in\partial\Omega,\ \ t\geqslant 0),
\end{equation}
\vspace{-0.75cm}
\begin{equation}\label{init_osc_simple}
z_\varepsilon(x,0)=\psi(x) \qquad(x\in \Omega),
\end{equation}
where  $a_\varepsilon \in L^\infty(\Omega)$ and $\omega\subset \Omega$ is an open and nonempty subset with its
characteristic function $\chi_\omega$. We further assume that
$$
\displaystyle \lim_{\varepsilon \to 0+}\|a_\varepsilon -a_0\|_{L^\infty(\Omega)}=0 \quad \mbox{and} \quad \|a_\varepsilon\|_{L^\infty(\Omega)} \leqslant \lambda_1\;\; (\varepsilon\geqslant 0),
$$
where $\lambda_1$ is the first eigenvalue of $-\triangle$ with the homogeneous Dirichlet boundary condition.
Given $M,\ r>0$, for each $\varepsilon\geqslant 0$, we consider the time optimal control problem:

\

\noindent ${(TPH1)^M_\varepsilon}\;\;\;\;
\tau^*_{\varepsilon}(M):= \displaystyle\min \left\lbrace t \ |\ \|z_\varepsilon(t;\psi,u))\|_{L^2(\Omega)}\leqslant r\right\rbrace $,
 where the minimum  is taken over the closed ball in $L^\infty((0,\infty); L^2(\Omega))$,
 centered at the origin with radius $M$.

\

\noindent With the above notation and assumptions, it is clear that for every $\varepsilon\geqslant 0$
 the problem $(TPH1)^M_\varepsilon$ admits a unique solution
$(\tau_\varepsilon^*(M),u_\varepsilon^*(M))$. We have the following result, which has been proved,
with a different method, in \cite{Yu}.

\

\begin{proposition}\label{corollary6.2}
With the notation and assumptions above, let, for every $\varepsilon\geqslant 0$, $(\tau_\varepsilon^*(M),u^*_{\varepsilon}(M))$
 be the the solution of the time optimal control problem ${(TPH1)^M_\varepsilon}$. Then we have that
$\displaystyle \tau^*_\varepsilon(M) \to \tau_0^*(M)$, and  $u^*_{\varepsilon} \to u_0^*$ strongly in $L^2((0,\tau_0^*(M));L^2(\Omega))$.

Moreover, for every $\delta\in (0,\tau^*_0(M))$, we have $u^*_{\varepsilon} \to u_0^*$ strongly in $ L^\infty((0,\tau_0^*(M)-\delta);L^2(\Omega))$.
\end{proposition}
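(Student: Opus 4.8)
The plan is to recast \eqref{heat_osc_simple}--\eqref{init_osc_simple} in the abstract form $(\Sigma_\varepsilon)$ and then to verify, one by one, the hypotheses (C1)--(C4) of Theorem~\ref{th_main} together with the unique continuation condition \eqref{Lin_unique}; the conclusion is then immediate. Concretely, take $X=U=L^2(\Omega)$, let $A_\varepsilon=-\triangle-a_\varepsilon(x)$ with the $\varepsilon$-independent domain $X_1=H^2(\Omega)\cap H^1_0(\Omega)$, and let $B\in\mathcal{L}(U,X)$ be the multiplication operator $Bu=\chi_\omega u$, so that $B^*=B$. With this identification the state of \eqref{heat_osc_simple}--\eqref{init_osc_simple} equals $\mathbb{T}^\varepsilon_t\psi+\Phi^\varepsilon_t u$ and, assuming as usual $\psi\notin B(0,r)$, the problem $(TPH1)^M_\varepsilon$ is exactly $(TP)^M_\varepsilon$; existence and uniqueness of its solution are then furnished by the abstract theory (Remark~\ref{time-norm-control}) once (C1)--(C4) are known.

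First I would check (C1)--(C3). The embedding $X_1\subset X$ is compact by the Rellich--Kondrachov theorem, and each $A_\varepsilon$ is self-adjoint on $X_1$ since it is a bounded self-adjoint perturbation of the Dirichlet Laplacian; this is (C1). For (C2), the variational characterization of the first Dirichlet eigenvalue $\lambda_1$ gives $\langle A_\varepsilon\varphi,\varphi\rangle=\|\nabla\varphi\|^2-\int_\Omega a_\varepsilon|\varphi|^2\geqslant(\lambda_1-\|a_\varepsilon\|_{L^\infty})\|\varphi\|^2$; since $\|a_\varepsilon\|_{L^\infty}\leqslant\lambda_1$ and $a_0\not\equiv\lambda_1$, the operator $A_0$ is strictly positive, and moreover $|\langle A_\varepsilon\varphi,\varphi\rangle-\langle A_0\varphi,\varphi\rangle|\leqslant\|a_\varepsilon-a_0\|_{L^\infty}\|\varphi\|^2$, which, combined with the coercivity $\langle A_0\varphi,\varphi\rangle\geqslant c_0\|\varphi\|^2$, yields the two-sided estimate in (C2) for all small $\varepsilon$, with constants independent of $\varepsilon$. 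For (C3), setting $\psi_\varepsilon:=A_\varepsilon^{-1}f-A_0^{-1}f$ one has $-\triangle\psi_\varepsilon-a_\varepsilon\psi_\varepsilon=(a_\varepsilon-a_0)A_0^{-1}f$; testing against $\psi_\varepsilon$ and using the uniform coercivity established above gives $\|\psi_\varepsilon\|\leqslant C\|a_\varepsilon-a_0\|_{L^\infty}\|A_0^{-1}f\|\to0$, so that $A_\varepsilon^{-1}f\to A_0^{-1}f$ in $X$ (in fact in operator norm).

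The substantive step is (C4), which here amounts to the observability inequality $\|\mathbb{T}^\varepsilon_\tau\psi\|_{L^2(\Omega)}\leqslant K(\varepsilon,\tau)\int_0^\tau\|\mathbb{T}^\varepsilon_t\psi\|_{L^2(\omega)}\,{\rm d}t$ for the uncontrolled equation \eqref{heat_osc_simple}. I would deduce it from the classical observability estimate for the heat equation with a bounded zeroth order potential --- obtained from global Carleman estimates (Fursikov--Imanuvilov) or from the Lebeau--Robbiano spectral inequality --- taken in its $L^1$-in-time form. The crucial quantitative point is that the observability constant depends on the potential only through an upper bound for $\|a_\varepsilon\|_{L^\infty(\Omega)}$, which is $\leqslant\lambda_1$ here, and depends continuously on $\tau$ (one has estimates of the type $K(\tau)\leqslant C\exp(C/\tau)$ for small $\tau$); hence $K(\varepsilon,\tau)$ can be chosen uniformly in $\varepsilon$, and (C4) holds. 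With (C1)--(C4) verified, Theorem~\ref{th_main} delivers $\tau^*_\varepsilon(M)\to\tau^*_0(M)$ and $u^*_\varepsilon\to u^*_0$ strongly in $L^2((0,\tau^*_0(M));L^2(\Omega))$.

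For the last assertion I would verify \eqref{Lin_unique}: if $\eta\in L^2(\Omega)$ and $\chi_\omega\mathbb{T}^\varepsilon_{t_0}\eta=0$ for some $t_0>0$, then the solution $w(t)=\mathbb{T}^\varepsilon_t\eta$ of \eqref{heat_osc_simple}--\eqref{init_osc_simple} with $u=0$ vanishes on $\omega$ at time $t_0$; by the strong unique continuation property for parabolic operators with $L^\infty$ potentials (a solution whose time-$t_0$ slice vanishes on an open set vanishes on the whole slice), $w(\cdot,t_0)\equiv0$ on $\Omega$, and backward uniqueness then forces $w\equiv0$, i.e.\ $\eta=0$; this unique continuation property is also established, by other means, in \cite{Yu}. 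Invoking the final part of Theorem~\ref{th_main} gives $u^*_\varepsilon\to u^*_0$ strongly in $L^\infty((0,\tau^*_0(M)-\delta);L^2(\Omega))$ for every $\delta\in(0,\tau^*_0(M))$. The main obstacle in this scheme is controlling the observability constant in (C4) uniformly in $\varepsilon$; for the present lower-order perturbation this is comparatively harmless, since the Carleman weights involve only the fixed principal part $-\triangle$ and the potential enters merely through its uniformly bounded $L^\infty$ norm --- in sharp contrast with the homogenization problem of Proposition~\ref{corollary6.4}, where the principal coefficients themselves oscillate and the uniform observability is the genuine difficulty.
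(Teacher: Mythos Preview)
Your proof is correct and follows exactly the paper's approach: set $X=U=L^2(\Omega)$, $X_1=H^2(\Omega)\cap H^1_0(\Omega)$, $A_\varepsilon=-\triangle-a_\varepsilon$, $B=\chi_\omega$, verify (C1)--(C4) and \eqref{Lin_unique}, and invoke Theorem~\ref{th_main}. The paper simply asserts these hypotheses are ``easily checked'' while you supply the verifications; the one minor over-emphasis is on the $\varepsilon$-uniformity of the observability constant in (C4), which the hypothesis does not actually require (only continuity of $\tau\mapsto K(\varepsilon,\tau)$ for each fixed $\varepsilon$), so that step is lighter than you make it and the contrast you draw with Proposition~\ref{corollary6.4} on this particular point is not where the real difficulty of the homogenization case lies.
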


\begin{proof}
The results come directly from Theorem \ref{th_main}, with the following choice of spaces and operators:
$$
X=L^2(\Omega),\quad  X_1 = H^2(\Omega)\cap H_0^1(\Omega), \quad
 A_\varepsilon = - \triangle -a_\varepsilon\ \ (\varepsilon\geqslant 0), \ \ U=L^2(\Omega) ,\quad B=\chi_\omega.
$$
Indeed, it can be easily checked that assumptions (C1)-(C4), as well as \eqref{Lin_unique} are all satisfied for the above choice of $X,\ U,\ (A_\varepsilon)$ and $B$. \qquad \end{proof}
\end{example}

\

Our main example of application of Theorem \ref{th_main}, already mentioned in Section \ref{sec_intro}, is described below.

\

\begin{example}
As mentioned in the Introduction, our main result in Theorem \ref{th_main} can be applied to the family of time optimal control problems  ${(TPH)^M_\varepsilon}$ (introduced in Section \ref{sec_intro}), with $\varepsilon\geqslant 0$, provided that we make the appropriate choice
of spaces and operators. More precisely, we set
$$
X=L^2(\Omega), \qquad X_1=H^2(\Omega)\cap H_0^1(\Omega),
$$
For each $\varepsilon>0$, the operator $A_\varepsilon\in {\cal L}(X_1,X)$ is  defined
by
$$
(A_\varepsilon\psi)(x) = -{\rm div}\, \left(a\left(\frac{x}{\varepsilon}\right)\nabla \psi\right)
\qquad(\psi\in X_1, \ \ x\in \Omega),
$$
where $a=(a_{ij})_{1\leqslant i, j\leqslant n}$ is a matrix-valued function satisfying the assumptions
stated at the beginning of Section \ref{sec_intro}.
The operator $A_0$ is defined by
$$
(A_0 \psi)(x) = -{\rm div}\, \left(a_0\nabla \psi\right)
\qquad(\psi\in X_1, \ \ x\in \Omega),
$$
where the matrix $a_0$ is given, according to homogenization theory (see, for instance,
Allaire \cite{All_rev} or Cioranescu and Donato  \cite[Theorem 6.1]{Doina}), by
$$
a_0 \lambda = \int_{[0,1]^n} a(y)\nabla w_\lambda(y)\, {\rm d}y \qquad\qquad(\lambda\in \mathbb{R}^n).
$$
Here $w_\lambda:\mathbb{R}^n\to \mathbb{R}$ is defined, for each $\lambda\in \mathbb{R}^n$, as the solution of
$$
\left\{\begin{array}{lcr} -{\rm div}\, (a(y)\nabla w_\lambda(y))=0 &\ & y\in [0,1]^n,\\
\ &\ &\ \\
w_\lambda (y+e_j) -\lambda\cdot (y+e_j)=w_\lambda (y) -\lambda\cdot y,&\ & (j\in \{1,\dots,n\},\ y\in \mathbb{R}^n),\\
\ &\ &\ \\
\displaystyle\int_{[0,1]^n} (w_\lambda (y) -\lambda\cdot y)\, {\rm d} y=0, &\ &\
\end{array}\right.
$$
where $(e_j)_{1\leqslant j\leqslant n}$ is the standard basis in $\mathbb{R}^n$.
According to standard results (see, for instance \cite[Corollary 6.10, Proposition 6.12]{Doina}),
the matrix $a_0$ is strictly positive.

We also introduce the input space and the control operator by setting
$$
U=L^2(\Omega) \qquad \mbox{\rm and }\quad B= \chi_\omega.
$$

\

{\em Proof  of Proposition \ref{corollary6.4}.}
In order to apply the results from Theorem \ref{th_main} we first verify that assumptions (C1)-(C4) hold with $X,\ U, \ (A_\varepsilon)_{\varepsilon\geqslant 0}$ and $B$ chosen as above. Firstly, using the compactness of the embedding $H_0^1(\Omega)$ into $L^2(\Omega)$ and standard calculations,
it can be easily checked that the above defined family  $(A_\varepsilon)_{\varepsilon\geqslant 0}$
satisfies assumptions (C1) and (C2). The fact that (C3) is also satisfied follows from \cite[Theorem 6.1]{Doina} and again from the compactness of the embedding $H_0^1(\Omega)\subset L^2(\Omega)$. Assumption (C4) is the standard $L^\infty$-null controllability of parabolic equation, which has been proven, for instance, in Wang \cite{wang_linf}.

Moreover, for every $\varepsilon \geqslant 0$, the assumption \eqref{Lin_unique} is the strong unique continuation property, which is known to hold for the considered equations
(see, for instance, Lin \cite{Lin}).
\qquad
$\square$
\end{example}

\

We end up this paper with a series of comments and by stating some open questions.
Firstly, as mentioned above, the strong unique continuation assumption \eqref{Lin_unique} holds for parabolic equations with control distributed in an open set inside the domain but it does not hold, in general, in the case of boundary control. This is the main obstacle in extending our results on $L^\infty$ convergence  to boundary control problems,
for which the control operator is no longer bounded. However, we think that the other convergence properties (for optimal time and optimal controls) can be extended to the case of unbounded control operators.
Note that the perturbation analysis in Section \ref{sec_approx} seems easy to be extended for unbounded control operators (so to boundary control problems).

An interesting problem is the extension of the results in this work to the case
of more general targets (not only closed balls centered in the origin). If we consider,
for instance, targets which are closed balls centered at an arbitrary point of the state space, then our method to
establish the equivalence of time and norm optimal control problems is failing. The case of point targets seems also quite delicate.
This is due, in particular, to  the fact that, for parabolic PDEs, observability estimates which are uniform with respect to $\varepsilon$ are known only in one space dimension, see \cite{Lop_Zuaz}.

\section{Appendix}

\

In this section we provide a simple proof of the maximum principle for time and norm optimal control problems for infinite dimensional linear systems, in the case in which the target is a closed ball in the state space. The fact that this proof is proof can be done in few lines is due to the fact that we take advantage from the linearity of the system and from the fact that the target set has non empty interior.

\

{\em Proof of Theorem~\ref{bb}.}
For the sake of simplicity, we use the notation $\tau^*=\tau^*(M)$. Remark that,
since $\psi\not\in B(0,r)$, we necessarily have $\tau^*>0$.
For each $\tau>0$ we set ${\cal R}_\tau =\mathbb{T}_\tau \psi+\Phi_\tau \mathcal{U}_{M}$.

As the first step of our proof, we show that
\begin{equation}\label{disjoint}
{\cal R}_{\tau^*}\cap \mathring B(0,r)=\emptyset,\;\;\mbox{\rm with}\;\; \mathring B(0,r) \;\;\mbox{\rm the interior of}\;\; B(0,r).
\end{equation}
Indeed, if (\ref{disjoint}) did not hold, then we would have  $\tilde u\in \mathcal{U}_{M}$ such that
$\|z(\tau^*;\psi,\tilde u)\|<r$. Since $\|z(0;\psi,\tilde u)\|=\|\psi\|>r$, the above, as well as
 the continuity of the map $t\mapsto z(t;\psi,\tilde u)$, implies that there exists $\tau_0\in (0,\tau^*)$ such that
$\|z(\tau_0;\psi,\tilde u)\|=r$, which contradicts the optimality of $\tau^*$.
Consequently, we have \eqref{disjoint}.

As the second step of our proof, we remark that the sets ${\cal R}_{\tau^*}$ and $\mathring B(0,r)$ are non empty,
convex and they have no common point.
Moreover, since $\mathring B(0,r)$ is open, we can apply the geometric version of
the Hahn-Banach theorem (see, for instance, in Brezis \cite[Theorem 1.6]{brezis}) to obtain that there exists
a hyperplane separating ${\cal R}_{\tau^*}$ and $B(0,r)$.
This means that there exists $\rho\in \mathbb{R}$ and $ \eta\in X$, $ \eta \not= 0$ such that
\begin{equation}\label{very_first}
\langle  \eta,w\rangle \leqslant \rho \quad(w\in {\cal R}_{\tau^*}) \qquad \mbox{and} \qquad \langle  \eta,w\rangle \geqslant \rho \quad(w\in  B(0,r)).
\end{equation}
Finally, it is obvious that $ \|\mathbb{T}_{\tau^*}\psi+\Phi_{\tau^*}u^*\|=r$. This combines with \eqref{very_first}, leads to
$$
\langle  \eta,\mathbb{T}_{\tau^*}\psi+\Phi_{\tau^*}u^*\rangle\geqslant \ \langle  \eta,w\rangle \qquad(w\in {\cal R}_{\tau^*}).
$$
The above formula clearly implies the maximum principle \eqref{PM}. The transversality condition follows directly from  \eqref{very_first} which ends the proof. \qquad $\square$

\

{\em Proof of Theorem~\ref{bb_n}.} Since $\|\mathbb{T}_\tau\psi\|>r$, we necessarily have $N^*(\tau)>0$.
Set
$$ {\cal R}_\tau := \left\lbrace z(\tau;\psi,f)\;\; |\; \; \|f\|_{L^\infty((0,\tau);U)}\leqslant N^*(\tau) \right\rbrace.
$$
The key is to show that
\begin{equation}\label{disjoint_n}
{ \cal R}_{\tau}\cap \mathring B(0,r)=\emptyset,\;\;\mbox{\rm with}\;\; \mathring B(0,r) \;\;\mbox{\rm the interior of}\;\; B(0,r).
\end{equation}
We suppose by contradiction that thers exist $\delta>0$ and $\tilde f$ in $L^\infty((0,\tau); U)$ such that
\begin{equation}\label{contradiction3.9}
\|z(\tau;\psi,\tilde f)\|\leqslant r-\delta \quad \mbox{ and} \quad \| \tilde f\|_{L^\infty((0,\tau);U)} \leqslant N^*(\tau).
\end{equation}
Since the map $\Phi_\tau: L^\infty((0,\tau);U)\rightarrow X$ is continuous,
 there exists $\alpha\in (0, 1)$ such that
$$
\|z(\tau;\psi,\alpha \tilde f)-z(\tau;\psi, \tilde f) \| \leqslant \delta.
$$
This, along with (\ref{contradiction3.9}), indicates that $ \|z(\tau;\psi, \alpha \tilde f)\| \leqslant r$.

Moreover, it follows from the first inequality in (\ref{contradiction3.9}) that $ \| \alpha \tilde f\| =\alpha N^*(\tau)< N^*(\tau)$, which combines the above inequality, contradicts to the optimality of $N^*(\tau)$ to the problem $(NP)^\tau$. Consequently, \eqref{disjoint_n} holds.

From \eqref{disjoint_n}, we can follow step by step the proof of  Theorem \ref{bb} to obtain \eqref{PM_n} and the transversality condition. This ends the proof.\qquad $\square$

\bibliographystyle{siam}
\bibliography{reference_wang}

\begin{thebibliography}{10}

\bibitem{All_rev}
{\sc G.~Allaire}, {\em A brief introduction to homogenization and miscellaneous
  applications}, in Mathematical and numerical approaches for multiscale
  problem, vol.~37 of ESAIM Proc., EDP Sci., Les Ulis, 2012, pp.~1--49.

\bibitem{Ar_Ray}
{\sc N.~Arada and J.-P. Raymond}, {\em Time optimal problems with {D}irichlet
  boundary controls}, Discrete Contin. Dyn. Syst., 9 (2003), pp.~1549--1570.

\bibitem{Cauchy}
{\sc W.~Arendt}, {\em Vector-Valued Laplace Transforms and Cauchy Problems},
  Monographs in mathematics, Birkhause Verlag, 2001.

\bibitem{b_book}
{\sc V.~Barbu}, {\em Analysis and control of nonlinear infinite-dimensional
  systems}, vol.~190 of Mathematics in Science and Engineering, Academic Press,
  Inc., Boston, MA, 1993.

\bibitem{BLP}
{\sc A.~Bensoussan, J.~L. Lions, and G.~Papanicolaou}, {\em Asymptotic analysis
  for periodic structures}, vol.~5 of Studies in Mathematics and its
  Applications, North-Holland Publishing Co., Amsterdam-New York, 1978.

\bibitem{brezis}
{\sc H.~Brezis}, {\em Analyse fonctionnelle}, Collection Math\'ematiques
  Appliqu\'ees pour la Ma\^\i trise. [Collection of Applied Mathematics for the
  Master's Degree], Masson, Paris, 1983.
\newblock Th{\'e}orie et applications. [Theory and applications].

\bibitem{Castro-Zuaz_Proc}
{\sc C.~Castro and E.~Zuazua}, {\em Some topics on the control and
  homogenization of parabolic partial differential equations}, in
  Homogenization, 2001 ({N}aples), vol.~18 of GAKUTO Internat. Ser. Math. Sci.
  Appl., Gakk\=otosho, Tokyo, 2003, pp.~49--93.

\bibitem{Doina}
{\sc D.~Cioranescu and P.~Donato}, {\em An introduction to homogenization},
  vol.~17 of Oxford Lecture Series in Mathematics and its Applications, The
  Clarendon Press, Oxford University Press, New York, 1999.

\bibitem{Doina_Vanni_98}
{\sc D.~Cioranescu and M.~Vanninathan}, {\em Exact controllability in thin
  perforated domains}, in \'{E}quations aux d\'eriv\'ees partielles et
  applications, Gauthier-Villars, \'Ed. Sci. M\'ed. Elsevier, Paris, 1998,
  pp.~383--412.

\bibitem{Fabre}
{\sc C.~Fabre, J.-P. Puel, and E.~Zuazua}, {\em Approximate controllability of
  the semilinear heat equation}, Proc. Roy. Soc. Edinburgh Sect. A, 125 (1995),
  pp.~31--61.

\bibitem{fattorini_old_book}
{\sc H.O. Fattorini}, {\em Infinite Dimensional Optimization and Control
  Theory}, Cambridge Studies in Advanced Mathematics, Cambridge University
  Press, 1999.

\bibitem{Fattorini_old}
{\sc H.~O. Fattorini}, {\em Time-optimal control of solutions of operational
  differenital equations}, Journal of the Society for Industrial and Applied
  Mathematics Series A Control, 2 (1964), pp.~54--59.

\bibitem{Fatto_book}
\leavevmode\vrule height 2pt depth -1.6pt width 23pt, {\em Infinite Dimensional
  Linear Control Systems. The Time Optimal and Norm Optimal Control Problems},
  North-Holland Mathematics Studies, 201, Elsevier, Amsterdam, 2005.

\bibitem{Kun_Wang}
{\sc K.~Kunisch and L.~Wang}, {\em Time optimal control of the heat equation
  with pointwise control constraints}, ESAIM Control Optim. Calc. Var., 19
  (2013), pp.~460--485.

\bibitem{liyong}
{\sc X.J. Li and J.M. Yong}, {\em Optimal control theory for
  infinite-dimensional systems}, Systems \& Control: Foundations \&
  Applications, Birkh\"auser Boston Inc., Boston, MA, 1995.

\bibitem{Lin}
{\sc F.~H. Lin}, {\em A uniqueness theorem for parabolic equations}, Comm. Pure
  Appl. Math., 43 (1990), pp.~127--136.

\bibitem{Lop_Zuaz}
{\sc A.~L{\'o}pez and E.~Zuazua}, {\em Uniform null-controllability for the
  one-dimensional heat equation with rapidly oscillating periodic density},
  Ann. Inst. H. Poincar\'e Anal. Non Lin\'eaire, 19 (2002), pp.~543--580.

\bibitem{MRT}
{\sc S.~Micu, I.~Roventa, and M.~Tucsnak}, {\em Time optimal boundary controls
  for the heat equation}, J. Funct. Anal., 263 (2012), pp.~25--49.

\bibitem{Jeanine_Vanni_94}
{\sc J.~Saint~Jean Paulin and M.~Vanninathan}, {\em Exact controllability of
  vibrations of thin bodies}, Portugal. Math., 51 (1994), pp.~421--453.

\bibitem{PhWZh}
{\sc K.~D. Phung, G.~Wang, and X.~Zhang}, {\em On the existence of time optimal
  controls for linear evolution equations}, Discrete Contin. Dyn. Syst. Ser. B,
  8 (2007), pp.~925--941.

\bibitem{Jeanine_Vanni_97}
{\sc J.~Saint Jean~Paulin and M.~Vanninathan}, {\em Vibrations of thin elastic
  structures and exact controllability}, RAIRO Mod\'el. Math. Anal. Num\'er.,
  31 (1997), pp.~765--803.

\bibitem{Tebou}
{\sc L.~Tebou}, {\em Uniform null controllability of the heat equation with
  rapidly oscillating periodic density}, C. R. Math. Acad. Sci. Paris, 347
  (2009), pp.~779--784.

\bibitem{Fredi}
{\sc F.~Tr{\"o}ltzsch}, {\em On generalized bang-bang principles for two
  time-optimal heating problems with constraints on the control and the state},
  Demonstratio Math., 15 (1982), pp.~131--143.

\bibitem{obsbook}
{\sc M.~Tucsnak and G.~Weiss}, {\em Observation and control for operator
  semigroups}, Birkh\"auser Advanced Texts: Basler Lehrb\"ucher. [Birkh\"auser
  Advanced Texts: Basel Textbooks], Birkh\"auser Verlag, Basel, 2009.

\bibitem{wang_linf}
{\sc G.~Wang}, {\em {$\mathcal{L^\infty}$}-null controllability for the heat
  equation and its consequences for the time optimal control problem}, SIAM J.
  Control Optim., 47 (2008), pp.~1701--1720.

\bibitem{WangZuazua}
{\sc G.~Wang and E.~Zuazua}, {\em On the equivalence of minimal time and
  minimal norm controls for internally controlled heat equations}, SIAM J.
  Control Optim., 50 (2012), pp.~2938--2958.

\bibitem{Yu}
{\sc H.~Yu}, {\em Approximation of time optimal controls for heat equations
  with perturbations in the system potential}, SIAM J. Control and
  Optimization, 52 (2014), pp.~1663--1692.

\bibitem{Zuazua}
{\sc E.~Zuazua}, {\em Controllability and observability of partial differential
  equations: some results and open problems}, in Handbook of differential
  equations: evolutionary equations. {V}ol. {III}, Handb. Differ. Equ.,
  Elsevier/North-Holland, Amsterdam, 2007, pp.~527--621.

\end{thebibliography}

\end{document}